\documentclass[11pt,english]{scrartcl}
\usepackage[T1]{fontenc}
\usepackage[latin9]{inputenc}
\usepackage[a4paper]{geometry}
\geometry{verbose,tmargin=3cm,bmargin=3cm,lmargin=2.5cm,rmargin=2.5cm}
\usepackage{babel}
\usepackage{refstyle}
\usepackage{wrapfig}
\usepackage{amsthm}
\usepackage{amsmath}
\usepackage{amssymb}
\usepackage{esint}
\usepackage[authoryear]{natbib}
\usepackage{lmodern}
\usepackage{graphics}
\usepackage[kerning=true]{microtype}
\usepackage[unicode=true,pdfusetitle,
 bookmarks=true,bookmarksnumbered=false,bookmarksopen=false,
 breaklinks=false,pdfborder={0 0 0},backref=false,colorlinks=true]
 {hyperref}

\makeatletter

\theoremstyle{plain}
\newtheorem{thm}{Theorem}
\theoremstyle{definition}
\newtheorem{defn}[thm]{Definition}
\theoremstyle{remark}
\newtheorem{rem}[thm]{Remark}
\theoremstyle{plain}
\newtheorem{lem}[thm]{Lemma}
\theoremstyle{plain}
\newtheorem{prop}[thm]{Proposition}

\newcommand{\includefigure}[1]{\centering\includegraphics{figures/#1}}

\setcapindent{0pt}

\global\long\def\i{\mathrm{i}}
\global\long\def\e{\mathrm{e}}
\global\long\def\rd{\mathrm{d}}
\global\long\def\sign{\operatorname{sign}}
\global\long\def\bu{\boldsymbol{u}}
\global\long\def\bv{\boldsymbol{v}}
\global\long\def\bw{\boldsymbol{w}}
\global\long\def\bR{\boldsymbol{R}}
\global\long\def\bd{\boldsymbol{d}}
\global\long\def\br{\boldsymbol{r}}
\global\long\def\bq{\boldsymbol{q}}

\global\long\def\bx{\boldsymbol{x}}
\global\long\def\by{\boldsymbol{y}}

\global\long\def\be{\boldsymbol{e}}

\global\long\def\bR{\boldsymbol{R}}

\global\long\def\bn{\boldsymbol{n}}
\global\long\def\ba{\boldsymbol{a}}

\global\long\def\bphi{\boldsymbol{\varphi}}
\global\long\def\bnabla{\boldsymbol{\nabla}}
\global\long\def\bcdot{\boldsymbol{\cdot}}

\global\long\def\bzero{\boldsymbol{0}}
\global\long\def\JH{\textsc{jh}}

\begin{document}

\title{On the stationary Navier-Stokes\\
equations in the half-plane}

\author{\href{mailto:julien.guillod@unige.ch}{Julien Guillod} and \href{mailto:peter.wittwer@unige.ch}{Peter Wittwer}\\
{\small{Department of Theoretical Physics,}}\\
{\small{University of Geneva, Switzerland}}}

\maketitle
\begin{abstract}
We consider the stationary incompressible Navier-Stokes equation in
the half-plane with inhomogeneous boundary condition. We prove existence
of strong solutions for boundary data close to any Jeffery-Hamel solution
with small flux evaluated on the boundary. The perturbation of the
Jeffery-Hamel solution on the boundary has to satisfy a nonlinear
compatibility condition which corresponds to the integral of the velocity
field on the boundary. The first component of this integral is the flux
which is an invariant quantity, but the second, called the asymmetry,
is not invariant, which leads to one compatibility condition. Finally,
we prove existence of weak solutions, as well as weak-strong uniqueness
for small data.
\end{abstract}
\textit{\small{Keywords:}}{\small{ Navier-Stokes equations, Flow-structure
interactions, Jeffery-Hamel flow}}\\
\textit{\small{MSC class:}}{\small{ 76D03, 76D05, 35Q30, 76D25, 74F10,
76M10}}{\small \par}

\section{Introduction\label{sec:introduction}}

The stationary and incompressible Navier-Stokes equations in the half-plane\index{Half-plane}
\[
\Omega=\left\{ (x,y)\in\mathbb{R}^{2}\colon y>1\right\} 
\]
are
\begin{equation}
\begin{aligned}\Delta\bu-\bnabla p & =\bu\bcdot\bnabla\bu\,, & \bnabla\bcdot\bu & =0\,,\\
\left.\bu\right|_{\partial\Omega} & =\bu^{*}\,, & \lim_{|\bx|\to\infty}\bu & =\bzero\,,
\end{aligned}
\label{eq:ns}
\end{equation}
where $\bu^{*}$ is a boundary condition. Due to the incompressibility
of the fluid, the flux is an invariant quantity,
\[
\Phi=\int_{\partial\Omega}\bu^{*}\bcdot\bn=\int_{\mathbb{R}}v(x,y)\,\rd x\,,
\]
for all $y\geq1$, where $\bu=(u,v)$ and $\bn=(0,1)$ is the normal
vector to the half-plane. This problem (see \figref{scheme}a)
presents three difficulties: $\Omega$ is a two-dimensional unbounded
domain, the boundary of $\partial\Omega$ is unbounded, and the boundary
data are not zero. There is not much previous work on this problem,
but some authors have treated related problems. Concerning the half-plane
problem, \citet[\S 5]{Heywood-uniquenessquestions1976} proves the
uniqueness of solutions for the steady Stokes equation and the time-dependent
Navier-Stokes equation. The so called Leray's problem, which consists
of a finite number of outlets connected to a compact domain, has been
studied in detail by \citet{Amick-SteadysolutionsNavier1977,Amick-PropertiessteadyNavier-Stokes1978,Amick-SteadysolutionsNavier1980}
and several other authors, but the resolvability for large fluxes
is still an open problem. \citet{Fraenkel-LaminarFlow1962,Fraenkel-LaminarFlow1962b}
provides a formal asymptotic expansion of the stream function in case
of a curved channel by starting with the Jeffery-Hamel solution \citep{Jeffery-two-dimensionalsteadymotion1915,Hamel-SpiralfoermigeBewegungen1917}
for the first order. The case of paraboloidal outlets was first treated
by \citet{Nazarov-Asymptoticssolutions1998}, and then more recently
by \citet{Kaulakyte-NonhomogeneousBoundaryValue,Kaulakyte-NonhomogeneousBoundaryValue2013}.
Another important class of similar problems are the aperture domains,
introduced by \citet{Heywood-uniquenessquestions1976}, as shown in
\figref{scheme}b. The linear approximation was studied
in any dimension by \citet{Farwig-Notefluxcondition1996,Farwig-Helmholtzdecomposition1996}.
The three-dimensional case was treated by \citet{Borchers-Existenceuniqueness1992},
as well as other authors. For the two-dimensional nonlinear problem,
\citet{Galdi-Existenceandasymptotic1995} proved that the velocity
tends to zero in the $L^{2}$-norm for arbitrary values of the flux.
For small fluxes, \citet{Galdi-Existenceuniquenessand1996} and \citet{Nazarov-two-dimensionalapertureproblem1996}
show that the asymptotic behavior is given by a Jeffery-Hamel solution
but only if the problem is symmetry with respect to the $y$-axis.
The asymptotic behavior of the two-dimensional aperture problem in
the nonsymmetric case is still open. Finally, \citet{Nazarov.etal-Steadyflowsof2001,Nazarov.etal-Steadyflowsof2002}
considered a straight channel connected to a half-plane (see \figref{scheme}c),
and looked under which conditions the asymptotic behavior is given
by a Jeffery-Hamel flow in the half-plane and by the Poiseuille flow
in the channel. Theses conditions are described in details later on.
On a more applied side, the bifurcation properties and the stability
of the Jeffery-Hamel flows have retained the attention of many authors
\citep{Moffatt-Localsimilaritysolutions1980,Sobey-Bifurcationsoftwo-dimensional1986,Banks.etal-perturbationsofJeffery-Hamel1988,Uribe-stabilityJefferyHamel1997,Drazin-Flowthroughdiverging1999}.
\begin{figure}
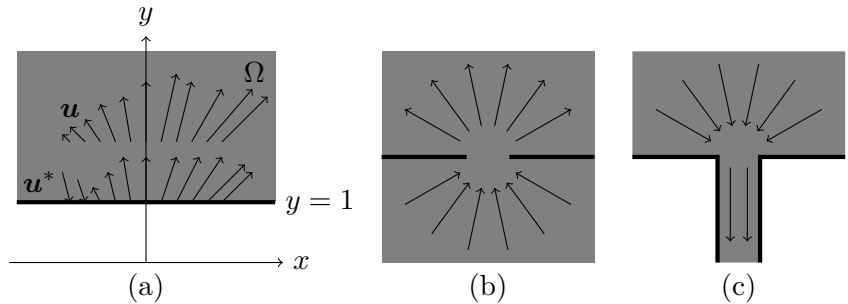

\includefigure{scheme}

\caption{\label{fig:scheme} (a) The domain $\Omega$ we consider
is the half plane defined by $x\in\mathbb{R}$ and $y>1$; (b) an
aperture domain; (c) a channel connected to a half-plane.}
\end{figure}

Jeffery-Hamel flows play an important role in the asymptotic behavior
of flows carrying flux. They own their name to the work of \citet{Jeffery-two-dimensionalsteadymotion1915,Hamel-SpiralfoermigeBewegungen1917},
and are radial scale invariant solutions of the two-dimensional stationary
incompressible Navier-Stokes equations
\begin{align*}
\Delta\bu-\left(\bu\bcdot\bnabla\right)\bu-\bnabla p & =\bzero\,, & \bnabla\bcdot\bu & =0\,,
\end{align*}
in domains
\[
D=\left\{ \left(r\sin(\theta),r\cos(\theta)\right)\in\mathbb{R}^{2}\colon\, r>0\text{ and }\theta\in\left(-\beta,\beta\right)\right\} 
\]
with $\beta\in\bigl(0,\tfrac{\pi}{2}\bigr]$, satisfying the boundary
condition
\[
\bu\bigr|_{\partial D\setminus\left\{ \bzero\right\} }=\bzero\,.
\]
Explicitly, a Jeffery-Hamel\index{Jeffery-Hamel solutions}\index{Exact solutions!Jeffery-Hamel}
solution $\bu_{\JH}$ is of the form
\[
\bu_{\JH}(r,\theta)=\dfrac{1}{r}f(\theta)\,\be_{r}\,,
\]
with $f$ a solution of the nonlinear second order ordinary differential
equation
\[
f^{\prime\prime}+f^{2}+4f=2C\,,
\]
with $C\in\mathbb{R}$, satisfying the boundary condition $f(\pm\beta)=0$.
The constant $C$ is related to the flux $\Phi$ of the flow,
\[
\Phi=\int_{-\beta}^{+\beta}f(\theta)\,\rd\theta\,.
\]

The Jeffery-Hamel solutions have been intensively studied \citep{Rosenhead-SteadyTwo-DimensionalRadial1940,Fraenkel-LaminarFlow1962,Banks.etal-perturbationsofJeffery-Hamel1988,Tutty-Nonlineardevelopmentof1996},
and, because some mathematical questions still remain open, there
has been a regain of interest in recent years \citep{Rivkind.Solonnikov-JefferyHamelAsymptoticsSteady2000,Kerswell.etal-Steadynonlinearwaves2004,Putkaradze.Vorobieff-InstabilitiesBifurcations2006,Corless.Assefa-Jeffery-hamelflowwith2007}.

In what follows we are interested in the half-plane case, so we consider
$\beta=\tfrac{\pi}{2}$, \emph{i.e.}, when the domain is the upper
half plane $D=\mathbb{R}\times\left(0,\infty\right)$. In Cartesian
coordinates, the two components of the velocity of the Jeffery-Hamel
solutions are
\begin{align}
u_{\JH}(x,y) & =\frac{1}{y}f_{u}\left(\frac{x}{y}\right)\,, & v_{\JH}(x,y)= & \frac{1}{y}f_{v}\left(\frac{x}{y}\right)\,,\label{eq:JH-xy}
\end{align}
with
\begin{align*}
f_{u}(s) & =\frac{s\, f(\arctan s)}{1+s^{2}}\,, & f_{v}(s) & =\frac{f(\arctan s)}{1+s^{2}}\,.
\end{align*}
\begin{wrapfigure}[16]{r}{55mm}
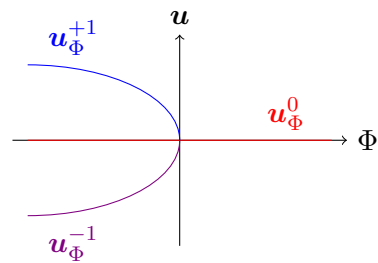
%
\vspace{-8pt}\includefigure{bifurcation}

\caption{\label{fig:bifurcation-JH}Existence of the Jeffery-Hamel
flows for small values of the flux $\Phi$. For $\Phi>0$, there exists
one symmetric solutions, but for $\Phi<0$, also two additional asymmetric
solutions exist.}
\end{wrapfigure}%
The Jeffery-Hamel solutions for $\beta=\tfrac{\pi}{2}$ have a peculiar
property: for small $\Phi<0$ there is more than one solution. In
fact, as shown in appendix~\S\ref{sec:JH-phi}, $\Phi=0$ is a tri-critical
bifurcation point, see \figref{bifurcation-JH}. For small
$\Phi>0$ the Jeffery-Hamel problem has a solution $\bu_{\Phi}^{0}$
which is symmetric with respect to the $y$-axis. The solution $\bu_{\Phi}^{0}$
also exists for small values of $\Phi<0$, but when crossing from
$\Phi>0$ to $\Phi<0$, an additional pair $\bu_{\Phi}^{\pm1}$ of
asymmetric solutions (related to each other by a reflection with respect
to the $y$-axis) appears. For $\Phi=0$, the Jeffery-Hamel solution
is the zero function, and will be ignored in what follows.

The central idea of the method we use to study \eqref{ns},
is to interpret the system as an evolution equation with $y$ playing
the role of time. The boundary data of the original problem then become
the initial data for the resulting Cauchy problem. This allows discussing
the ``time\textquotedblright{} dependence of quantities like the
flux $\Phi=\int_{\mathbb{R}}v(x,y)\mathrm{d}x$ and the asymmetry
$A=\int_{\mathbb{R}}u(x,y)\mathrm{d}x$ in a natural setting. We assume
for the moment sufficient decay for these integrals to make sense.
As can be seen from \eqref{JH-xy}, the flux and the asymmetry
are invariants for a Jeffery-Hamel solution $\bu_{\Phi}^{\sigma}$,\textit{
i.e.}, they are independent of the time $y$, and therefore\index{Flux}\index{Asymmetry}
\begin{align*}
A & =\int_{\mathbb{R}}u_{\Phi}^{\sigma}(x,1)\,\rd x=\int_{\mathbb{R}}u_{\Phi}^{\sigma}(x,y)\,\rd x=\lim_{y\rightarrow\infty}\int_{\mathbb{R}}u_{\Phi}^{\sigma}(x,y)\,\rd x\,,\\
\Phi & =\int_{\mathbb{R}}v_{\Phi}^{\sigma}(x,1)\,\rd x=\int_{\mathbb{R}}v_{\Phi}^{\sigma}(x,y)\,\rd x=\lim_{y\rightarrow\infty}\int_{\mathbb{R}}v_{\Phi}^{\sigma}(x,y)\,\rd x\,.
\end{align*}
The flux is an invariant of the Navier-Stokes equation \eqref{ns},
\emph{i.e.}, if $\bu$ is a solution of the Navier-Stokes equation,
then for $y>1$,

\[
\int_{\mathbb{R}}v(x,1)\,\rd x=\int_{\mathbb{R}}v(x,y)\,\rd x=\lim_{y\rightarrow\infty}\int_{\mathbb{R}}v(x,y)\,\rd x\,,
\]
but the asymmetry is not an invariant, so typically, 
\[
\int_{\mathbb{R}}u(x,1)\,\rd x\neq\int_{\mathbb{R}}u(x,y)\,\rd x\neq\lim_{y\rightarrow\infty}\int_{\mathbb{R}}u(x,y)\,\rd x\,.
\]
As we will see below, for solutions that are close but not equal to
Jeffery-Hamel, the asymmetry is no more an invariant, and this fact
is the main source of trouble for the construction of solutions.

Jeffery-Hamel solutions are singular at the origin, and, in order
to study this phenomenon, \emph{i.e.} look for solutions which are
close to Jeffery-Hamel flows, it is necessary to regularize the problem.
For this purpose, given a Jeffery-Hamel solution $\bu_{\Phi}^{\sigma}$
in the upper half plane $D=\left\{ (x,y)\in\mathbb{R}^{2}\colon y>0\right\} $,
we restrict it to the domain $\Omega=\left\{ (x,y)\in\mathbb{R}^{2}\colon y>1\right\} $,
and construct stationary solutions of the Navier-Stokes equations
which are close to the Jeffery-Hamel flow by imposing boundary conditions
of the form 
\begin{equation}
\bu\bigr|_{\partial\Omega}=\bu_{\Phi}^{\sigma}\bigr|_{\partial\Omega}+\bu_{b}\,,\label{eq:BC-ub}
\end{equation}
with $\bu_{b}$ small and with zero flux,
\[
\int_{\partial\Omega}\bu_{b}\bcdot\bn=0\,.
\]
Even when considering such boundary conditions, we are not able to
perform a fixed point argument on the nonlinearity by inverting the
Stokes problem. The main reason is that the flux is determined by
the boundary condition, while the asymmetry is not. In order to adjust
the asymmetry, we rewrite the boundary condition as
\begin{equation}
\left.\bu\right\vert _{\partial\Omega}=\left.\bu_{\Phi}^{\sigma}\right\vert _{\partial\Omega}+\left(A,0\right)\frac{1}{\sqrt{2\pi}}\e^{-\frac{1}{2}x^{2}}+\bu_{s}\,,\label{eq:BC-ur}
\end{equation}
where
\[
A=\int_{\mathbb{R}}u_{b}\,,
\]
so that $\bu_{s}$ has no asymmetry and no flux.
\[
\int_{\mathbb{R}}\bu_{s}=\bzero\,.
\]
The choice of $\e^{-\frac{1}{2}x^{2}}$ in \eqref{BC-ur}
is for convenience later on, and we could have chosen instead any
other smooth function of rapid decay.

We will show the existence of strong solutions to the Navier-Stokes
equation \eqref{ns}, with the boundary condition \eqref{BC-ur}
for $\Phi$ small and $\bu_{s}$ in a small ball by adjusting the
parameter $A$. The main result is the following:
\begin{thm}
For all boundary condition of the form \eqref{BC-ur} with
$\bu_{\Phi}^{\sigma}$ a Jeffery-Hamel solution with small enough
flux $\Phi$ and all $\bu_{s}$ in a small enough neighborhood of
zero in some function space, there exists a solution $(\bu,p)$ of
the Navier-Stokes equation in $\Omega$, satisfying\index{Navier-Stokes equations!in the half-plane!asymptotic behavior}\index{Asymptotic behavior!in the half-plane!Navier-Stokes solutions!strong
solutions}
\[
\lim_{y\rightarrow\infty}y\left(\sup_{x\in\mathbb{R}}\bigl|\bu-\bu_{\Phi}^{\sigma}\bigr|\right)=0\,,
\]
and
\begin{align}
\begin{aligned}\bnabla\bu & \in L^{2}(\Omega)\,,\\
\bu/y & \in L^{2}(\Omega)\,,
\end{aligned}
 &  & \begin{aligned}y\bu & \in L^{\infty}(\Omega)\,,\\
y^{2}\bnabla\bu & \in L^{\infty}(\Omega)\,.
\end{aligned}
\label{eq:bounds-strong}
\end{align}
Moreover, if $\bv$ is a weak solution (defined in \eqref{weak-sol})
of \eqref{ns} and $\bu$ a strong solution of \eqref{ns}
satisfying \eqref{bounds-strong}, then $\bu=\bv$, provided
$\bu^{*}$ is small enough.
\end{thm}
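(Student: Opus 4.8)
\emph{Sketch of the argument.} The plan is to treat \eqref{ns} as an evolution problem in the variable $y$, as announced in the introduction. Writing $\bu=\bu_\Phi^\sigma+\bw$ and $p=p_\Phi^\sigma+q$ and subtracting the equations satisfied by the Jeffery--Hamel flow, one obtains the perturbation system
\[
\Delta\bw-\bnabla q=\bu_\Phi^\sigma\bcdot\bnabla\bw+\bw\bcdot\bnabla\bu_\Phi^\sigma+\bw\bcdot\bnabla\bw\,,\qquad\bnabla\bcdot\bw=\bzero\,,
\]
with $\bw|_{\partial\Omega}=(A,0)\tfrac{1}{\sqrt{2\pi}}\e^{-\frac12 x^2}+\bu_s$ as in \eqref{BC-ur}, which has zero flux, and $\bw\to\bzero$ at infinity. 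After a Fourier transform in $x$ this is, for each wave number, a linear ODE system in $y$; eliminating $q$ by incompressibility and integrating against the associated Green's function turns it into an integral equation $\bw=\mathcal{S}_A(\bu_s)+\mathcal{Q}(\bw,\bw)+\mathcal{R}(\bw)$, with $\mathcal{S}_A$ linear in the data (the parameter $A$ entering through the boundary datum), $\mathcal{Q}$ bilinear, and $\mathcal{R}$ a linear remainder with coefficients controlled by $\Phi$. I would set this up in a Banach space $X$ whose norm encodes the scaled pointwise decay $|\bw(x,y)|\leq\tfrac{C}{y}\,\varphi(x/y)$ with $\varphi$ of rapid decay, together with matching bounds on $\bnabla\bw$; this is exactly the behavior demanded by \eqref{bounds-strong}.

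The real obstruction — the one flagged in the introduction — is that the linear solution operator, restricted to the decay class $X$, is not onto: it has a one-dimensional deficiency whose dual functional is the asymmetry $\int_\mathbb{R}u\,\rd x$, which is not conserved by \eqref{ns}. I would make this quantitative by reading off, in Fourier variables, the solvability condition of the linear problem: a solution in $X$ exists if and only if a scalar functional $\ell$ of the data and the forcing vanishes, and $\partial_A\ell\neq0$ for $\Phi$ small. The construction then becomes a coupled fixed point for $(\bw,A)$ in a small ball of $X$ times a small interval: given $(\bw,A)$, solve the corrected linear problem to get $\bw'$ and update $A'=A-\ell/\partial_A\ell$ so as to drive the deficiency to zero. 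The boundedness of $\mathcal{S}_A$, the bilinear estimate $\lVert\mathcal{Q}(\bw_1,\bw_2)\rVert_X\lesssim\lVert\bw_1\rVert_X\lVert\bw_2\rVert_X$, and the smallness of $\Phi$ and $\bu_s$ make this map a contraction, and its fixed point is a strong solution. I expect the Fredholm-type analysis of the linearized half-plane operator in weighted $(k,y)$ spaces — and, more mundanely, choosing the norms so that every linear and bilinear estimate closes — to be the main difficulty. The refined asymptotics $\lim_{y\to\infty}y\sup_x|\bu-\bu_\Phi^\sigma|=0$ then comes from a bootstrap in the integral representation, using that the perturbation is flux-free so that no scale-invariant $1/y$ contribution survives; the bounds \eqref{bounds-strong} follow by integrating the scaled pointwise estimates carried by $X$.

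For the weak--strong uniqueness, let $\bu$ be the strong solution just constructed and $\bv$ a weak solution of \eqref{ns} in the sense of \eqref{weak-sol}, and set $\bw=\bv-\bu$; then $\bw$ is divergence free, vanishes on $\partial\Omega$, satisfies $\bnabla\bw\in L^2(\Omega)$, and decays at infinity. Subtracting the two copies of \eqref{ns} gives
\[
\Delta\bw-\bnabla q=\bu\bcdot\bnabla\bw+\bw\bcdot\bnabla\bw+\bw\bcdot\bnabla\bu\,.
\]
Using $\bw$ as a test function — which I would justify by truncating in $x$ and $y$ and passing to the limit, the boundary contributions vanishing thanks to $\bnabla\bw\in L^2$, a Hardy inequality and the decay of $\bw$ — the pressure term drops out, $\int_\Omega(\bu\bcdot\bnabla\bw)\bcdot\bw=\int_\Omega(\bw\bcdot\bnabla\bw)\bcdot\bw=0$ by incompressibility together with $\bw|_{\partial\Omega}=\bzero$, and one is left with
\[
\int_\Omega|\bnabla\bw|^2=-\int_\Omega(\bw\bcdot\bnabla\bu)\bcdot\bw\leq\bigl\|y^2\bnabla\bu\bigr\|_{L^\infty(\Omega)}\int_\Omega\frac{|\bw|^2}{y^2}\leq C\bigl\|y^2\bnabla\bu\bigr\|_{L^\infty(\Omega)}\int_\Omega|\bnabla\bw|^2\,,
\]
where the last inequality is the Hardy inequality $\int_\Omega|\bw|^2/y^2\leq C\int_\Omega|\bnabla\bw|^2$, valid since $\bw$ vanishes on $\{y=1\}$. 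When $\bu^*$ is small the strong solution is close to a Jeffery--Hamel flow of small flux and, by \eqref{bounds-strong}, $\bigl\|y^2\bnabla\bu\bigr\|_{L^\infty(\Omega)}$ is small, so $C\bigl\|y^2\bnabla\bu\bigr\|_{L^\infty(\Omega)}<1$ and therefore $\bnabla\bw=\bzero$; with the homogeneous boundary condition this forces $\bw=\bzero$, i.e. $\bu=\bv$. The only delicate point in this last step is the admissibility of $\bw$ as a test function and the precise decay needed to kill the boundary terms at spatial infinity, for which the quantitative bounds on $\bu$ are used.
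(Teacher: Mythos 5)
Your existence argument follows essentially the same route as the paper: Fourier transform in $x$, the Stokes system as an evolution in $y$ solved by an explicit Green's function, weighted spaces built on the self-similar variable $|k|y$ (equivalently $x/y$), identification of the solvability obstruction at $k=0$ with the asymmetry, and a contraction in which the parameter $A$ is updated to kill that obstruction (the paper simply sets $A=-\int_{1}^{\infty}\hat{R}(0,y)\,\rd y$ inside the fixed-point map, which is your Newton step in disguise). One small imprecision: the second component of the zero-frequency compatibility condition, $\hat{v}^{*}(0)=0$, is also needed but is automatic from the zero-flux hypothesis on the perturbation, so the deficiency is indeed effectively one-dimensional; and the perturbation actually gains decay ($y^{-q}$ with $q>1$ rather than $y^{-1}$), which is what yields $\lim_{y\to\infty}y\sup_{x}|\bu-\bu_{\Phi}^{\sigma}|=0$.

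The uniqueness part has a genuine gap. You subtract the two equations and test with $\bw=\bv-\bu$, but $\bv$ is only a weak solution in the sense of \eqref{weak-sol}, i.e.\ its equation holds against smooth, compactly supported, divergence-free test functions. To test it against $\bw$, which contains $\bv$ itself, you must pass to the limit along approximations $\bphi_{n}\to\bw$ in $D_{0,\sigma}^{1,2}(\Omega)$, and the nonlinear term does not survive this limit: the natural bound is
\[
\Bigl|\int_{\Omega}\left(\bv\bcdot\bnabla\bv\right)\bcdot\left(\bphi_{n}-\bw\right)\Bigr|\leq\left\Vert \bv/y\right\Vert _{2}\left\Vert \bnabla\bv\right\Vert _{2}\left\Vert y\left(\bphi_{n}-\bw\right)\right\Vert _{\infty}\,,
\]
and convergence in the Dirichlet norm controls neither $\left\Vert y\,\bcdot\right\Vert _{\infty}$ nor an $L^{4}(\Omega)$ norm on the unbounded half-plane (no global Ladyzhenskaya inequality is available here). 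This is exactly the classical weak--strong uniqueness obstruction, and "truncating and passing to the limit" does not resolve it. The paper circumvents it by (i) \emph{assuming} that the weak solution satisfies the energy inequality \eqref{energy-inequality} (Theorem~\ref{thm:uniqueness}); (ii) proving that the class of admissible test functions for the \emph{weak} solution's equation can be enlarged only to fields $\bar{\bv}$ with $y\bar{\bv}\in L^{\infty}$ (Lemma~\ref{lem:vbar-in-u}) --- which the strong solution's part satisfies, but $\bv$ itself need not; and (iii) an integration-by-parts lemma (Lemma~\ref{lem:int-by-parts}) whose hypotheses are met only because $y\bar{\bu}$ and $y^{2}\bnabla\bar{\bu}$ are bounded for the strong solution. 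Your energy identity $\int_{\Omega}|\bnabla\bw|^{2}=-\int_{\Omega}(\bw\bcdot\bnabla\bu)\bcdot\bw$ should therefore be replaced by an inequality obtained from this asymmetric combination of the weak solution's energy inequality and the strong solution's energy equality; the final absorption step via Hardy's inequality and smallness of $\left\Vert y\bar{\bu}\right\Vert _{\infty}$ (or of $\left\Vert y^{2}\bnabla\bu\right\Vert _{\infty}$, as you use) is then correct.
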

We now discuss more precisely the results of \citet{Nazarov.etal-Steadyflowsof2001,Nazarov.etal-Steadyflowsof2002},
for the domain shown in \figref{scheme}c. We note that
in this domain, the flux through the channel is not prescribed. They
show that by requiring the asymptotic behavior to be an antisymmetric
Jeffery-Hamel solution $\bu_{\Phi}^{\pm1}$, there exists a unique
solution in some weighted space, and the flux is uniquely determined
by the data. Conversely, by requiring that the asymptotic behavior
is given by a symmetric Jeffery-Hamel solution $\bu_{\Phi}^{0}$,
the Navier-Stokes equation linearized around $\bu_{\Phi}^{0}$ leads
to a well-posed problem for $\Phi<0$ and to an ill-posed one for
$\Phi>0$. So for $\Phi<0$, there exists a unique solution for all
small enough fluxes, but for $\Phi>0$, the asymptotic behavior is
still unknown. So we believe that in case $\Phi<0$, the Navier-Stokes
equation in the half-plane \eqref{ns} has a solution decaying
like $r^{-1}$ at infinity whose asymptote is given by a Jeffery-Hamel
solution, but in the case $\Phi>0$, it is still not clear that the
solution is in general bounded by $r^{-1}$.

The remainder of this paper is organized as follows. In \secref{halplane-spaces},
we introduce the function spaces which we use for the mathematical
formulation of the problem, and prove some basic bounds. In \secref{stokes},
we rewrite the Stokes equation as a dynamical system, present the
associated integral equations, and provide bounds on the solution
of the Stokes system, so that in \secref{ns}, we can show
the existence of strong solutions to the Navier-Stokes system. In
\secref{weaksol}, we prove existence of weak solutions,
and, finally, in \secref{uniqueness}, we prove uniqueness
of solutions for small data with a weak-strong uniqueness result.
In the last part, we also present numerical simulations that show
that the asymptotic behavior is most likely not given by the Jeffery-Hamel
solution if $\Phi>0$. In the appendix, we show the existence of symmetric
and asymmetric Jeffery-Hamel solutions with small flux.

\section{Function spaces\label{sec:halplane-spaces}}

As explained in the introduction, our strategy of proof is to rewrite
\eqref{ns} as a dynamical system with $y$ playing the
role of time. This system is studied by taking the Fourier transform
in the variable $x$, which transforms the system into a set of ordinary
differential equations with respect to $y$. We now define the function
spaces for the Fourier transforms of the velocity field, pressure
field and the nonlinearity. The choice of spaces is motivated by the
scaling property of the equations with respect to $x$ and $y$ when
linearized around a Jeffery-Hamel solution. This setup turns out to
be natural for the description of the asymptotic behavior of solutions
close to Jeffery-Hamel flows. Similar function spaces were already
used by \citet{Wittwer-structureofStationary2002}, where the basic
operations which are needed for the discussion of the Navier-Stokes
equations were discussed. In particular, \citet{Wittwer-structureofStationary2002}
shows basic bounds on the convolution with respect to the variable
$k$, the Fourier conjugate variable of $x$, which is needed to implement
the nonlinearities, and bounds on the convolution with the semigroup
$\e^{-\left|k\right|y}$ which is associated with the Stokes operator
when viewed as a time evolution in $y$. Further properties and improved
bounds have been proved by \citet{Hillairet.Wittwer-Existenceofstationary2009,Boeckle.Wittwer-Decayestimatessolutions2012}.
\begin{defn}[Fourier transform and convolution]
\label{def:fourier-convolution}For two functions $\hat{f}$
and $\hat{g}$ defined almost everywhere in $\Omega$ and which are
in $L^{1}(\mathbb{R})$ for all $y\geq1$, the inverse Fourier transform
of $\hat{f}$ is defined by
\[
f(x,y)=\mathcal{F}\bigl[\hat{f}\,\bigr](x,y)=\int_{\mathbb{R}}\e^{\i kx}\hat{f}(k,y)\,\rd k\,,
\]
and the convolution by
\[
\bigl(\hat{f}*\hat{g}\bigr)(k,y)=\int_{\mathbb{R}}\hat{f}(k-\ell,y)\hat{g}(\ell,y)\,\rd\ell\,.
\]

\end{defn}
We note that with these definitions,
\[
fg=\mathcal{F}\bigl[\hat{f}*\hat{g}\bigr]\,.
\]

We now define two families of function spaces: the first one is for
functions of $k$ only which will be used for the boundary data, and
the second one is for functions of $k$ and $y$:
\begin{defn}[function spaces on $\partial\Omega$]
\label{def:function-spaces-boundary}For $\alpha\geq0$
and $q\in\mathbb{R}$, let $\mathcal{A}_{\alpha,q}$ be the Banach
space of functions $\hat{f}\in C(X,\mathbb{C})$ where
\begin{equation}
X=\begin{cases}
\mathbb{R}\,, & q\geq0\,,\\
\mathbb{R}\setminus\{0\}\,, & q<0\,,
\end{cases}\label{eq:defX}
\end{equation}
such that $\overline{\hat{f}(k)}=\hat{f}(-k)$ and such that the norm\index{Space!A@$\mathcal{A}_{\alpha,q}$}
\[
\bigl\Vert\hat{f};\mathcal{A}_{\alpha,q}\bigr\Vert=\sup_{X}\frac{\bigl|\hat{f}\bigr|}{\eta_{\alpha,q}}\qquad\text{with}\qquad\eta_{\alpha,q}(k)=\frac{\left|k\right|^{q}}{1+\left|k\right|^{\alpha+q}}\,,
\]
is finite. For $\alpha\geq0$ and $q\geq0$, let $\mathcal{T}_{\alpha,q}$
and $\mathcal{W}_{\alpha,q}$, be the Banach space of functions in
$\mathcal{A}_{\alpha+1,\min(0,q-1)}$ such that their respective norm\index{Space!T@$\mathcal{T}_{\alpha,q}$}\index{Space!WW@$\mathcal{W}_{\alpha,q}$}
\begin{align*}
\bigl\Vert\hat{f};\mathcal{T}_{\alpha,q}\bigr\Vert & =\sum_{i=0}^{\lfloor q\rfloor}\bigl\Vert\partial_{k}^{i}\hat{f};\mathcal{A}_{\alpha+1,\min(0,q-1-i)}\bigr\Vert\,, & \bigl\Vert\hat{f};\mathcal{W}_{\alpha,q}\bigr\Vert & =\sum_{i=0}^{\lfloor q\rfloor}\bigl\Vert\partial_{k}^{i}\hat{f};\mathcal{A}_{\alpha+1,q-1-i}\bigr\Vert\,,
\end{align*}
is finite, where $\lfloor q\rfloor$ denotes the integer part of $q$.\end{defn}
\begin{rem}
The parameter $\alpha$ captures the behavior of functions at infinity,
which corresponds to the regularity in $x$ in direct space. For example,
if $\hat{f}\in\mathcal{A}_{\alpha,0}$ for $\alpha>1$, then $\hat{f}\in L^{1}(\mathbb{R})$
and by the dominated convergence theorem, $f\in C(\mathbb{R})$. The
index $q$ characterizes the behavior near $k=0$: a function which
behaves like $\left|k\right|^{q}$ around zero is in the space $\mathcal{A}_{\alpha,q}$.
The space $\mathcal{T}_{\alpha,q}$ includes some characterization
of the derivative with respect to $k$, which is needed in order to
characterized the behavior near $k=0$ as shown in the next lemma.\end{rem}
\begin{lem}
\label{lem:spaces-TtoU}For all $\hat{f}\in\mathcal{T}_{\alpha,q}$,
the function
\[
\hat{g}(k)=\hat{f}(k)-\left(\sum_{i=0}^{\lfloor q\rfloor-1}\frac{k^{i}}{i!}\partial_{k}^{i}\hat{f}(0)\right)\chi(\left|k\right|)\,,
\]
where $\chi$ is a smooth cut-off function with
\begin{align*}
\chi\left(\left[0,1\right]\right) & =\left\{ 1\right\} \,, & \chi\left(\left[2,\infty\right)\right) & =\left\{ 0\right\} \,,
\end{align*}
satisfies $\hat{g}\in\mathcal{W}_{\alpha,p}$ and therefore
\[
\mathcal{W}_{\alpha,q}=\left\{ \hat{f}\in\mathcal{T}_{\alpha,q}\colon\hat{f}^{(i)}(0)=0,\,\forall i\leq\lfloor q\rfloor-1\right\} \,.
\]
\end{lem}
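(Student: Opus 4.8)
The plan is to split every estimate according to whether $|k|$ is small or large, using that $\chi\equiv 1$ on $[0,1]$ and $\chi\equiv 0$ on $[2,\infty)$: on $[-1,1]$ one has $\hat g=\hat f-P$ with $P(k)=\sum_{i=0}^{\lfloor q\rfloor-1}\frac{k^{i}}{i!}\partial_{k}^{i}\hat f(0)$, which is precisely the order $\lfloor q\rfloor-1$ Taylor remainder of $\hat f$ at the origin, whereas on $\{|k|\geq 2\}$ one has $\hat g=\hat f$. Before that I would record the elementary continuous embedding $\mathcal{A}_{\alpha+1,s}\hookrightarrow\mathcal{A}_{\alpha+1,\min(0,s)}$, valid because for $s\geq 0$ the ratio $\eta_{\alpha+1,s}/\eta_{\alpha+1,0}$ is bounded on $X$ (it is comparable to $|k|^{s}$ near $0$ and to $1$ near infinity). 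Summing over $i$, this gives $\mathcal{W}_{\alpha,q}\hookrightarrow\mathcal{T}_{\alpha,q}$, and it also shows that every $\hat f\in\mathcal{T}_{\alpha,q}$ is $\lfloor q\rfloor$ times continuously differentiable with $\partial_{k}^{i}\hat f$ bounded near $0$ for $i\leq\lfloor q\rfloor-1$, so that all the quantities $\partial_{k}^{i}\hat f(0)$ appearing in the statement are meaningful.

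The heart of the argument is the bound near $k=0$. On $[-1,1]$ one writes the remainder in integral form,
\[
\hat g(k)=\int_{0}^{k}\frac{(k-t)^{\lfloor q\rfloor-1}}{(\lfloor q\rfloor-1)!}\,\partial_{k}^{\lfloor q\rfloor}\hat f(t)\,\rd t\,,
\]
and differentiates $i$ times under the integral sign for $i\leq\lfloor q\rfloor-1$. Inserting the pointwise bound $\bigl|\partial_{k}^{\lfloor q\rfloor}\hat f(t)\bigr|\lesssim\bigl\Vert\hat f;\mathcal{T}_{\alpha,q}\bigr\Vert\,|t|^{q-1-\lfloor q\rfloor}$ near $0$, which is the $i=\lfloor q\rfloor$ term of the $\mathcal{T}_{\alpha,q}$ norm, one obtains
\[
\bigl|\partial_{k}^{i}\hat g(k)\bigr|\lesssim|k|^{\lfloor q\rfloor-1-i}\int_{0}^{|k|}t^{q-1-\lfloor q\rfloor}\,\rd t\lesssim|k|^{q-1-i}\,,
\]
which up to constants equals the weight $\eta_{\alpha+1,q-1-i}(k)$ on $[-1,1]$; for $i=\lfloor q\rfloor$ one simply uses $\partial_{k}^{\lfloor q\rfloor}\hat g=\partial_{k}^{\lfloor q\rfloor}\hat f$ on $[-1,1]$. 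When $q$ is an integer the exponent $q-1-\lfloor q\rfloor$ equals $-1$ and this integral diverges; in that case one subtracts instead the Taylor polynomial of degree $\lfloor q\rfloor-2$ and uses that $\partial_{k}^{\lfloor q\rfloor-1}\hat f$ is bounded near $0$, which reproduces the same exponents $q-1-i=\lfloor q\rfloor-1-i$.

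On $\{|k|\geq 1\}$ the term $P\chi$ is smooth and supported in $[-2,2]$, so its derivatives are bounded there by a constant multiple of any $\eta_{\alpha+1,s}$, and the required bounds on $\partial_{k}^{i}\hat g$ follow verbatim from $\hat f\in\mathcal{T}_{\alpha,q}$ since for $|k|\geq 1$ the weights $\eta_{\alpha+1,q-1-i}$ and $\eta_{\alpha+1,\min(0,q-1-i)}$ are comparable. Combining the two regions gives $\hat g\in\mathcal{W}_{\alpha,q}$ with $\bigl\Vert\hat g;\mathcal{W}_{\alpha,q}\bigr\Vert\lesssim\bigl\Vert\hat f;\mathcal{T}_{\alpha,q}\bigr\Vert$. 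The set identity then follows: the inclusion $\supseteq$ is immediate, since $\partial_{k}^{i}\hat f(0)=0$ for $i\leq\lfloor q\rfloor-1$ forces $P\equiv 0$ and hence $\hat g=\hat f$; for $\subseteq$, the embedding yields $\mathcal{W}_{\alpha,q}\subset\mathcal{T}_{\alpha,q}$, while the $i=0$ part of the $\mathcal{W}_{\alpha,q}$ norm forces $|\hat f(k)|\lesssim|k|^{q-1}$ near the origin, so a Taylor expansion of the $\lfloor q\rfloor$ times differentiable function $\hat f$ at $0$ forces $\partial_{k}^{i}\hat f(0)=0$ for all $i<q-1$, that is for $i\leq\lfloor q\rfloor-1$ when $q\notin\mathbb{Z}$.

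The step I expect to be the main obstacle is exactly this near-origin estimate: one has to verify that the largest admissible blow-up $|k|^{q-1-\lfloor q\rfloor}$ of the top derivative is integrable against the Taylor kernel (true precisely because $q-1-\lfloor q\rfloor>-1$, with the borderline integer case handled as above), and that after $i$ antiderivatives one recovers exactly the exponent $q-1-i$ rather than a weaker one. Everything else reduces to routine bookkeeping with the weights $\eta_{\alpha+1,\bullet}$ and the splitting $|k|\leq 1$ versus $|k|\geq 1$.
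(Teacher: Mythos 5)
Your proof is correct and follows essentially the same route as the paper: reduce to $|k|\leq 1$ via the support of $\chi$, write $\hat g$ as the integral-form Taylor remainder, and integrate the admissible singularity $|k|^{q-1-\lfloor q\rfloor}$ of the top derivative to recover the exponent $q-1-i$. You are in fact somewhat more careful than the paper, which silently assumes $q\notin\mathbb{N}$ (its integral also diverges at $i=\lfloor q\rfloor-1$ for integer $q$) and does not spell out the reverse inclusion in the set identity.
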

\begin{proof}
Due to the fact that the behavior at large $\left|k\right|$ of functions
in $\mathcal{T}_{\alpha,q}$ and in $\mathcal{W}_{\alpha,q}$ are
the same we only need to prove the behavior for small $\left|k\right|$.
In view of the properties of the cut-off function, for $\left|k\right|\leq1$
and $i\leq\lfloor q\rfloor-1$, we have
\begin{align*}
\left|\partial_{k}^{i}\hat{g}(k)\right| & =\left|\partial_{k}^{i}\hat{f}(k)-\left(\sum_{j=0}^{\lfloor q\rfloor-1-i}\frac{k^{j}}{j!}\partial_{k}^{j+i}\hat{f}(0)\right)\chi(\left|k\right|)\right|\\
 & \leq\int_{0}^{\left|k\right|}\left|\partial_{k}^{\lfloor q\rfloor}\hat{f}(\xi)\right|\xi^{\lfloor q\rfloor-1-i}\rd\xi\lesssim\bigl\Vert\hat{f};\mathcal{T}_{\alpha,q}\bigr\Vert\left|k\right|^{q-i}\,.
\end{align*}

\end{proof}
For functions of $k$ and $y$, we define the following spaces with
norms reflecting the scaling property of the Jeffery-Hamel solution:
\begin{defn}[function spaces on $\Omega$]
\label{def:function-spaces-bulk}For $\alpha\geq0$ and
$q\in\mathbb{R}$, let $\mathcal{B}_{\alpha,q}$ be the Banach space
of functions $\hat{f}\in C(X\times\left[1,\infty\right),\mathbb{C})$
where $X$ is defined by \eqref{defX}, such that $\overline{\hat{f}(k,y)}=\hat{f}(-k,y)$
and such that the norm\index{Space!B@$\mathcal{B}_{\alpha,q}$}
\[
\bigl\Vert\hat{f};\mathcal{B}_{\alpha,q}\bigr\Vert=\sup_{X\times\left[1;\infty\right)}\frac{\bigl|\hat{f}\bigr|}{\mu_{\alpha,q}}\,,
\]
is finite, where the weight is given by
\[
\mu_{\alpha,q}(k,y)=\begin{cases}
{\displaystyle \frac{1}{y^{q}}\frac{1}{1+\left(\left|k\right|y\right)^{\alpha}}}\,, & q\geq0\,,\\
{\displaystyle \frac{1}{y^{q}}\frac{1}{1+\left(\left|k\right|y\right)^{\alpha}}\left(1+\frac{1}{\left(\left|k\right|y\right)^{-q}}\right)}\,, & q<0\,.
\end{cases}
\]
For $\alpha\geq0$ and $q\geq0$, the space for the velocity field
$\mathcal{U}_{\alpha,q}$ is Banach space of functions in $\mathcal{B}_{\alpha+1,q-1}$
such that the following norm is finite,\index{Space!U@$\mathcal{U}_{\alpha,q}$}
\[
\bigl\Vert\hat{f};\mathcal{U}_{\alpha,q}\bigr\Vert=\sum_{i=0}^{\lfloor q\rfloor}\sum_{j=0}^{\lfloor\alpha\rfloor}\bigl\Vert\partial_{y}^{j}\partial_{k}^{i}\hat{f};\mathcal{B}_{\alpha+1-j,q-1-i+j}\bigr\Vert\,.
\]
For $\alpha\geq1$ and $q\geq1$, the spaces for the pressure $\mathcal{P}_{\alpha,q}$
and for the nonlinearity $\mathcal{R}_{\alpha,q}$ are the Banach
spaces of functions in $\mathcal{B}_{\alpha+1,q-1}$ such that the
respective norms are finite,\index{Space!P@$\mathcal{P}_{\alpha,q}$}\index{Space!R@$\mathcal{R}_{\alpha,q}$}
\begin{align*}
\bigl\Vert\hat{f};\mathcal{P}_{\alpha,q}\bigr\Vert & =\sum_{i=0}^{\lfloor q\rfloor-1}\sum_{j=0}^{\lfloor\alpha\rfloor}\bigl\Vert\partial_{y}^{j}\partial_{k}^{i}\hat{f};\mathcal{B}_{\alpha+1-j,q-1-i+j}\bigr\Vert\,,\\
\bigl\Vert\hat{f};\mathcal{R}_{\alpha,q}\bigr\Vert & =\sum_{i=0}^{\lfloor q\rfloor-1}\sum_{j=0}^{\lfloor\alpha\rfloor-1}\bigl\Vert\partial_{y}^{j}\partial_{k}^{i}\hat{f};\mathcal{B}_{\alpha+1-j,q-1-i+j}\bigr\Vert\,.
\end{align*}

\end{defn}
\begin{rem}
The parameter $\alpha$ captures the behavior of functions at infinity
as a function of $\left|k\right|y$, which is reminiscent of the scaling
properties in $x/y$ of the Jeffery-Hamel solution. By taking the
inverse Fourier transform, the parameter $\alpha$ corresponds to
the regularity in $x$ in direct space. The index $q$ determines
the decay in $y$ at infinity. As we will see below, functions on
the boundary which are in $\mathcal{A}_{\alpha,q}$ are in the space
$\mathcal{B}_{\alpha,q}$, when evolved in time by $\e^{-\left|k\right|y}$.
The spaces $\mathcal{U}_{\alpha,q}$, $\mathcal{P}_{\alpha,q}$ and
$\mathcal{R}_{\alpha,q}$ include derivatives with respect to $k$
in order to catch the behavior near $k=0$ and derivatives with respect
to $y$ for the regularity in the $y$-direction.
\end{rem}

\begin{rem}
For $\alpha^{\prime}\geq\alpha$ and $q^{\prime}\geq q$ we have the
inclusion $\mathcal{X}_{\alpha^{\prime},q^{\prime}}\subset\mathcal{X}_{\alpha,q}$
for $\mathcal{X}=\mathcal{A}$, $\mathcal{B}$, $\mathcal{T}$, $\mathcal{W}$,
$\mathcal{U}$, $\mathcal{P}$, and $\mathcal{R}$, which will be
routinely used without mention.
\end{rem}

\begin{rem}
\label{rem:on-boundary}Since the completion defining $\mathcal{B}_{\alpha,q}$
is defined by starting from smooth functions on the closed set $\mathbb{R}\times\left[1,\infty\right)$,
the restriction of a function $\hat{f}\in\mathcal{B}_{\alpha,q}$
to the boundary $y=1$, is a function in $\mathcal{A}_{\alpha,q}$.
In the same way, the restriction of $\hat{f}\in\mathcal{R}_{\alpha,q}$
is in $\mathcal{T}_{\alpha,q}$.
\end{rem}
These spaces lead to the following regularity in direct space:
\begin{lem}
\label{lem:spaces-regularity}For $\alpha>1$ and $q\ge0$,
if $\hat{f}\in\mathcal{B}_{\alpha,q}$, we have
\begin{align*}
f & \in C(\Omega)\,, & y^{1+q}f\in & L^{\infty}(\Omega)\,, & y^{q-\varepsilon}f & \in L^{2}(\Omega)\,,
\end{align*}
for all $\varepsilon>0$. For $\alpha\notin\mathbb{N}$ and $q\ge0$,
if $\hat{f}\in\mathcal{U}_{\alpha,q}$ or $\hat{f}\in\mathcal{P}_{\alpha,q}$
we have
\begin{align*}
f & \in C^{\lfloor\alpha\rfloor}(\Omega)\,, & y^{q+i+j}\partial_{x}^{i}\partial_{y}^{j}f & \in L^{\infty}(\Omega)\,, & y^{q-1+i+j-\varepsilon}f & \in L^{2}(\Omega)\,,
\end{align*}
for $i+j\leq\lfloor\alpha\rfloor$, and $\varepsilon>0$.\end{lem}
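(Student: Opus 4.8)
The plan is to work entirely from the inversion formula $f(x,y)=\int_{\mathbb{R}}\e^{\i kx}\hat{f}(k,y)\,\rd k$ and to reduce every assertion to an elementary one-dimensional integral estimate, using systematically the rescaling $u=|k|y$ that is built into the weights $\mu_{\alpha,q}$.

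For the first part ($\hat{f}\in\mathcal{B}_{\alpha,q}$, $\alpha>1$, $q\ge0$) I would start from $|\hat{f}(k,y)|\le\|\hat{f};\mathcal{B}_{\alpha,q}\|\,\mu_{\alpha,q}(k,y)$; the substitution $u=|k|y$ then gives $\int_{\mathbb{R}}|\hat{f}(k,y)|\,\rd k\le\|\hat{f};\mathcal{B}_{\alpha,q}\|\,y^{-q-1}\int_{\mathbb{R}}(1+|u|^{\alpha})^{-1}\rd u$, and the last integral is finite precisely because $\alpha>1$. Since moreover $\mu_{\alpha,q}(k,y)\le\mu_{\alpha,q}(k,1)=(1+|k|^{\alpha})^{-1}\in L^{1}(\mathbb{R})$ for $y\ge1$ and $q\ge0$, dominated convergence shows that $f$ is well defined, continuous on $\Omega$, and satisfies $|f(x,y)|\le C\,y^{-q-1}\|\hat{f};\mathcal{B}_{\alpha,q}\|$, i.e.\ $y^{1+q}f\in L^{\infty}(\Omega)$. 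For the $L^{2}$-bound I would use Plancherel in $x$ at fixed $y$, $\int_{\mathbb{R}}|f(x,y)|^{2}\rd x=2\pi\int_{\mathbb{R}}|\hat{f}(k,y)|^{2}\rd k\le C\,y^{-2q-1}\|\hat{f};\mathcal{B}_{\alpha,q}\|^{2}$ (again by $u=|k|y$, now with $\int_{\mathbb{R}}(1+|u|^{\alpha})^{-2}\rd u<\infty$), and then integrate in $y$: $\int_{1}^{\infty}y^{2(q-\varepsilon)}y^{-2q-1}\rd y=\int_{1}^{\infty}y^{-1-2\varepsilon}\rd y<\infty$ for every $\varepsilon>0$, which is exactly $y^{q-\varepsilon}f\in L^{2}(\Omega)$.

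For the second part ($\hat{f}\in\mathcal{U}_{\alpha,q}$ or $\mathcal{P}_{\alpha,q}$, $\alpha\notin\mathbb{N}$, $q\ge0$; the two cases being handled identically, since only the $i=0$ terms of the norms intervene) the key remark is that $\partial_{x}^{i}\partial_{y}^{j}f(x,y)=\int_{\mathbb{R}}(\i k)^{i}\e^{\i kx}\,\partial_{y}^{j}\hat{f}(k,y)\,\rd k$, so that everything reduces to estimating $\int_{\mathbb{R}}|k|^{i}|\partial_{y}^{j}\hat{f}(k,y)|\,\rd k$; by the definition of $\mathcal{U}_{\alpha,q}$ one has $\partial_{y}^{j}\hat{f}\in\mathcal{B}_{\alpha+1-j,\,q-1+j}$ for $0\le j\le\lfloor\alpha\rfloor$. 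Inserting $|\partial_{y}^{j}\hat{f}|\le\|\hat{f};\mathcal{U}_{\alpha,q}\|\,\mu_{\alpha+1-j,q-1+j}$ and substituting $u=|k|y$ I would obtain
\[
\int_{\mathbb{R}}|k|^{i}\,|\partial_{y}^{j}\hat{f}(k,y)|\,\rd k
\le C\,\|\hat{f};\mathcal{U}_{\alpha,q}\|\;y^{-(q+i+j)}
\int_{\mathbb{R}}\frac{|u|^{i}}{1+|u|^{\alpha+1-j}}\,\rd u\,,
\]
and the $u$-integral is finite if and only if $\alpha+1-j>i+1$, i.e.\ $i+j<\alpha$, which for $\alpha\notin\mathbb{N}$ is the same as $i+j\le\lfloor\alpha\rfloor$. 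Together with the standard theorems on differentiation under the integral sign (applicable because the same majorant is $k$-integrable and locally bounded in $y$, and because the $y$-derivatives of $\hat{f}$ up to order $\lfloor\alpha\rfloor$ exist pointwise, the spaces being completions of smooth functions whose norms control those derivatives), this gives $f\in C^{\lfloor\alpha\rfloor}(\Omega)$ and $|\partial_{x}^{i}\partial_{y}^{j}f(x,y)|\le C\,y^{-(q+i+j)}\|\hat{f};\mathcal{U}_{\alpha,q}\|$ for $i+j\le\lfloor\alpha\rfloor$. The $L^{2}$-bound follows the same scheme: Plancherel gives $\int_{\mathbb{R}}|\partial_{x}^{i}\partial_{y}^{j}f(x,y)|^{2}\rd x=2\pi\int_{\mathbb{R}}|k|^{2i}|\partial_{y}^{j}\hat{f}(k,y)|^{2}\rd k\le C\,y^{1-2(q+i+j)}\|\hat{f};\mathcal{U}_{\alpha,q}\|^{2}$, the relevant integral $\int_{\mathbb{R}}|u|^{2i}(1+|u|^{\alpha+1-j})^{-2}\rd u$ being finite under the same restriction, and then $\int_{1}^{\infty}y^{2(q-1+i+j-\varepsilon)}y^{1-2(q+i+j)}\rd y=\int_{1}^{\infty}y^{-1-2\varepsilon}\rd y<\infty$, i.e.\ $y^{q-1+i+j-\varepsilon}\partial_{x}^{i}\partial_{y}^{j}f\in L^{2}(\Omega)$.

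The main difficulty is bookkeeping rather than any individual inequality: one has to track which branch of $\mu_{\alpha+1-j,q-1+j}$ is active (the definition of $\mu$ branches at the second index $=0$, so for $j=0$ with $q<1$ the ``negative'' branch, with its extra factor $1+(|k|y)^{q-1}$, must be used), and then verify in each case that the resulting one-dimensional integral converges both at $u=\infty$ and at $u=0$. The behaviour at $u=\infty$ is what yields the condition $i+j\le\lfloor\alpha\rfloor$ and forces the hypothesis $\alpha\notin\mathbb{N}$ (so that $i+j<\alpha$ and $i+j\le\lfloor\alpha\rfloor$ coincide), while the behaviour at $u=0$ is where the hypotheses $\alpha>1$ (first part) and $q\ge0$ (second part) are used. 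The remaining, routine point is to justify the interchange of limits and derivatives with the integral and the continuity of the resulting functions, which one does by restricting $y$ to a compact subinterval of $[1,\infty)$ so that the majorants above become uniformly $k$-integrable and the dominated convergence theorem applies.
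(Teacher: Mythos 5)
Your proof is correct and follows essentially the same route as the paper: invert the Fourier transform, bound $\|\hat f(\cdot,y)\|_{L^{1}}$ via the rescaling $u=|k|y$ to get the weighted $L^{\infty}$ estimate, use Plancherel at fixed $y$ and then integrate in $y$ for the $L^{2}$ estimate, and reduce the derivative bounds to the first part through $\partial_{x}^{i}\partial_{y}^{j}f=\mathcal{F}\bigl[(\i k)^{i}\partial_{y}^{j}\hat f\bigr]$. If anything your bookkeeping is the more careful one: you place $\partial_{y}^{j}\hat f$ in $\mathcal{B}_{\alpha+1-j,q-1+j}$, which is exactly what the norm of $\mathcal{U}_{\alpha,q}$ controls, you track the negative-branch weight when $q-1+j<0$, and you make explicit that the convergence of $\int|u|^{i}(1+|u|^{\alpha+1-j})^{-1}\,\rd u$ at infinity is where the hypothesis $\alpha\notin\mathbb{N}$ enters, whereas the paper's proof simply cites $\partial_{y}^{j}\hat f\in\mathcal{B}_{\alpha-j,q+j}$ and applies the first part.
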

\begin{proof}
We consider $\hat{f}\in\mathcal{B}_{\alpha,q}$. At fixed $y$, $\hat{f}(\bcdot,y)\in L^{1}(\mathbb{R})$,
so $f$ is continuous in $x$. The continuity in $y$ follows from
the fact that $\hat{f}(k,\cdot)\in C([1,\infty))$, so $f\in C(\Omega)$.
Since
\[
\big|\mathcal{F}[\hat{\mu}_{\alpha,q}]\big|\leq\frac{1}{y^{q+1}}\int_{\mathbb{R}}\frac{1}{1+z^{\alpha}}\rd z\leq\frac{\alpha}{\alpha-1}\frac{1}{y^{q+1}}\,,
\]
then $y^{q+1}f\in L^{\infty}$. Finally, by Parseval identity
\[
\int_{\mathbb{R}}\left|f(x,y)\right|^{2}\,\rd x=\int_{\mathbb{R}}\big|\hat{f}(k,y)\big|^{2}\,\rd k\leq\frac{\bigl\Vert\hat{f};\mathcal{B}_{\alpha,q}\bigr\Vert}{y^{2q+1}}\int_{\mathbb{R}}\frac{1}{1+z^{2\alpha}}\rd z\lesssim\frac{1}{y^{2q+1}}\,,
\]
so $y^{q-\varepsilon}f\in L^{2}(\Omega)$ for all $\varepsilon>0$.

Finally, we consider $\hat{f}\in\mathcal{U}_{\alpha,q}$ or $\hat{f}\in\mathcal{P}_{\alpha,q}$.
Since $\partial_{x}^{i}\partial_{y}^{j}f=\mathcal{F}\bigl[\left(\i k\right)^{i}\partial_{y}^{j}\hat{f}\bigr]$
and $\partial_{y}^{j}\hat{f}\in\mathcal{B}_{\alpha-j,q+j}$, we have
$\left|k\right|^{i}\partial_{y}^{j}\hat{f}\in\mathcal{B}_{\alpha-j-i,q+j+i}$,
so by applying the previous result, we obtain the claimed properties.
\end{proof}

\section{Stokes system\label{sec:stokes}}

In this section we consider the following inhomogeneous Stokes system,
\begin{align}
\Delta\bu-\bnabla p & =\bnabla\bcdot\mathbf{Q}\,, & \bnabla\bcdot\bu & =0\,, & \left.\bu\right|_{\partial\Omega} & =\bu^{*}\,,\label{eq:Stokes-u}
\end{align}
where $\mathbf{Q}$ is a given symmetric tensor. For simplicity we
define
\begin{align*}
R & =Q_{12}=Q_{21}\,, & S & =\frac{1}{2}\left(Q_{11}-Q_{22}\right)\,.
\end{align*}
The aim is to determine the compatibility conditions on the boundary
data $\bu^{*}$ and on the inhomogeneous term $\bR=\left(R,S\right)$,
such that \eqref{Stokes-u} admits an $(\alpha,q)$-solution:
\begin{defn}[$(\alpha,q)$-solutions for Stokes]
A pair $\left(\hat{\bu},\hat{p}\right)\in\mathcal{U}_{\alpha,q}\times\mathcal{P}_{\alpha-1,q+1}$
is called an $(\alpha,q)$-solution of the Stokes equation, if it
satisfies the Fourier transform (with respect to $x$) of the Stokes
equation \eqref{Stokes-u}.\end{defn}
\begin{lem}[$(\alpha,q)$-solutions are classical solutions]
For $\alpha>2$ and $q>0$, if $\left(\hat{\bu},\hat{p}\right)$
is an $(\alpha,q)$-solution, then its inverse Fourier transform $\left(\bu,p\right)$
has the regularity $\left(\bu,p\right)\in C^{2}(\Omega)\times C^{1}(\Omega)$
and satisfies the Stokes equation \eqref{Stokes-u} in the
classical sense.\end{lem}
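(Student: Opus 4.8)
The plan is to read everything off Lemma~\ref{lem:spaces-regularity} and its proof; the only point needing a word of care is that that lemma is stated for non-integer indices.

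\emph{Regularity.} Since $\mathcal{U}_{\alpha,q}\subset\mathcal{U}_{\alpha',q}$ and $\mathcal{P}_{\alpha-1,q+1}\subset\mathcal{P}_{\alpha'-1,q+1}$ whenever $\alpha'\le\alpha$, and $\alpha>2$, I would fix $\alpha'\in(2,\alpha]$ with $\alpha'\notin\mathbb{N}$, so that $\lfloor\alpha'\rfloor=2$ and $\lfloor\alpha'-1\rfloor=1$. Lemma~\ref{lem:spaces-regularity} applied to $\hat\bu\in\mathcal{U}_{\alpha',q}$ and $\hat p\in\mathcal{P}_{\alpha'-1,q+1}$ then yields $\bu\in C^{2}(\Omega)$ and $p\in C^{1}(\Omega)$, the weighted $L^{\infty}$ bounds on $\partial_x^i\partial_y^j\bu$ for $i+j\le2$ and on $\partial_x^i\partial_y^j p$ for $i+j\le1$, and (from the first part of that lemma, applied to the relevant $\mathcal{B}$-spaces, whose weights are bounded near $y=1$) the continuity of $\bu$ and $p$ up to the boundary $y=1$.

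\emph{The equations hold pointwise.} The proof of Lemma~\ref{lem:spaces-regularity} already shows, for exactly these ranges of indices, that one may differentiate under the integral sign: $\partial_x^i\partial_y^j\bu=\mathcal{F}[(\i k)^i\partial_y^j\hat\bu]$ for $i+j\le2$ and $\bnabla p=\mathcal{F}[(\i k\,\hat p,\partial_y\hat p)]$ — here $\alpha>2$ is precisely what makes $(\i k)^2\hat\bu$ and $\i k\,\hat p$ integrable in $k$ with a bound locally uniform in $y$, via the dominated convergence argument of that lemma. By hypothesis $(\hat\bu,\hat p)$ solves the $x$-Fourier transform of \eqref{Stokes-u}; applying $\mathcal{F}$ to that system and using these identities gives $\Delta\bu-\bnabla p=\bnabla\bcdot\mathbf{Q}$ and $\bnabla\bcdot\bu=0$ at every point of $\Omega$. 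Since $\bu\in C^2(\Omega)$ and $p\in C^1(\Omega)$ the left-hand sides are continuous, so the identities hold in the classical sense (and in particular $\bnabla\bcdot\mathbf{Q}$, read as the inverse transform of the right-hand side of the transformed system, is continuous). Finally, by Remark~\ref{rem:on-boundary} the restriction of $\hat\bu$ to $y=1$ is the $\mathcal{A}_{\alpha,q}$-function whose inverse Fourier transform is the prescribed datum $\bu^*$; since $\bu$ is continuous up to $y=1$, its trace on $\partial\Omega$ is that inverse Fourier transform, namely $\bu^*$.

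I do not expect a genuine obstacle: the whole statement is a consequence of the mapping properties already encoded in the function spaces, and the hypotheses $\alpha>2$ (so that two $x$-derivatives of $\hat\bu$, and one of $\hat p$, still land in an $L^1$-in-$k$ space) and $q>0$ (so that the $\mathcal{U}_{\alpha,q}$ and $\mathcal{P}_{\alpha-1,q+1}$ norms are defined and the $y$-weights are controlled near $y=1$) are exactly calibrated for this. The only thing to keep explicit is the reduction to a non-integer $\alpha'$ above, since Lemma~\ref{lem:spaces-regularity} excludes integer indices.
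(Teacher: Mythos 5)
Your proposal is correct and follows the same route as the paper: apply Lemma~\ref{lem:spaces-regularity} to obtain $\left(\bu,p\right)\in C^{2}(\Omega)\times C^{1}(\Omega)$, then invert the Fourier transform of the system to get the equations pointwise. You are merely more explicit than the paper about two details it glosses over --- passing to a non-integer $\alpha'$ so that Lemma~\ref{lem:spaces-regularity} applies, and justifying differentiation under the integral sign --- both of which are handled correctly.
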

\begin{proof}
In view of \lemref{spaces-regularity}, we obtain that $\left(\bu,p\right)\in C^{2}(\Omega)\times C^{1}(\Omega)$,
and since $\left(\hat{\bu},\hat{p}\right)$ satisfies the Fourier
transform of \eqref{Stokes-u}, we obtain that $\left(\bu,p\right)$
is a solution of \eqref{Stokes-u} in the classical sense.\end{proof}
\begin{thm}[existence of $(\alpha,q)$-solutions for Stokes]
\label{thm:Stokes}\index{Stokes equations!in the half-plane}\index{Asymptotic behavior!in the half-plane!Stokes solutions}We
have:
\begin{enumerate}
\item For all $\alpha>2$ and $q\geq1$, if $\hat{\mathbf{Q}}=\bzero$ and
$\hat{\bu}^{*}\in\mathcal{W}_{\alpha,q}$, there exists an $(\alpha,q)$-solution.
\item For all $\alpha>2$ and $q>1$ with $q\notin\mathbb{N}$, if $\hat{\mathbf{Q}}\in\mathcal{R}_{\alpha,q+1}$
and $\hat{\bu}^{*}\in\mathcal{T}_{\alpha,q}$ there exists an $(\alpha,q)$-solution
provided the compatibility condition $\hat{\bu}_{r}\in\mathcal{W}_{\alpha,q}$
holds, which is explicitly written in the following proposition.
\end{enumerate}
\end{thm}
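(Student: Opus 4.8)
The plan is to pass to Fourier space in the variable $x$, reduce \eqref{Stokes-u} to a family of linear ordinary differential equations in $y$ indexed by $k$, solve them explicitly, and then read off the constraints. First I would observe that the trace part of $\mathbf{Q}$ can be absorbed into $\bnabla p$, so without loss of generality $\mathbf{Q}$ is trace-free and $\bR=(R,S)$ parametrizes it through $Q_{11}=-Q_{22}=S$, $Q_{12}=Q_{21}=R$. Taking the Fourier transform replaces $\Delta$ by $\partial_y^2-k^2$ and the system becomes
\begin{align*}
(\partial_y^2-k^2)\hat u-\i k\hat p&=\i k\hat S+\partial_y\hat R\,, & (\partial_y^2-k^2)\hat v-\partial_y\hat p&=\i k\hat R-\partial_y\hat S\,, & \i k\hat u+\partial_y\hat v&=0\,.
\end{align*}
Combining $\i k$ times the first equation with $\partial_y$ of the second and using incompressibility eliminates the velocity and yields the scalar equation $(\partial_y^2-k^2)\hat p=(\partial_y^2+k^2)\hat S-2\i k\partial_y\hat R$ for the pressure. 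I would solve this on $[1,\infty)$ by variation of parameters with the decaying fundamental solution $\e^{-|k|y}$, leaving one undetermined coefficient $\hat p_0(k)$; substitute $\hat p$ into the $\hat v$ equation and solve it the same way, leaving a second coefficient $\hat v_0(k)$; and recover $\hat u=\tfrac{\i}{k}\partial_y\hat v$ from incompressibility. The first momentum equation is then automatic for $k\neq0$ (it is the combination used to derive the pressure equation), and the $k=0$ slice is handled as a limit. The two boundary conditions $\hat u(k,1)=\hat u^*(k)$ and $\hat v(k,1)=\hat v^*(k)$ determine $\hat p_0$ and $\hat v_0$, so the $(\alpha,q)$-solution, if it exists, is unique and given by explicit formulas.

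For the homogeneous problem $\mathbf{Q}=\bzero$ the pressure is harmonic, $\hat p_0=-2\i k\hat u^*+2|k|\hat v^*$, and the reconstructed velocity turns out to be
\[
\hat u=\hat u^*\e^{-|k|(y-1)}-(y-1)\e^{-|k|(y-1)}\bigl(|k|\hat u^*+\i k\hat v^*\bigr)\,,
\]
with an analogous formula for $\hat v$; in particular the factor $1/k$ cancels completely and what remains is multiplication of $\hat u^*,\hat v^*$ by the semigroup $\e^{-|k|y}$ and by $(|k|y)\e^{-|k|y}$. Membership in $\mathcal{U}_{\alpha,q}\times\mathcal{P}_{\alpha-1,q+1}$ then follows from the bounds on such multiplications established by \citet{Wittwer-structureofStationary2002} together with the refinements of \citet{Hillairet.Wittwer-Existenceofstationary2009,Boeckle.Wittwer-Decayestimatessolutions2012}. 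The only requirement on the boundary data is that $\hat{\bu}^*$, together with its $\lfloor q\rfloor$ first $k$-derivatives, sit in the appropriate $\mathcal{A}$-spaces near $k=0$ — which is needed because $\partial_k\hat u$ must remain continuous at $k=0$ — i.e. $\hat{\bu}^*\in\mathcal{W}_{\alpha,q}$; this is exactly the hypothesis of part (1).

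When $\mathbf{Q}\neq\bzero$ the same scheme produces, in addition to the homogeneous-type terms, contributions built from $y$-convolutions of $\hat R,\hat S$ against $\e^{-|k||y-y'|}$ and $\e^{-|k|(y+y'-2)}$; these are controlled precisely because of the weight defining $\mathcal{R}_{\alpha,q+1}$. The difference is that the $\bR$-dependent part of the pressure no longer combines with $\hat v_0=\hat v^*$ to cancel the factor $1/k$ coming from $\hat u=\tfrac{\i}{k}\partial_y\hat v$, so the candidate $\hat u$ carries a term that need not be regular at $k=0$ to the order $\lfloor q\rfloor-1$ demanded by $\mathcal{U}_{\alpha,q}$. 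Collecting this obstruction into an explicitly-defined quantity $\hat{\bu}_r$, built from $\hat R,\hat S$ (and $\hat{\bu}^*$) via $y$-moments and spelled out in the following proposition, the candidate lies in $\mathcal{U}_{\alpha,q}\times\mathcal{P}_{\alpha-1,q+1}$ exactly when $\hat{\bu}_r\in\mathcal{W}_{\alpha,q}$; here \lemref{spaces-TtoU} is used to translate ``$\mathcal{T}_{\alpha,q}$ with vanishing Taylor coefficients at $0$'' into ``$\mathcal{W}_{\alpha,q}$''.

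I expect the main obstacle to be exactly this bookkeeping near $k=0$: tracking, term by term, how many $k$-derivatives vanish there and matching them against the weights $\mu_{\alpha+1-j,\,q-1-i+j}$ that enter the $\mathcal{U}$, $\mathcal{P}$ norms, while simultaneously keeping the $y$-decay. The hypothesis $\alpha>2$ is what makes the relevant $k$-integrals converge and the $\lfloor\alpha\rfloor$ $y$-derivatives land in the right spaces, and the assumption $q\notin\mathbb{N}$ in part (2) rules out the borderline situation in which differentiating the weights $\lfloor q\rfloor-1$ times produces a power $|k|^0$ or a logarithm and the compatibility condition degenerates; the estimates on convolution in $k$ from \citet{Wittwer-structureofStationary2002} then close the argument.
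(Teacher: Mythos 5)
Your plan is correct and ends up with exactly the representation the paper uses (your homogeneous formulas for $\hat u$ and for the boundary pressure coincide with the paper's $\mathbf{B}$ and with \eqref{Green-p}), but you derive it by a different elimination. The paper (\propref{Green}) never forms the pressure equation: it introduces $\gamma=\omega+R$ and an auxiliary variable $\eta$ and rewrites incompressibility plus the vorticity equation as a first-order dynamical system $\partial_y\hat{\br}=\mathbf{L}\hat{\br}+\hat{\bq}$ with eigenvalues $\pm\left|k\right|$, so that the split between $\int_1^y$ and $\int_y^\infty$, the polynomial factors $(y-z)$ and $(y-1)$, and the absence of any division by $k$ all come automatically from the spectral projection $\mathbf{P}$ and the Jordan form of $\e^{\mathbf{L}y}$. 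Your pressure-first cascade is more elementary and equivalent, at the price of checking by hand that the $1/k$ in $\hat u=\tfrac{\i}{k}\partial_y\hat v$ cancels and that the first momentum equation is recovered (your verification of the latter is right). Two remarks. First, the analytic core of the paper's proof is not this algebra but the mapping properties $U_r\colon\mathcal{W}_{\alpha,q}\to\mathcal{U}_{\alpha+r,q-r}$ and $T^{\pm}\colon\mathcal{R}_{\alpha,q+1}\to\mathcal{U}_{\alpha+1,q}$ of \lemref{bound-U}, \lemref{bound2-U}, \lemref{bound-T} and \lemref{bound2-T}, including the commutators of $\partial_k$ and $\partial_y$ with the kernels and the index bookkeeping in $\mathcal{B}_{\alpha+1-j,q-1-i+j}$; you defer these to \citet{Wittwer-structureofStationary2002} and its sequels, which treat similar but not identical weighted spaces, so these estimates would still have to be written out. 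Second, your attribution of the compatibility condition to an uncancelled $1/k$ is not quite the mechanism: the obstruction is rather that $\hat{\bu}_r=\hat{\bu}^*-\left.\mathbf{N}\right|_{y=1}\hat{\bR}$ lies a priori only in $\mathcal{T}_{\alpha,q}$, while the propagators $U_0,U_1$ need data in $\mathcal{W}_{\alpha,q}$ (vanishing Taylor coefficients at $k=0$) to produce the decay $y^{-q}$; your conclusion --- require $\hat{\bu}_r\in\mathcal{W}_{\alpha,q}$ and characterize this via \lemref{spaces-TtoU} --- is nonetheless exactly the paper's.
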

\begin{prop}[compatibility conditions]
\label{prop:compatibility}For $q\in\left(1,2\right)$,
there is one compatibility condition
\begin{align*}
\hat{u}^{*}(0)+\int_{1}^{\infty}\hat{R}(0,y)\,\rd y & =0\,, & \hat{v}^{*}(0) & =0\,,
\end{align*}
and for $p\in\left(2,3\right)$, we have the additional conditions,
\begin{align*}
\partial_{k}\hat{u}^{*}(0)+\int_{1}^{\infty}\partial_{k}\hat{R}(0,y)\,\rd y-2\i\int_{1}^{\infty}\left(y-1\right)\hat{S}(0,y)\,\rd y & =0\,,\\
\partial_{k}\hat{v}^{*}(0)+\int_{1}^{\infty}\partial_{k}\hat{R}(0,y)\,\rd y+\i\int_{1}^{\infty}\left(y-1\right)\hat{R}(0,y)\,\rd y & =0\,.
\end{align*}

\end{prop}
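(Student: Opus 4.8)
The plan is to make the abstract requirement $\hat{\bu}_r\in\mathcal W_{\alpha,q}$ from part (2) of the preceding theorem explicit, using \lemref{spaces-TtoU}: since $q\in(1,3)$ and $q\notin\mathbb{N}$, that lemma reduces $\hat{\bu}_r\in\mathcal W_{\alpha,q}$ to the vanishing of the Taylor coefficients $\partial_k^i\hat{\bu}_r(0)$ for $0\le i\le\lfloor q\rfloor-1$, i.e.\ one vector equation when $q\in(1,2)$ and a second one when $q\in(2,3)$. From the construction in the proof of that theorem one may take $\hat{\bu}_r=\hat{\bu}^*-\hat{\bu}_p\big|_{y=1}\in\mathcal T_{\alpha,q}$, where $\hat{\bu}_p$ is the particular solution of the Fourier transform of \eqref{Stokes-u} driven by $\hat{\mathbf{Q}}$ that decays as $y\to\infty$, so it remains to evaluate $\hat{\bu}_p(\cdot,1)$ and $\partial_k\hat{\bu}_p(\cdot,1)$ at $k=0$; the $v$-components of the conditions then express the flux (resp.\ its first moment) and the $u$-components the asymmetry (resp.\ its first moment).

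To compute these I would eliminate $\hat u=\tfrac{\i}{k}\partial_y\hat v$ and $\hat p$ to reduce the system to the single fourth order equation $(\partial_y^2-k^2)^2\hat v=2k^2\partial_y\hat S-\i k(\partial_y^2+k^2)\hat R$, whose decaying homogeneous solutions are $\e^{-|k|(y-1)}$ and $|k|(y-1)\e^{-|k|(y-1)}$, and build $\hat v_p$ by Duhamel against the decaying fundamental solution $-\tfrac{1}{2|k|}\e^{-|k||y-y'|}$ of $\partial_y^2-k^2$. Because the right-hand side vanishes to first order at $k=0$, the function $\hat v_p(0,\cdot)$ solves $\partial_y^4\hat v_p(0,\cdot)=0$ and hence vanishes; differentiating the fourth order equation once, then twice, in $k$ at $k=0$ gives $\partial_y^4\partial_k\hat v_p(0,\cdot)=-\i\,\partial_y^2\hat R(0,\cdot)$ and $\partial_y^4\partial_k^2\hat v_p(0,\cdot)=4\partial_y\hat S(0,\cdot)-2\i\,\partial_y^2\partial_k\hat R(0,\cdot)$. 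Integrating these from $y=\infty$ --- legitimate since $\hat{\mathbf{Q}}\in\mathcal R_{\alpha,q+1}$ forces $|\hat R(0,y)|,|\hat S(0,y)|\lesssim y^{-q}$, so that the relevant weighted integrals $\int_1^\infty(y-1)\hat R(0,y)\,\rd y$, $\int_1^\infty(y-1)\hat S(0,y)\,\rd y$ and $\int_1^\infty\partial_k\hat R(0,y)\,\rd y$ converge absolutely --- and passing through $\hat u_p(0,\cdot)=\i\,\partial_y\partial_k\hat v_p(0,\cdot)$ and $\partial_k\hat u_p(0,\cdot)=\tfrac{\i}{2}\partial_y\partial_k^2\hat v_p(0,\cdot)$, one evaluates at $y=1$ and reads off the stated conditions, the factors $\i$, $2\i$ coming from the $\i k$ in the Fourier transform of $\bnabla\bcdot\mathbf{Q}$ and from $\hat u=\tfrac{\i}{k}\partial_y\hat v$.

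As an independent check, not using the solution formula, one can argue in direct space with moment identities: integrating the first component of \eqref{Stokes-u} over $x\in\mathbb{R}$ gives $\partial_y^2\!\int_{\mathbb{R}}u\,\rd x=\partial_y\!\int_{\mathbb{R}}R\,\rd x$, and a solution with the required decay forces the constant of integration to vanish and then $\int_{\mathbb{R}}u(x,1)\,\rd x=-\int_1^\infty\!\!\int_{\mathbb{R}}R\,\rd x\,\rd y$, while integrating incompressibility over $x$ gives the flux condition $\int_{\mathbb{R}}v(x,1)\,\rd x=0$; the $q\in(2,3)$ conditions are the same statements applied to the first moments $\int_{\mathbb{R}}xu\,\rd x$ and $\int_{\mathbb{R}}xv\,\rd x$ (i.e.\ to $\partial_k\hat{\bu}$ at $k=0$), eliminating the pressure moment via $\int_{\mathbb{R}}p\,\rd x=\int_{\mathbb{R}}S\,\rd x$ for a decaying solution, the weight $y-1$ arising from solving the resulting second order ODEs in $y$ from $y=\infty$.

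The main difficulty is the bookkeeping near $k=0$. In the Fourier approach one must show that the a priori $1/|k|$- and $\sign(k)$-singular pieces carried by $\hat v_p$, by the two decaying-mode coefficients, and by the prefactor $\tfrac{\i}{k}$ in $\hat u=\tfrac{\i}{k}\partial_y\hat v$ cancel once those coefficients are fixed, leaving an expression that is genuinely $\lfloor q\rfloor$ times differentiable in $k$ at $0$ --- which is exactly where $q\notin\mathbb{N}$ enters, to exclude resonant (logarithmic) terms --- and then the exact constants must be tracked through the expansion. In the direct-space approach the counterpart is justifying the integrations by parts and the interchange of $\int_{\mathbb{R}}\rd x$ with $\partial_y$ and with $\int_1^\infty\rd y$, for which the decay built into $\hat{\bu}^*\in\mathcal T_{\alpha,q}$ and $\hat{\mathbf{Q}}\in\mathcal R_{\alpha,q+1}$ (giving $xu,xv\to0$ fast enough, $|\partial_k\hat R(0,y)|\lesssim y^{-q-1}$, etc.) is exactly what suffices.
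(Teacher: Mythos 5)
Your overall route is the paper's: the compatibility condition from \thmref{Stokes}(2) is $\hat{\bu}_{r}\in\mathcal{W}_{\alpha,q}$, and \lemref{spaces-TtoU} converts this into $\partial_{k}^{i}\hat{\bu}_{r}(0)=\bzero$ for $i\leq\lfloor q\rfloor-1$ --- which is essentially the paper's entire proof apart from the phrase ``by explicit calculations''. Where you differ is in how the explicit calculation is organized: the paper reads the values off the representation of $\hat{\bu}_{r}$ already established in \propref{Green} (evaluating $B^{\pm}$ and $\partial_{k}B^{\pm}$ at $k=0$, where the cut-off $\chi$ makes the relevant kernels equal to $1$), whereas you re-derive the decaying particular solution from the biharmonic-type equation for $\hat{v}$ and cross-check with direct-space moment identities. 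Your intermediate formulas are correct with the paper's Fourier convention --- I verified $(\partial_{y}^{2}-k^{2})^{2}\hat{v}=2k^{2}\partial_{y}\hat{S}-\i k(\partial_{y}^{2}+k^{2})\hat{R}$ and the two differentiated identities at $k=0$ --- and integrating them from $y=\infty$ together with $\hat{u}=\tfrac{\i}{k}\partial_{y}\hat{v}$ reproduces the two $u$-conditions and $\hat{v}^{*}(0)=0$ exactly. The moment-identity version is a genuinely useful addition: it exhibits the conditions as $\int_{\mathbb{R}}u(x,1)\,\rd x=-\int_{1}^{\infty}\!\int_{\mathbb{R}}R$, $\int_{\mathbb{R}}v(x,1)\,\rd x=0$, and their first moments, which the operator formalism obscures.

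Two caveats. First, carrying your computation through for the last condition gives $\partial_{k}\hat{v}(0,1)=-\i\int_{1}^{\infty}(y-1)\hat{R}(0,y)\,\rd y$, i.e.\ $\partial_{k}\hat{v}^{*}(0)+\i\int_{1}^{\infty}(y-1)\hat{R}(0,y)\,\rd y=0$ \emph{without} the term $\int_{1}^{\infty}\partial_{k}\hat{R}(0,y)\,\rd y$; the same answer comes out of $\partial_{k}\hat{v}_{r}(0)$ computed from \propref{Green} and of the moment identity $\int_{\mathbb{R}}xv(x,1)\,\rd x=\int_{1}^{\infty}(y-1)\int_{\mathbb{R}}R\,\rd x\,\rd y$, so the extra term in the stated proposition appears to be copied over from the $u$-condition by mistake --- but you should not claim to ``read off the stated conditions'' verbatim. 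Second, your $\hat{\bu}_{p}$ (Duhamel against $-\tfrac{1}{2|k|}\e^{-|k||y-y'|}$) is not literally the paper's $\mathbf{N}\hat{\bR}$; different particular solutions shift $\sign(k)$-singular pieces between $\hat{\bu}_{p}$ and the homogeneous part and hence change what ``$\hat{\bu}_{r}\in\mathcal{W}_{\alpha,q}$'' means. Your derivation is safe as a necessity argument (evaluate the equations at $k=0$ for a solution assumed to lie in $\mathcal{U}_{\alpha,q}$), but to recover the sufficiency statement in the form the paper uses one must work with the specific $\mathbf{N}$ of \propref{Green}, whose cut-off construction is exactly what guarantees $\mathbf{N}\hat{\bR}\in\mathcal{U}_{\alpha,q}$. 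You flag this cancellation issue yourself, which is the right instinct, but in a full write-up it would have to be resolved rather than noted.
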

The rest of this section is devoted to the proof of the existence
of $(\alpha,q)$-solution for Stokes system.
\begin{defn}
\label{def:operators}We define the following operators:
\begin{align*}
\left(T_{<}w\right)(k,y) & =\frac{1}{2}\int_{1}^{y}\e^{-\left|k\right|\left(y-z\right)}\left(1-\chi\left(\left|k\right|\left(z-1\right)\right)\right)w(k,z)\,\rd z\,,\\
\left(T_{>}^{\pm}w\right)(k,y) & =\frac{1}{2}\int_{y}^{\infty}\left(\e^{-\left|k\right|\left(z-y\right)}\pm\chi\left(\left|k\right|\left(z-1\right)\right)\e^{\left|k\right|\left(z-y\right)}\right)w(k,z)\,\rd z\,,\\
\left(U_{r}w\right)(k,y) & =\left(y-1\right)^{r}\e^{-\left|k\right|\left(y-1\right)}w(k)\,,
\end{align*}
where $\chi\in C_{0}^{\infty}(\mathbb{R})$ is a smooth cut-off function
such that
\begin{align}
\chi\left(\left[0,1\right]\right) & =\left\{ 1\right\} \,, & \chi\left(\left[2,\infty\right)\right) & =\left\{ 0\right\} \,,\label{eq:cutoff}
\end{align}
and their combinations:
\begin{align*}
T^{+} & =T_{<}-T_{>}^{+}\,, & T^{-} & =\sigma T_{<}+\sigma T_{>}^{-}\,,\\
B^{+} & =\left.T^{+}\right|_{y=1}\,, & B^{-} & =\left.T^{-}\right|_{y=1}\,.
\end{align*}
\end{defn}
\begin{prop}
\label{prop:Green}Formally, the Fourier transform of the
Stokes system is given by
\begin{align}
\hat{\bu} & =\mathbf{N}\hat{\bR}+\mathbf{B}\hat{\bu}_{r}\,,\qquad\hat{\bu}_{r}=\hat{\bu}^{*}-\left.\mathbf{N}\right|_{y=1}\hat{\bR}\,,\label{eq:Green-u}\\
\hat{p} & =-\i kT^{+}\hat{R}+\i kT^{-}\hat{S}-\hat{Q}_{12}-U_{0}\left[2\i k\left(\hat{u}_{r}+\sigma\hat{v}_{r}\right)\right]\,,\label{eq:Green-p}
\end{align}
where
\begin{align*}
\mathbf{N} & =\begin{pmatrix}T^{+}-T^{-}\i k\left(z-y\right) & -T^{-}-T^{+}\i k\left(z-y\right)\\
T^{+}\i k\left(z-y\right) & -T^{-}\i k\left(z-y\right)
\end{pmatrix}\,, & \mathbf{B} & =\begin{pmatrix}U_{0}-\left|k\right|U_{1} & -\i kU_{1}\\
-\i kU_{1} & U_{0}+\left|k\right|U_{1}
\end{pmatrix}\,.
\end{align*}
\end{prop}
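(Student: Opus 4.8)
The plan is to prove \eqref{Green-u}--\eqref{Green-p} by a direct, purely formal computation: take the Fourier transform of \eqref{Stokes-u} in $x$, reduce the coupled system of ordinary differential equations in $y$ thus obtained to a single scalar equation that can be solved explicitly on the half-line $y\in[1,\infty)$, impose decay as $y\to\infty$ together with the boundary condition at $y=1$, and finally reorganise the result into the matrix operators $\mathbf{N}$ and $\mathbf{B}$. Because the statement is only \emph{formal}, no attention need be paid at this stage to convergence of the integrals or to membership in the spaces of Definition~\ref{def:function-spaces-bulk}; securing those properties is precisely the business of the lemmas that follow and of Theorem~\ref{thm:Stokes}.

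After splitting off the trace of $\mathbf{Q}$, whose divergence is a gradient and can be absorbed into $\bnabla p$, the Fourier transform in $x$ of \eqref{Stokes-u} becomes
\begin{align*}
(\partial_y^2-k^2)\hat u-\i k\hat p & =\i k\hat S+\partial_y\hat R\,, \\
(\partial_y^2-k^2)\hat v-\partial_y\hat p & =\i k\hat R-\partial_y\hat S\,, \\
\i k\hat u+\partial_y\hat v & =0\,.
\end{align*}
Taking $\i k$ times the first line plus $\partial_y$ of the second and using incompressibility eliminates the velocity and yields the scalar pressure equation
\[
(\partial_y^2-k^2)\hat p=-2\i k\,\partial_y\hat R+(\partial_y^2+k^2)\hat S\,,
\]
whereas taking $\i k$ times the second line minus $\partial_y$ of the first eliminates the pressure and, together with incompressibility, produces a fourth order equation for $\hat v$ (equivalently, for the stream function). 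I would first solve the pressure equation using the Green's function of $\partial_y^2-k^2$ on $[1,\infty)$, retaining only the decaying homogeneous solution $\e^{-|k|(y-1)}$; splitting the Duhamel integral into the ranges $z<y$ and $z>y$ and peeling off with the cutoff $\chi$ the part supported near $z=1$ produces exactly the operators $T_<$ and $T_>^{\pm}$ of Definition~\ref{def:operators}, hence the combinations $T^{\pm}$, while one undetermined multiple of $\e^{-|k|(y-1)}$ remains free.

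Substituting $\hat p$ back into the momentum equations turns each of them into $(\partial_y^2-k^2)(\cdot)=(\text{known data})$, and a second application of the half-line Green's function with decay at infinity recovers $\hat u$ and $\hat v$. The composition of the two kernels $\e^{-|k|\lvert y-z\rvert}$ is where the resonant factors $(z-y)\,\e^{-|k|(y-z)}$ in $\mathbf{N}$ come from, consistently with $(\partial_y^2-k^2)^2$ having $\e^{\pm|k|y}$ and $y\,\e^{\pm|k|y}$ as null solutions; the particular solution assembled this way is $\mathbf{N}\hat\bR$. The remaining homogeneous solutions, constrained by incompressibility, form the two-parameter family $\mathbf{B}\hat\bu_{r}$, and since $U_0|_{y=1}=1$ and $U_1|_{y=1}=0$ one has $\mathbf{B}|_{y=1}=I$; imposing $\hat\bu|_{y=1}=\hat\bu^{*}$ therefore forces $\hat\bu_{r}=\hat\bu^{*}-\mathbf{N}|_{y=1}\hat\bR$. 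Back-substituting this value also fixes the free constant in $\hat p$ and converts it into the boundary term $U_0[2\i k(\hat u_r+\sigma\hat v_r)]$, with $\sigma=\sign k$ entering through $\i k=\i\sigma|k|$; collecting everything gives \eqref{Green-u}--\eqref{Green-p}.

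The main obstacle is bookkeeping rather than conceptual: one must carry the half-line Green's functions of $\partial_y^2-k^2$ and of $(\partial_y^2-k^2)^2$ through the $z<y$ versus $z>y$ decomposition and the various cutoff placements, and keep every sign and every factor $\sigma=\sign k$ straight, so that the outcome matches the operator definitions verbatim; tracking which boundary and jump terms coalesce into the $-\hat Q_{12}$ appearing in \eqref{Green-p} is the fiddliest part. A secondary point worth noting is that several of these steps divide by $k$ (recovering $\hat p$ from a momentum equation, or the stream function from $\hat v$), which is exactly why the statement is flagged as formal: the substantive content — that the apparent singularities at $k=0$ are removable and that $\mathbf{N}$ and $\mathbf{B}$ are bounded between the spaces of Definition~\ref{def:function-spaces-bulk} once the compatibility conditions of Proposition~\ref{prop:compatibility} hold — is deferred to the rest of the section.
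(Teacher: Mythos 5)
Your derivation is correct as a formal computation and reaches the stated formulas, but it follows a genuinely different route from the paper. You eliminate the velocity first, solve the pressure--Poisson equation $(\partial_y^2-k^2)\hat p=-2\i k\,\partial_y\hat R+(\partial_y^2+k^2)\hat S$ with the scalar half-line Green's function, and then recover $\hat u,\hat v$ by a second application of the same kernel, the factors $(z-y)\e^{-\left|k\right|(y-z)}$ in $\mathbf{N}$ arising from the resonance of $(\partial_y^2-k^2)^2$. The paper never forms the pressure equation: it introduces the vorticity $\omega=\partial_x v-\partial_y u$, sets $\gamma=\omega+R$ together with a conjugate variable $\eta$, and rewrites the divergence-free condition, the definition of $\omega$, and the vorticity equation as a first-order $4\times4$ system $\partial_y\hat{\br}=\mathbf{L}\hat{\br}+\hat{\bq}$; the solution is then a Duhamel formula with the spectral projection $\mathbf{P}$ onto the decaying modes, and the $(z-y)$ factors come from the Jordan blocks of $\mathbf{L}$ at the degenerate eigenvalues $\pm\left|k\right|$. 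The pressure is only recovered at the end by back-substitution into the momentum equation, which is where $-\hat Q_{12}$ appears. The paper's formulation buys a systematic bookkeeping of the decaying homogeneous solutions --- they are exactly the image of the boundary data under $\mathbf{1}-\mathbf{P}$, which is how $\mathbf{B}$ and the combination $2\i k(\hat u_{r}+\sigma\hat v_{r})$ arise --- at the price of a $4\times4$ Jordan decomposition; your elimination is more elementary but, as you concede, the composition of two Duhamel integrals and the collection of boundary and jump terms is heavier, and that is where the cutoff placement in $T_{<}$ and $T_{>}^{\pm}$ must be matched term by term. One small slip to fix: the paper's $\sigma$ is $\i\cdot\sign(k)$, not $\sign(k)$, so that $\i k=\sigma\left|k\right|$; with your convention the term $\hat u_{r}+\sigma\hat v_{r}$ in \eqref{eq:Green-p} would come out wrong by a factor of $\i$ in its second summand.
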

\begin{proof}
The vorticity is
\begin{equation}
\omega=\partial_{x}v-\partial_{y}u\,,\label{eq:w-def}
\end{equation}
and the Stokes equation \eqref{Stokes-u} implies the vorticity
equation 
\begin{equation}
\Delta\omega=\left(\partial_{x}^{2}-\partial_{y}^{2}\right)R-2\partial_{x}\partial_{y}S\,.\label{eq:NS-w}
\end{equation}
By defining $\gamma=\omega+R$, the divergence-free condition, \eqref{w-def},
and \eqref{NS-w} can be rewritten as a first order differential
system in $y$,
\begin{align*}
\partial_{y}u & =\partial_{x}v-\gamma+R\,, & \partial_{y}\gamma & =\partial_{x}\eta-2\partial_{x}S\\
\partial_{y}v & =-\partial_{x}u\,, & \partial_{y}\eta & =-\partial_{x}\gamma+2\partial_{x}R\,.
\end{align*}
By taking formally the Fourier transform in the variable $x$, the
divergence-free condition, \eqref{w-def} and \eqref{NS-w}
can be rewritten as a dynamical system where $y$ plays the role of
time,
\begin{align*}
\partial_{y}\hat{\br} & =\mathbf{L}\hat{\br}+\hat{\bq}\,, & \hat{\bu}(k,1) & =\hat{\bu}_{b}\,,
\end{align*}
where
\begin{align*}
\hat{\br} & =\begin{pmatrix}\hat{u}\\
\hat{v}\\
\hat{\gamma}\\
\hat{\eta}
\end{pmatrix}\,, & \mathbf{L} & =\begin{pmatrix}0 & \i k & -1 & 0\\
-\i k & 0 & 0 & 0\\
0 & 0 & 0 & \i k\\
0 & 0 & -\i k & 0
\end{pmatrix}\,, & \hat{\bq} & =\begin{pmatrix}\hat{R}\\
0\\
-2\i k\hat{S}\\
2\i k\hat{R}
\end{pmatrix}\,.
\end{align*}
The eigenvalues of $L$ are given by $\pm\left|k\right|$, and since
we are interested in solutions with zero velocity at infinity, we
have to distinguish between stable and unstable modes, so the solution
is given by
\begin{align*}
\hat{\br}(k,y) & =\int_{1}^{y}\mathbf{P}\e^{\mathbf{L}\left(y-z\right)}\hat{\bq}(k,z)\,\rd z-\int_{y}^{\infty}\left(\mathbf{1}-\mathbf{P}\right)\e^{\mathbf{L}\left(y-z\right)}\hat{\bq}(k,z)\,\rd z+\e^{\mathbf{L}y}\hat{\br}_{s}(k)\\
\hat{\br}_{s}(k) & =\int_{1}^{\infty}\left(\mathbf{1}-\mathbf{P}\right)\e^{\mathbf{L}\left(1-z\right)}\hat{\bq}(k,z)\,\rd z+\hat{\br}_{b}(k)\,,
\end{align*}
where $\mathbf{P}$ is the projection onto stable modes and $\hat{\br}^{*}$
is such that the boundary condition in \eqref{Stokes-u}
is satisfied, 
\begin{align*}
\mathbf{P} & =\frac{1}{2\left|k\right|}\begin{pmatrix}\left|k\right| & -\i k & 1/2 & 0\\
\i k & \left|k\right| & 0 & -1/2\\
0 & 0 & \left|k\right| & -\i k\\
0 & 0 & \i k & \left|k\right|
\end{pmatrix}\,, & \hat{\br}_{b} & =\begin{pmatrix}\hat{u}_{b}\\
\hat{v}_{b}\\
2\left|k\right|\left(\hat{u}_{b}+\sigma\hat{v}_{b}\right)\\
2\i k\left(\hat{u}_{b}+\sigma\hat{v}_{b}\right)
\end{pmatrix}\,,
\end{align*}
where
\[
\sigma=\i\cdot\sign(k)\,.
\]
By using the Jordan decomposition for $\mathbf{L}$ we can explicitly
calculate the exponential and we find that
\begin{align*}
\hat{u} & =\frac{1}{2}\int_{1}^{y}\e^{-\left|k\right|\left(y-z\right)}\left(1-\left|k\right|\left(y-z\right)\right)\hat{R}_{-}(k,z)\,\rd z-\frac{1}{2}\int_{y}^{\infty}\e^{-\left|k\right|\left(z-y\right)}\left(1+\left|k\right|\left(y-z\right)\right)\hat{R}_{+}(k,z)\,\rd z\\
 & +\e^{-\left|k\right|\left(y-1\right)}\left[\left(1-\left|k\right|\left(y-1\right)\right)\hat{u}_{s}-\i k\left(y-1\right)\hat{v}_{s}\right]\,.\\
\hat{v} & =-\frac{1}{2}\int_{1}^{y}\e^{-\left|k\right|\left(y-z\right)}\i k\left(y-z\right)\hat{R}_{-}(k,z)\,\rd z+\frac{1}{2}\int_{y}^{\infty}\e^{-\left|k\right|\left(z-y\right)}\i k\left(y-z\right)\hat{R}_{+}(k,z)\,\rd z\\
 & +\e^{-\left|k\right|\left(y-1\right)}\left[\left(1+\left|k\right|\left(y-1\right)\right)\hat{v}_{s}-\i k\left(y-1\right)\hat{u}_{s}\right]\,,
\end{align*}
where $\hat{R}_{\pm}=\hat{R}\pm\sigma\hat{S}$, and
\begin{align*}
\hat{u}_{s} & =\hat{u}^{*}+\frac{1}{2}\int_{1}^{\infty}\e^{-\left|k\right|\left(z-1\right)}\left(1-\left|k\right|\left(z-1\right)\right)\hat{R}_{+}(k,z)\,\rd z\,,\\
\hat{v}_{s} & =\hat{v}^{*}+\frac{1}{2}\int_{1}^{\infty}\e^{-\left|k\right|\left(z-1\right)}\i k\left(z-1\right)\hat{R}_{+}(k,z)\,\rd z\,.
\end{align*}
By using the operators defined in \defref{operators}, we
can rewrite the integral equations as
\begin{align*}
\hat{u} & =T^{+}\left[\hat{R}-\i k\left(z-y\right)\hat{S}\right]-T^{-}\left[\hat{S}+\i k\left(z-y\right)\hat{R}\right]+U\left[\left(1-\left|k\right|\left(y-1\right)\right)\hat{u}_{r}-\i k\left(y-1\right)\hat{v}_{r}\right]\,,\\
\hat{v} & =T^{+}\left[\i k\left(z-y\right)\hat{R}\right]-T^{-}\left[\i k\left(z-y\right)\hat{S}\right]+U\left[\left(1+\left|k\right|\left(y-1\right)\right)\hat{v}_{r}-\i k\left(y-1\right)\hat{u}_{r}\right]\,,
\end{align*}
where
\begin{align*}
\hat{u}_{r} & =\hat{u}^{*}-B^{+}\left[\hat{R}-\i k\left(z-1\right)\hat{S}\right]+B^{-}\left[\hat{S}+\i k\left(z-1\right)\hat{R}\right]\,,\\
\hat{v}_{r} & =\hat{v}^{*}-B^{+}\left[\i k\left(z-1\right)\hat{R}\right]+B^{-}\left[\i k\left(z-1\right)\hat{S}\right]\,,
\end{align*}
which shows \eqref{Green-u}. Finally, from the Fourier
transform of the Stokes equation \eqref{Stokes-u}, we can
check that the pressure is effectively given by \eqref{Green-p}.
\end{proof}
In order to prove the existence of an $(\alpha,q)$-solution, we have
to estimate the operators used in \propref{Green}:
\begin{lem}
\label{lem:bound-U}For $\alpha>1$, $r\in\mathbb{N}$,
and $p\in\mathbb{R}$, the operator $U_{r}\colon\mathcal{A}_{\alpha,q}\to\mathcal{B}_{\alpha+r,q-r}$
is well-defined and continuous.\end{lem}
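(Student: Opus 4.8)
The plan is to reduce the statement to a single pointwise inequality between the weights $\eta$ and $\mu$, and then to settle that inequality by an elementary case analysis.

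Since $U_{r}$ acts by multiplication, $(U_{r}w)(k,y)=(y-1)^{r}\e^{-\left|k\right|(y-1)}w(k)$, and since $\bigl|w(k)\bigr|\leq\bigl\Vert w;\mathcal{A}_{\alpha,q}\bigr\Vert\,\eta_{\alpha,q}(k)$ for $w\in\mathcal{A}_{\alpha,q}$, one has
\[
\bigl\Vert U_{r}w;\mathcal{B}_{\alpha+r,q-r}\bigr\Vert\;\leq\;\bigl\Vert w;\mathcal{A}_{\alpha,q}\bigr\Vert\;\sup_{k\in X,\;y\geq1}\frac{(y-1)^{r}\e^{-\left|k\right|(y-1)}\,\eta_{\alpha,q}(k)}{\mu_{\alpha+r,q-r}(k,y)}\,,
\]
so it suffices to bound the last supremum by a constant $C=C(\alpha,q,r)$. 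This also settles well-definedness: $(y-1)^{r}\e^{-\left|k\right|(y-1)}$ is continuous on $\mathbb{R}\times[1,\infty)$ and even and real in $k$, so $U_{r}w$ is continuous on the domain attached to $\mathcal{B}_{\alpha+r,q-r}$ and inherits $\overline{(U_{r}w)(k,y)}=(U_{r}w)(-k,y)$ from $w$. Heuristically the index shift is forced: the factor $\e^{-\left|k\right|(y-1)}$ trades each of the $r$ powers of $(y-1)$ for a power of $\left|k\right|^{-1}$, which lowers the decay index $q$ by $r$; and, $\left|k\right|y$ being the dimensionless variable built into $\mathcal{B}$, the index $\alpha$ controlling the behaviour as $\left|k\right|y\to\infty$ is raised by the same amount.

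The two tools for the pointwise bound are: (i) the elementary fact $c_{m}:=\sup_{\tau\geq0}\tau^{m}\e^{-\tau}<\infty$ for $m\geq0$, which yields the crucial estimate $(y-1)^{r}\e^{-\left|k\right|(y-1)}\leq c_{r}\left|k\right|^{-r}$ — this is exactly the mechanism of the index shift — and, after writing $\e^{-\left|k\right|(y-1)}=\e^{-\left|k\right|(y-1)/2}\e^{-\left|k\right|(y-1)/2}$, also $(y-1)^{r}\e^{-\left|k\right|(y-1)}\lesssim_{N}\left|k\right|^{-r}\bigl(1+\left|k\right|(y-1)\bigr)^{-N}$ for any $N$; and (ii) the two-sided size estimates $\eta_{\alpha,q}(k)\asymp\min\bigl(\left|k\right|^{q},\left|k\right|^{-\alpha}\bigr)$ and, for $q\geq0$, $\mu_{\alpha,q}(k,y)\asymp\min\bigl(y^{-q},\left|k\right|^{-\alpha}y^{-\alpha-q}\bigr)$, the correction in $\mu_{\alpha,q}$ for $q<0$ only enlarging the weight for $\left|k\right|y$ small; in particular $\mu_{\alpha+r,q-r}(k,y)\asymp\left|k\right|^{-\alpha-r}y^{-\alpha-q}$ whenever $\left|k\right|y\geq1$, irrespective of the sign of $q-r$.

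With these in hand I would split $X\times[1,\infty)$ into the finitely many regions determined by whether each of $\left|k\right|$, $\left|k\right|y$ and $y-1$ is $\leq1$ or $\geq1$. In each region both weights collapse to monomials $\left|k\right|^{a}y^{b}$ and the required inequality becomes a monomial comparison: on $\left|k\right|y\geq1$ one uses (i) with $\tau=\left|k\right|(y-1)$ (note $\tau\asymp\left|k\right|y$ once $y\geq2$, while $y\asymp1$ for $1\leq y\leq2$, so that the factor $\e^{-\left|k\right|(y-1)}$ is essential even for bounded $y$ when $\left|k\right|$ is large), and on $\left|k\right|y\leq1$ — where automatically $\left|k\right|\leq1$ — one uses only $(y-1)^{r}\e^{-\left|k\right|(y-1)}\leq c_{r}\left|k\right|^{-r}$ together with $\left|k\right|y\leq1$; the arithmetic works out because $q-r\leq q$ and $\alpha>1$. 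The conceptual content is slight; the real work — and the only delicate point — is keeping the estimate uniform down to $k=0$ and $y=1$ across all sign ranges of $q$ and of $q-r$ (recall $q$ is an arbitrary real, so all cases occur). In particular the range $q<-\alpha$ must be checked separately — there $\eta_{\alpha,q}\asymp\left|k\right|^{-\alpha}$ near $0$, the same behaviour as for $q=-\alpha$ — and the inequality survives because in that regime $\mu_{\alpha+r,q-r}$ carries the compensating factor $y^{-\alpha-q}\geq1$. Assembling the finitely many monomial estimates produces the constant $C(\alpha,q,r)$, hence the continuity of $U_{r}\colon\mathcal{A}_{\alpha,q}\to\mathcal{B}_{\alpha+r,q-r}$.
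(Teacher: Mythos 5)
Your proposal is correct and follows essentially the same route as the paper: reduce to a pointwise comparison of the weights, use $\sup_{\tau\geq0}\tau^{r}\e^{-\tau}<\infty$ to trade the $r$ powers of $(y-1)$ for powers of $\left|k\right|^{-1}$, and split into the regions $\left|k\right|y\lessgtr1$ and $y\lessgtr2$. Your treatment is somewhat more explicit than the paper's about the sign ranges of $q$ and $q-r$ (in particular the extra factor in $\mu_{\alpha+r,q-r}$ when $q-r<0$ and the case $q<-\alpha$), but the mechanism is identical.
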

\begin{proof}
It suffices to prove that
\[
\frac{1}{1+\left|k\right|^{\alpha}}\left(\frac{y-1}{y}\right)^{r}\e^{-\left|k\right|\left(y-1\right)}\lesssim\frac{1}{1+\left(\left|k\right|y\right)^{\alpha+r}}\,.
\]
For $\left|k\right|y\leq1$ the result is trivial and for $\left|k\right|y>1$,
we distinguish two cases: for $y>2$, we have
\[
\frac{1}{1+\left|k\right|^{\alpha}}\left(\frac{y-1}{y}\right)^{r}\e^{-\left|k\right|\left(y-1\right)}\leq\e^{-\frac{1}{2}\left|k\right|y}\lesssim\frac{1}{1+\left(\left|k\right|y\right)^{\alpha+r}}\,,
\]
and for $1\leq y\leq2$, 
\[
\frac{1}{1+\left|k\right|^{\alpha}}\left(\frac{y-1}{y}\right)^{r}\e^{-\left|k\right|\left(y-1\right)}\lesssim\frac{1}{1+\left(\left|k\right|y\right)^{\alpha}}\frac{1}{\left(\left|k\right|y\right)^{r}}\left(\left|k\right|\left(y-1\right)\right)^{r}\e^{-\left|k\right|\left(y-1\right)}\lesssim\frac{1}{1+\left(\left|k\right|y\right)^{\alpha+r}}\,.
\]
\end{proof}
\begin{lem}
\label{lem:bound2-U}For $\alpha>1$, $r\in\mathbb{N}$
and $q\geq0$, the operator $U_{r}\colon\mathcal{W}_{\alpha,q}\to\mathcal{U}_{\alpha+r,q-r}$
is well-defined and continuous.\end{lem}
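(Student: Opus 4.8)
The plan is to expand $\partial_y^{j}\partial_k^{i}(U_r w)$ by Leibniz's rule into a finite sum of explicit terms and estimate each of them separately, reducing the whole computation to \lemref{bound-U} together with one elementary observation: multiplication by $\left|k\right|$ maps $\mathcal{B}_{\alpha^{\prime},q^{\prime}}$ continuously into $\mathcal{B}_{\alpha^{\prime}-1,q^{\prime}+1}$ for every $\alpha^{\prime}\geq1$ and every real $q^{\prime}$, which follows at once from $\dfrac{t}{1+t^{\alpha^{\prime}}}\lesssim\dfrac{1}{1+t^{\alpha^{\prime}-1}}$ applied with $t=\left|k\right|y$. Throughout I may assume $q\geq r$, so that the target space $\mathcal{U}_{\alpha+r,q-r}$ is defined.

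Since for $k\neq0$ each of the operations $\partial_k$ and $\partial_y$ returns an expression of the same algebraic shape — a power of $(y-1)$ times a power of $\left|k\right|$ times $\e^{-\left|k\right|(y-1)}$, possibly times $\sign k$ and a $k$-derivative of $w$ — iterating shows that $\partial_y^{j}\partial_k^{i}(U_r w)$ is a finite linear combination of terms
\[
(y-1)^{\rho}\,\left|k\right|^{\tau}\,\left(\sign k\right)^{\ell}\,\e^{-\left|k\right|(y-1)}\,\partial_k^{\,i-\ell}w(k)\,,
\]
with $0\leq\ell\leq i$ and $\rho,\tau\in\mathbb{N}$. Tracking the powers — each of the $j$ applications of $\partial_y$ either lowers a power of $(y-1)$ or produces a factor $-\left|k\right|$, while the $\ell$ derivatives $\partial_k$ that fall on the $\left|k\right|^{\bullet}\e^{-\left|k\right|(y-1)}$ part convert powers of $\left|k\right|$ into powers of $(y-1)$ and create the factor $(\sign k)^{\ell}$ — yields the single identity that matters, namely $\rho-\tau=r+\ell-j$.

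Fix one such term. Because $i\leq\lfloor q-r\rfloor\leq\lfloor q\rfloor$, the factor $\partial_k^{\,i-\ell}w$ is controlled by the $\mathcal{W}_{\alpha,q}$-norm: $\partial_k^{\,i-\ell}w\in\mathcal{A}_{\alpha+1,\,q-1-i+\ell}$ with norm $\lesssim\bigl\Vert w;\mathcal{W}_{\alpha,q}\bigr\Vert$. Applying \lemref{bound-U} with the indices $(\alpha+1,\,q-1-i+\ell)$ and the operator $U_{\rho}$ gives
\[
\Bigl|(y-1)^{\rho}\e^{-\left|k\right|(y-1)}\partial_k^{\,i-\ell}w(k)\Bigr|\;\lesssim\;\bigl\Vert w;\mathcal{W}_{\alpha,q}\bigr\Vert\;\mu_{\alpha+1+\rho,\;q-1-i+\ell-\rho}(k,y)\,,
\]
so that, after multiplying by $\left|k\right|^{\tau}$ and invoking the $\left|k\right|$-shift $\tau$ times, the term lies in $\mathcal{B}_{\alpha+1+\rho-\tau,\;q-1-i+\ell-\rho+\tau}$ with the same norm bound. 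Using $\rho-\tau=r+\ell-j$, the first index equals $(\alpha+r+1-j)+\ell$ and the second equals $q-r-1-i+j$; since $j\leq\lfloor\alpha+r\rfloor\leq\alpha+r$, the first index stays $\geq1$ through all $\tau$ steps, so the $\left|k\right|$-shift is legitimate, and raising the first index by $\ell$ only shrinks the space. Hence each term belongs to $\mathcal{B}_{\alpha+r+1-j,\;q-r-1-i+j}$ with norm $\lesssim\bigl\Vert w;\mathcal{W}_{\alpha,q}\bigr\Vert$; the factor $(\sign k)^{\ell}$ is irrelevant since only $\bigl|\,\cdot\,\bigr|$ enters the weighted suprema. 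Summing over the finitely many terms and over $0\leq i\leq\lfloor q-r\rfloor$, $0\leq j\leq\lfloor\alpha+r\rfloor$ gives $\bigl\Vert U_r w;\mathcal{U}_{\alpha+r,q-r}\bigr\Vert\lesssim\bigl\Vert w;\mathcal{W}_{\alpha,q}\bigr\Vert$; the remaining membership $U_r w\in\mathcal{B}_{\alpha+r+1,q-r-1}$, together with the reality relation (inherited because $(y-1)^{r}\e^{-\left|k\right|(y-1)}$ is real and even in $k$), is exactly \lemref{bound-U} applied to $w\in\mathcal{A}_{\alpha+1,q-1}$.

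The only genuine difficulty is this index bookkeeping: tracking through the Leibniz expansion how the surplus powers of $(y-1)$ and of $\left|k\right|$ are distributed, and checking that the first index never drops below $1$ when $\left|k\right|^{\tau}$ is absorbed. Both reduce to the identity $\rho-\tau=r+\ell-j$ and to the constraint $j\leq\lfloor\alpha+r\rfloor$; everything else is a routine repetition of the estimates already carried out in the proof of \lemref{bound-U}.
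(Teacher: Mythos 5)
Your proof is correct and follows essentially the same route as the paper: the paper uses the one-step identities $\partial_{y}U_{r}=rU_{r-1}-\left|k\right|U_{r}$ and $\partial_{k}\left(U_{r}w\right)=U_{r}\partial_{k}w+\i\sigma U_{r+1}w$ and recurses on the number of derivatives, which is exactly your Leibniz expansion unrolled, with each term controlled by \lemref{bound-U} and the $\left|k\right|$-shift. Your explicit bookkeeping via $\rho-\tau=r+\ell-j$ is a more detailed rendering of the same argument, and your remark that one should read the statement with $q\geq r$ (so that $\mathcal{U}_{\alpha+r,q-r}$ is defined) is a fair observation that the paper leaves implicit.
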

\begin{proof}
First of all, we have
\[
\partial_{y}U_{r}=U_{r-1}+\left|k\right|U_{r}\,,
\]
so $\partial_{y}U_{r}:\mathcal{A}_{\alpha+1,q-1}\to\mathcal{B}_{\alpha+r,q-r}$.
For $q>0$, we have 
\[
\partial_{k}\left(U_{r}w\right)=U_{r}\partial_{k}w+\i\sigma U_{r+1}w\,,
\]
so by \lemref{bound-U} we obtain that $\partial_{k}\left(U_{r}w\right)\in\mathcal{B}_{\alpha+r+1,q-r-2}$.
The result now follows by a recursion on the number of derivatives.\end{proof}
\begin{lem}
For $\alpha\geq0$ and $q\geq0$, the operator $T_{<}\colon\mathcal{B}_{\alpha,q}\to\mathcal{B}_{\alpha+1,q-1}$
is well-defined and continuous.\end{lem}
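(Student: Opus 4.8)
The plan is to reduce the statement, exactly as in the proof of \lemref{bound-U}, to a single pointwise weighted bound. Put $C=\bigl\Vert w;\mathcal{B}_{\alpha,q}\bigr\Vert$, so that $\bigl|w(k,z)\bigr|\le C\,\mu_{\alpha,q}(k,z)\le C\,|k|^{-\alpha}z^{-(\alpha+q)}$ for $q\ge0$ (using $1+t^{\alpha}\ge t^{\alpha}$). The goal is then to establish
\[
\bigl|(T_{<}w)(k,y)\bigr|\;\lesssim\;C\,\mu_{\alpha+1,q-1}(k,y)\,,
\]
for all $k\neq0$ and $y\ge1$ (the value at $k=0$ being zero, since $\chi(0)=1$); the continuity of $T_{<}w$ and the symmetry $\overline{(T_{<}w)(k,y)}=(T_{<}w)(-k,y)$ will follow from the corresponding properties of $w$ and the standard continuity theorems for parameter integrals, since the kernel $\e^{-|k|(y-z)}\bigl(1-\chi(|k|(z-1))\bigr)$ is continuous, bounded by $1$, and even in $k$.

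The structural fact I would exploit is that $1-\chi(|k|(z-1))$ vanishes for $z\le1+1/|k|$, so the integral defining $T_{<}w$ is effectively taken over $z\in[1+1/|k|,y]$; in particular $(T_{<}w)(k,y)=0$ unless $y>1+1/|k|$, which forces $|k|y>1$. It therefore suffices to bound $(T_{<}w)(k,y)$ on $\{|k|y>1\}$, where one reads off from the definition of $\mu$ that $\mu_{\alpha+1,q-1}(k,y)\asymp\bigl(|k|^{\alpha+1}y^{\alpha+q}\bigr)^{-1}$ (the extra factor present when $q-1<0$ lies between two positive constants on this set). Inserting the bound on $w$, and noting that merely estimating $\e^{-|k|(y-z)}\le1$ would be too wasteful near $z=y$, the task reduces to
\[
\int_{1+1/|k|}^{y}\e^{-|k|(y-z)}\,z^{-(\alpha+q)}\,\rd z\;\lesssim\;\frac{1}{|k|\,y^{\alpha+q}}\,,\qquad|k|y>1,\ \ y>1+1/|k|\,.
\]
I would prove this by splitting the integral at $z=y/2$: on $[y/2,y]$ one has $z^{-(\alpha+q)}\asymp y^{-(\alpha+q)}$ and $\int_{y/2}^{y}\e^{-|k|(y-z)}\,\rd z\le|k|^{-1}$, giving the right-hand side directly; on $[1+1/|k|,\,y/2]$ one uses the uniform gain $\e^{-|k|(y-z)}\le\e^{-|k|y/2}$, evaluates $\int_{1+1/|k|}^{y/2}z^{-(\alpha+q)}\,\rd z$ elementarily, and absorbs it into $|k|^{-1}y^{-(\alpha+q)}$ via the boundedness of $t\mapsto t^{M}\e^{-t}$ on $[1,\infty)$ for each fixed $M\ge0$, applied at $t=|k|y/2>1$.

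The one step that needs genuine care rather than routine computation is this low-$z$ piece when $\alpha+q\ge1$: there $\int_{1+1/|k|}^{y/2}z^{-(\alpha+q)}\,\rd z$ is comparable to $\log(|k|y)$ when $\alpha+q=1$ and to $|k|^{\alpha+q-1}$ when $\alpha+q>1$, and it is precisely the lower endpoint $1+1/|k|>1/|k|$ — i.e., the cut-off $\chi$ built into $T_{<}$ — that keeps these quantities finite. These potentially growing factors are still swallowed by $\e^{-|k|y/2}$, because both $(|k|y)^{\alpha+q}\e^{-|k|y/2}$ and $(|k|y)\log(|k|y)\,\e^{-|k|y/2}$ are bounded for $|k|y>1$. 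Combining the two pieces gives the displayed integral estimate, hence the pointwise bound, and therefore $T_{<}\colon\mathcal{B}_{\alpha,q}\to\mathcal{B}_{\alpha+1,q-1}$ is well-defined and continuous, with operator norm bounded by a constant depending only on $\alpha$ and $q$.
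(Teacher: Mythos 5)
Your proof is correct and follows essentially the same route as the paper's: both use the cut-off to restrict to $\left|k\right|\left(y-1\right)>1$ (hence $\left|k\right|y>1$), split the integral at the midpoint, gain the factor $\left|k\right|^{-1}$ from the exponential on the near piece, and absorb the far piece into the exponential decay $\e^{-\left|k\right|y/2}$. The only cosmetic difference is that you compute $\int z^{-(\alpha+q)}\,\rd z$ explicitly and absorb the resulting powers (and the borderline logarithm) by hand, where the paper bounds that piece by the interval length times $\e^{-\left|k\right|\left(y-1\right)/2}$ and invokes Lemma~\ref{lem:bound-U}.
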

\begin{proof}
Due to the cut-off function, the integral vanishes for $\left|k\right|\left(y-1\right)\leq1$,
and for $\left|k\right|y\geq1+\left|k\right|$, we split the integral:
\begin{align*}
\left(T_{<}\mu_{\alpha,q}\right)(k,y) & \leq\int_{1}^{\frac{y+1}{2}}\e^{-\left|k\right|\left(y-z\right)}\left(1-\chi\left(\left|k\right|\left(z-1\right)\right)\right)\mu_{\alpha,q}(k,z)\,\rd z+\int_{\frac{y+1}{2}}^{y}\e^{-\left|k\right|\left(y-z\right)}\mu_{\alpha,q}(k,z)\,\rd z\\
 & \lesssim\e^{-\left|k\right|\left(y-1\right)/2}\int_{1}^{\frac{y+1}{2}}\eta_{\alpha,q}(k)\,\rd z+\mu_{\alpha,q}(k,y)\int_{\frac{y+1}{2}}^{y}\e^{-\left|k\right|\left(y-z\right)}\,\rd z\\
 & \lesssim\e^{-\left|k\right|\left(y-1\right)/2}\left(y-1\right)\eta_{\alpha,q}(k)+\frac{1}{\left|k\right|y}\mu_{\alpha,q-1}\left(k,y\right)\lesssim\mu_{\alpha+1,q-1}(k,y)\,,
\end{align*}
where for the last step we apply \lemref{bound-U}.\end{proof}
\begin{lem}
\label{lem:bound-T}For all $\alpha>1$ and $q\geq0$ with
$q\neq1$, the operators $T_{>}^{\pm}\colon\mathcal{B}_{\alpha,q}\to\mathcal{B}_{\alpha+1,q-1}$
are well-defined and continuous.\end{lem}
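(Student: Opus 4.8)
The plan is to reduce the claimed boundedness to one pointwise inequality between weights and then dispatch it by a case analysis in $|k|y$ and in $|k|$. Since $T_{>}^{\pm}$ is linear and $\chi\ge0$, and since every $w\in\mathcal{B}_{\alpha,q}$ obeys $|w(k,z)|\le\bigl\Vert w;\mathcal{B}_{\alpha,q}\bigr\Vert\,\mu_{\alpha,q}(k,z)$ on $X\times[1,\infty)$, it suffices to prove
\[
\int_{y}^{\infty}\Bigl(\e^{-|k|(z-y)}+\chi\bigl(|k|(z-1)\bigr)\e^{|k|(z-y)}\Bigr)\mu_{\alpha,q}(k,z)\,\rd z\lesssim\mu_{\alpha+1,q-1}(k,y)
\]
uniformly on $X\times[1,\infty)$. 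Well-definedness of the integral and continuity in $(k,y)$ follow from dominated convergence, using that for $k\ne0$ the first integrand decays exponentially in $z$ and the second has compact support in $z$, while at $k=0$ — which lies in the target domain only when $q>1$ — the weight reduces to the integrable $z^{-q}$; the reality condition is inherited from the evenness of the kernel in $k$. Write $I_{1}$ and $I_{2}$ for the two contributions.

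For $I_{2}$ I would use that the cut-off forces $1\le z\le1+2/|k|$, so $I_{2}=0$ unless $|k|(y-1)\le2$, and on that support $z-y\le z-1\le2/|k|$, whence $\e^{|k|(z-y)}\le\e^{2}$ and $I_{2}\lesssim\int_{y}^{1+2/|k|}\mu_{\alpha,q}(k,z)\,\rd z$. One then splits according to $|k|\le1$ or $|k|>1$. If $|k|\le1$, the whole interval lies where $|k|z\lesssim1$ (and $|k|y\lesssim1$), so $\mu_{\alpha,q}(k,z)\asymp z^{-q}$ and $\int_{y}^{1+2/|k|}z^{-q}\,\rd z$ is $\lesssim y^{1-q}$ when $q>1$ and $\lesssim|k|^{q-1}$ when $q<1$, which in both regimes is $\lesssim\mu_{\alpha+1,q-1}(k,y)$; the excluded case $q=1$ produces instead a logarithm of $|k|y$, which no power weight dominates. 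If $|k|>1$, then $y,z\in[1,3]$, $\mu_{\alpha,q}(k,z)\lesssim|k|^{-\alpha}$ (using $q\ge0$), the interval has length $\le2/|k|$, so $I_{2}\lesssim|k|^{-\alpha-1}\asymp\mu_{\alpha+1,q-1}(k,y)$.

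For $I_{1}$ I would split the $z$-integration at $z=1/|k|$. When $|k|y\le1$: on $y\le z\le1/|k|$ the exponential is bounded and $\mu_{\alpha,q}(k,z)\asymp z^{-q}$, contributing $\int_{y}^{1/|k|}z^{-q}\,\rd z$, which is $\lesssim y^{1-q}$ (lower endpoint) for $q>1$ and $\lesssim|k|^{q-1}$ (upper endpoint) for $q<1$; on $z\ge1/|k|$ one has $\e^{-|k|(z-y)}\lesssim\e^{-|k|z}$ and $\mu_{\alpha,q}(k,z)\lesssim|k|^{-\alpha}z^{-q-\alpha}$, and the substitution $s=|k|z$ bounds that tail by $|k|^{q-1}$; summing, $I_{1}\lesssim\mu_{\alpha+1,q-1}(k,y)$ (for $|k|y\le1$ this weight is $\asymp y^{1-q}$ if $q>1$ and $\asymp|k|^{q-1}$ if $q<1$, the latter thanks to the factor $1+(|k|y)^{q-1}$ in its definition). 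When $|k|y>1$: $\mu_{\alpha,q}(k,z)\lesssim|k|^{-\alpha}z^{-q-\alpha}$ for all $z\ge y$, so using $q\ge0$ to bound $z^{-q-\alpha}\le y^{-q-\alpha}$ throughout $z\ge y$ gives $I_{1}\lesssim|k|^{-\alpha}y^{-q-\alpha}\int_{y}^{\infty}\e^{-|k|(z-y)}\,\rd z=|k|^{-\alpha-1}y^{-q-\alpha}\asymp\mu_{\alpha+1,q-1}(k,y)$.

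I expect the main obstacle to be the bookkeeping of this case split, and in particular getting the behavior near $k=0$ right: for $q<1$ the integral of $z^{-q}$ over $[y,1/|k|]$ is governed by the endpoint $1/|k|$ and produces the singular factor $|k|^{q-1}$, which must be matched against the $1+(|k|y)^{q-1}$ piece of $\mu_{\alpha+1,q-1}$ — this is also why the target index $q-1$ is negative there and the domain is $X=\mathbb{R}\setminus\{0\}$. The hypothesis $q\ne1$ is the essential one: it removes exactly the logarithmic endpoint contribution, which occurs in both $I_{1}$ and $I_{2}$ and is not controlled by any power weight; $\alpha>1$ plays the same auxiliary role as in the preceding lemmas. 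Finally, the sign $\pm$, and the factor $\sigma$ that enters when $T_{>}^{\pm}$ are later combined into $T^{\pm}$, are immaterial here since the whole estimate is carried out in absolute value.
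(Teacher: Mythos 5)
Your proof is correct and follows essentially the same route as the paper: the cut-off support is used to tame the growing exponential, the weight is integrated directly with a case split on $\left|k\right|y$ (monotonicity of $\mu_{\alpha,q}$ plus $\int\e^{-\left|k\right|\left(z-y\right)}\rd z=1/\left|k\right|$ for $\left|k\right|y>1$, a scaling substitution for $\left|k\right|y<1$), and $q\neq1$ is precisely what excludes the logarithmic endpoint. The paper merely compresses your $I_{1}$ and $I_{2}$ into one integral by bounding $\chi\bigl(\left|k\right|\left(z-1\right)\bigr)\e^{\left|k\right|\left(z-y\right)}\lesssim\e^{-\left|k\right|\left(z-y\right)}$ on the support of $\chi$, so your finer decomposition adds detail but no new idea.
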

\begin{proof}
For $\left|k\right|y>1$, we have
\begin{align*}
\left(T_{>}^{\pm}\mu_{\alpha,q}\right)(k,y) & \lesssim\int_{y}^{\infty}\e^{-\left|k\right|\left(z-y\right)}\mu_{\alpha,q}(k,z)\,\rd z\leq\mu_{\alpha,q}\left(k,y\right)\int_{y}^{\infty}\e^{\left|k\right|\left(y-z\right)}\,\rd z\\
 & \lesssim\frac{1}{\left|k\right|y}\mu_{\alpha,q-1}(k,y)\lesssim\mu_{\alpha+1,q-1}(k,y)\,,
\end{align*}
and for $\left|k\right|y<1$, since $p\neq q$, we have
\[
\left(T_{>}^{\pm}\mu_{\alpha,q}\right)(k,y)\lesssim\int_{y}^{\infty}\mu_{\alpha,q}(k,z)\,\rd z\leq\left|k\right|^{q-1}\int_{\left|k\right|y}^{\infty}\frac{1}{u^{q}}\frac{1}{1+u^{\alpha}}\rd u\lesssim\mu_{\alpha+1,q-1}(k,y)\,.
\]
\end{proof}
\begin{lem}
\label{lem:bound2-T}For all $\alpha>0$ and $q\geq0$ with
$q\notin\mathbb{N}$, the operators $T_{\pm}\colon\mathcal{R}_{\alpha,q+1}\to\mathcal{U}_{\alpha+1,q}$
are well-defined and continuous.\end{lem}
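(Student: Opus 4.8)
The plan is to exploit the decomposition $T^{+}=T_{<}-T_{>}^{+}$ and $T^{-}=\sigma T_{<}+\sigma T_{>}^{-}$ together with the bounds already established for $T_{<}$ and $T_{>}^{\pm}$ on the $\mathcal{B}$-scale. The zeroth-order part of the $\mathcal{U}_{\alpha+1,q}$-norm is then immediate: if $\hat{w}\in\mathcal{R}_{\alpha,q+1}\subset\mathcal{B}_{\alpha+1,q}$ then, since $q\notin\mathbb{N}$ excludes the value $q=1$ forbidden in \lemref{bound-T}, the bounds for $T_{<}$ and for $T_{>}^{\pm}$ give $T^{\pm}\hat{w}\in\mathcal{B}_{\alpha+2,q-1}$, which is the base space of $\mathcal{U}_{\alpha+1,q}$. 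What remains is to control every derivative $\partial_{y}^{j}\partial_{k}^{i}(T^{\pm}\hat{w})$ occurring in that norm and to check it lands in $\mathcal{B}_{\alpha+2-j,q-1-i+j}$.

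The first step I would carry out is the treatment of the $y$-derivatives by differentiation under the integral sign. The boundary terms at $z=y$ produced by $T_{<}$ and $T_{>}^{\pm}$ recombine through $\tfrac{1}{2}(1-\chi)+\tfrac{1}{2}(1+\chi)=1$, and one obtains the commutation identities
\[
\partial_{y}T^{+}=\mathrm{Id}+\left|k\right|\sigma T^{-}\,,\qquad\partial_{y}T^{-}=-\left|k\right|\sigma T^{+}\,,
\]
where $\sigma^{2}=-1$ was used. Iterating, $\partial_{y}^{j}(T^{\pm}\hat{w})$ becomes a finite combination of $\left|k\right|^{j}T^{\pm}\hat{w}$ and of terms $\left|k\right|^{2\ell}\partial_{y}^{j-1-2\ell}\hat{w}$; I would place each of these in the required space by combining the elementary inclusion $\left|k\right|\,\mathcal{B}_{\alpha',q'}\subset\mathcal{B}_{\alpha'-1,q'+1}$ with the base-space bound on $T^{\pm}\hat{w}$ just obtained and with the control that $\Vert\hat{w};\mathcal{R}_{\alpha,q+1}\Vert$ gives on the $y$-derivatives of $\hat{w}$.

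Next I would handle the $k$-derivatives. Each $\partial_{k}$ acting on the integrand of $T_{<}$ or $T_{>}^{\pm}$ produces three kinds of contribution: $T^{\pm}$ applied to $\partial_{k}\hat{w}$, whose $\mathcal{B}$-norms are again controlled by $\Vert\hat{w};\mathcal{R}_{\alpha,q+1}\Vert$ and which is dealt with by induction on $i$; weighted operators $\widehat{T}_{m}^{\pm}$ whose kernel carries an additional factor $(z-y)^{m}$ arising from $\partial_{k}^{m}\e^{\mp\left|k\right|(z-y)}$; and ``cut-off commutator'' terms in which $\partial_{k}$ hits $\chi(\left|k\right|(z-1))$, yielding a factor $(z-1)\chi^{(b)}(\left|k\right|(z-1))$ supported on $\left|k\right|(z-1)\in[1,2]$, hence confined to a $z$-window of width $\sim\left|k\right|^{-1}$ near the boundary. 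The weighted operators I would estimate by $t^{m}\e^{-t}\lesssim\e^{-t/2}$, which produces a factor $\left|k\right|^{-m}$; this factor does no harm because the cut-off $1-\chi(\left|k\right|(z-1))$ (respectively the localization of the commutator terms) switches the relevant integrals off precisely in the regime $\left|k\right|y\lesssim1$, so that on the effective domain $\left|k\right|^{-1}\sim y/(\left|k\right|y)$ is absorbed by trading one power of $\left|k\right|y$-decay for one power of $y$-growth, giving the net shift $(\alpha,q)\mapsto(\alpha+1+m,q-1-m)$ for $\widehat{T}_{m}^{\pm}$. Since the family $\{T^{\pm},\widehat{T}_{m}^{\pm}\}$ is closed under both $\partial_{y}$ (with $\partial_{y}\widehat{T}_{m}^{\pm}$ expressed through $\widehat{T}_{m-1}^{\pm}$, $\widehat{T}_{m}^{\pm}$ and the identity) and $\partial_{k}$, the mixed derivatives $\partial_{y}^{j}\partial_{k}^{i}(T^{\pm}\hat{w})$ follow by iterating these rules, and the proof reduces to bookkeeping of the index shifts against the summands of the $\mathcal{U}_{\alpha+1,q}$-norm.

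The hard part will be this near-$k=0$ analysis of the weighted operators: naively each extra factor $(z-y)$ costs $\left|k\right|^{-1}$, which no $\mathcal{B}$-weight dominates near $k=0$, and the whole point is that the cut-off built into $T_{<}$ and $T_{>}^{\pm}$ turns these operators off in exactly the dangerous region. Keeping track of the cut-off commutator terms, which proliferate with the order of the $k$-derivative and involve the higher derivatives $\chi^{(b)}$, is the most tedious ingredient, and the hypothesis $q\notin\mathbb{N}$ is needed throughout to avoid the logarithmically divergent borderline cases already met for $T_{>}^{\pm}$ in \lemref{bound-T}. Everything else is the same kind of splitting of the defining integrals as in the proofs of the preceding lemmas.
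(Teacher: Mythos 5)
Your setup---the decomposition into $T_{<}$ and $T_{>}^{\pm}$, the base-space bound, and the commutation identities for $\partial_{y}$ with the subsequent recursion---coincides with the paper's argument and is fine. The gap is in your treatment of the $k$-derivatives, specifically of the weighted operators whose kernels carry the factor $(z-y)^{m}$. You propose to estimate these via $t^{m}\e^{-t}\lesssim\e^{-t/2}$, accept the resulting factor $\left|k\right|^{-m}$, and argue that it is harmless because the cut-offs switch the operators off in the region $\left|k\right|y\lesssim1$. That is true for $T_{<}$, whose integrand carries $1-\chi(\left|k\right|(z-1))$ and hence vanishes unless $\left|k\right|(z-1)\geq1$ (which, since $z\leq y$ there, forces $\left|k\right|y\geq1$), but it is false for $T_{>}^{\pm}$: the decaying part $\e^{-\left|k\right|(z-y)}$ of its kernel carries no cut-off at all, and the growing part $\chi(\left|k\right|(z-1))\e^{\left|k\right|(z-y)}$ is supported precisely \emph{in} the small-$\left|k\right|(z-1)$ region, not away from it. There the factor $\left|k\right|^{-m}$ is fatal: for $q\in(1,2)$ and $m=1$ your bound gives $\left|k\right|^{-1}\mu_{\alpha+2,q-1}\sim\left|k\right|^{-1}y^{1-q}=\left|k\right|^{q-2}\left(\left|k\right|y\right)^{1-q}$, which exceeds the target weight $\mu_{\alpha+2,q-2}\sim\left|k\right|^{q-2}$ by the divergent factor $\left(\left|k\right|y\right)^{1-q}$ as $\left|k\right|y\to0$. (Your claimed net shift $(\alpha,q)\mapsto(\alpha+1+m,q-1-m)$, with a gain of $m$ in the first index, is likewise only available where $\left|k\right|y\geq1$.)

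The repair, which is what the paper does, is never to convert $(z-y)$ into an inverse power of $k$: write $z-y=(z-1)-(y-1)$. The $(y-1)$ piece leaves the integral and simply multiplies $T^{\pm}w\in\mathcal{B}_{\alpha+2,q-1}$ by at most $y$, landing in $\mathcal{B}_{\alpha+2,q-2}$; the $(z-1)$ piece stays inside and is absorbed into the weight of the integrand, since $(z-1)\mu_{\alpha+1,q}(k,z)\leq z\,\mu_{\alpha+1,q}(k,z)=\mu_{\alpha+1,q-1}(k,z)$, after which \lemref{bound-T} applied with the index $q-1$ gives $\mathcal{B}_{\alpha+2,q-2}$ again. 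The derivative of the cut-off is conveniently bundled with this $(z-1)$-weighted term, replacing $\chi$ by $\chi+\chi^{\prime}$, which is again an admissible cut-off. This is also where $q\notin\mathbb{N}$ is really used: each $k$-derivative lowers the index by one, and \lemref{bound-T} must be applied with $q-i\neq1$. With this substitution in place of the $t^{m}\e^{-t}$ device, the rest of your bookkeeping goes through.
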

\begin{proof}
First, by using \lemref{bound-T}, we have $T_{\pm}\colon\mathcal{B}_{\alpha+1,q}\to\mathcal{B}_{\alpha+2,q-1}$.
By take the derivative with respect to $y$, we get
\begin{align*}
\partial_{y}\left(T_{<}q\right)(k,y) & =\frac{1}{2}\left(1-\chi\left(\left|k\right|\left(y-1\right)\right)\right)q(k,y)-\left|k\right|\left(T_{<}q\right)(k,y)\,,\\
\partial_{y}\left(T_{>}^{\pm}q\right)(k,y) & =\frac{-1}{2}\left(1\pm\chi\left(\left|k\right|\left(y-1\right)\right)\right)q(k,y)+\left|k\right|\left(T_{>}^{\mp}q\right)(k,y)\,,
\end{align*}
so the time-derivative of the operators are
\begin{align*}
\partial_{y}T^{+} & =\frac{1}{2}+\i kT^{-}\,, & \partial_{y}T^{-} & =-\i kT^{+}\,,
\end{align*}
so $\partial_{y}T_{\pm}\colon\mathcal{B}_{\alpha+1,q}\to\mathcal{B}_{\alpha+1,q}$.
Since the integrand of $T_{<}^{-}$ vanishes at $k=0$, we have
\begin{align*}
\partial_{k}\left(T_{<}w\right) & =T_{<}\partial_{k}w+\i\left(y-1\right)\sigma T_{<}w-\i\sigma\tilde{T}_{<}\left(z-1\right)w\,,\\
\partial_{k}\left(T_{>}^{+}w\right) & =T_{>}^{+}\partial_{k}w-\i\left(y-1\right)\sigma T_{>}^{-}w+\i\sigma\tilde{T}_{>}^{-}\left(z-1\right)w\,,\\
\partial_{k}\left(\sigma T_{>}^{-}w\right) & =\sigma T_{>}^{-}\partial_{k}w+\i\left(y-1\right)T_{>}^{+}w-\i\tilde{T}_{>}^{+}\left(z-1\right)w\,,
\end{align*}
where a tilde over an operator denotes the same operator where $\chi$
is replaced by $\chi+\chi^{\prime}$ which is also a cut-off function
satisfying \eqref{cutoff}. Therefore,
\begin{align*}
\partial_{k}\left(T_{+}w\right) & =T_{+}\partial_{k}w+\i\left(y-1\right)T_{-}w-\i\sigma\tilde{T}_{-}\left(z-1\right)w\,,\\
\partial_{k}\left(T_{-}w\right) & =T_{-}\partial_{k}w-\i\left(y-1\right)T_{+}w+\i\tilde{T}_{+}\left(z-1\right)w\,,
\end{align*}
and by using the previously shown properties on the operators $T_{\pm}$,
we obtain that $\partial_{k}\left(T_{\pm}w\right)\in\mathcal{B}_{\alpha+2,q-2}$.
By recursion on the number of derivatives we obtain $T_{\pm}w\in\mathcal{U}_{\alpha+1,q}$.
\end{proof}
We can now apply these lemmas to prove the existence of $(\alpha,q)$-solutions:
\begin{proof}[Proof of \thmref{Stokes}]
By applying \lemref{bound2-U}, we have $\mathbf{B}\colon\mathcal{W}_{\alpha,p}\to\mathcal{U}_{\alpha,p}$,
and therefore, if $\hat{\mathbf{Q}}=\bzero$ and $\hat{\bu}^{*}\in\mathcal{W}_{\alpha,q}$,
we obtain that $\left(\hat{\bu},\hat{p}\right)\in\mathcal{U}_{\alpha,q}\times\mathcal{P}_{\alpha-1,q+1}$.
By applying \lemref{bound2-T}, noting that
\[
z-y=\left(z-1\right)-\left(y-1\right)\,,
\]
and bounding each resulting term separately, we obtain that $\mathbf{N}\colon\mathcal{R}_{\alpha,q+1}\to\mathcal{U}_{\alpha,q}$,
for $q>1$ with $q\notin\mathbb{N}$. In view of \remref{on-boundary},
we have $\left.\mathbf{N}\right|_{y=1}\colon\mathcal{R}_{\alpha,q+1}\to\mathcal{T}_{\alpha,q}$.
In case $\hat{\bu}_{r}\in\mathcal{W}_{\alpha,q}$, since $\mathbf{B}\colon\mathcal{W}_{\alpha,p}\to\mathcal{U}_{\alpha,p}$,
we have $\left(\hat{\bu},\hat{p}\right)\in\mathcal{U}_{\alpha,q}\times\mathcal{P}_{\alpha-1,q+1}$.
\end{proof}
The deduction of the compatibility conditions is now straightforward:
\begin{proof}[Proof of \propref{compatibility}]
Since $\hat{\bu}_{r}\in\mathcal{T}_{\alpha,p}$, we use the characterization
of $\mathcal{W}_{\alpha,p}$ in terms of elements of $\mathcal{T}_{\alpha,p}$
provided in \lemref{spaces-TtoU}. The first compatibility
condition is $\hat{\bu}_{r}(0)=\bzero$, and the second $\partial_{k}\hat{\bu}_{r}(0)=\bzero$.
By explicit calculations, we obtain the claimed conditions.
\end{proof}

\section{Strong solutions to the Navier-Stokes equation\label{sec:ns}}

The Navier-Stokes equation in the half-space can be written as the
Stokes system \eqref{Stokes-u} with $\mathbf{Q}=\bu\otimes\bu$,
and we are going to look for solutions of the form $\hat{\bu}=\hat{\bu}_{\Phi}^{\sigma}+\hat{\bu}_{1}$
and perform a fixed point argument on $\hat{\bu}_{1}\in\mathcal{U}_{\alpha,q}$.
First of all, the Jeffery-Hamel solution \eqref{JH-xy}
at fixed values of $y$ and large values of $\pm s$, where $s=x/y$,
is
\begin{align*}
u_{\Phi}^{\sigma}(x,y) & \approx\frac{-1}{y\, s^{2}}f^{\prime}\!\left(\pm\tfrac{\pi}{2}\right)\,, & v_{\Phi}^{\sigma}(x,y) & \approx\frac{-1}{y\, s^{3}}f^{\prime}\!\left(\pm\tfrac{\pi}{2}\right)\,,
\end{align*}
so that its Fourier transforms satisfies $\partial_{y}^{i}\hat{\bu}_{\Phi}^{\sigma}\in\mathcal{B}_{\alpha,i}$,
so
\begin{equation}
\hat{\bu}_{\Phi}^{\sigma}\in\mathcal{U}_{\alpha,0}\,,\label{eq:space-JH}
\end{equation}
for arbitrary $\alpha>1$. In order to treat the nonlinearity $\bu_{\Phi}^{\sigma}\otimes\bu_{1}$,
we need the following proposition concerning the convolution:
\begin{prop}
\label{prop:convolution}For $\alpha>1$ and $q\geq1$ the
convolution $*\colon\mathcal{U}_{\alpha,1}\times\mathcal{U}_{\alpha,q}\to\mathcal{R}_{\alpha,q+1}$
is a continuous bilinear map.\index{Convolution}\end{prop}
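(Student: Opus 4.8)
The plan is to reduce the statement to a single convolution estimate on the weights $\mu_{\alpha,q}$ and then carry out the index bookkeeping forced by the definitions of $\mathcal{U}$ and $\mathcal{R}$. The basic ingredient, which I would state and prove first (a version is already in \citet{Wittwer-structureofStationary2002}), is that for $\alpha_{1},\alpha_{2}>1$ and $q_{1},q_{2}>-1$ the map
\[
*\colon\mathcal{B}_{\alpha_{1},q_{1}}\times\mathcal{B}_{\alpha_{2},q_{2}}\longrightarrow\mathcal{B}_{\min(\alpha_{1},\alpha_{2}),\,q_{1}+q_{2}+1}
\]
is continuous and bilinear. Inserting the defining bounds and substituting $\ell=m/y$ in the convolution integral reduces this to the elementary one-dimensional estimate $(1+|\cdot|^{\alpha_{1}})^{-1}*(1+|\cdot|^{\alpha_{2}})^{-1}\lesssim(1+|\cdot|^{\min(\alpha_{1},\alpha_{2})})^{-1}$, proved by splitting the integration variable according to $|m|\lesssim|x|$ or $|m|\gtrsim|x|$. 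The change of variables is exactly what produces the extra factor $y^{-1}$, hence the ``$+1$'' gained in the decay index, and the hypothesis $q_{i}>-1$ is what keeps each weight locally integrable near $k=0$ so that the integral converges.

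Since $\mathcal{U}_{\alpha,1}$, $\mathcal{U}_{\alpha,q}$ and $\mathcal{R}_{\alpha,q+1}$ are all defined by requiring finitely many mixed derivatives $\partial_{y}^{j}\partial_{k}^{i}$ to lie in shifted $\mathcal{B}$-spaces, it then suffices to bound $\partial_{y}^{j}\partial_{k}^{i}(\hat{f}*\hat{g})$ in $\mathcal{B}_{\alpha+1-j,\,q-i+j}$ for $0\le i\le\lfloor q\rfloor$ and $0\le j\le\lfloor\alpha\rfloor-1$. Using that $\partial_{k}$ commutes with $*$ and can be moved onto either factor, and that $\partial_{y}$ obeys the Leibniz rule, I would write this as a finite sum of terms $(\partial_{y}^{j_{1}}\hat{f})*(\partial_{y}^{j_{2}}\partial_{k}^{i}\hat{g})$ with $j_{1}+j_{2}=j$, placing all $k$-derivatives on $\hat{g}$ (admissible since $i\le\lfloor q\rfloor$ is controlled by the $\mathcal{U}_{\alpha,q}$-norm). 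By definition $\partial_{y}^{j_{1}}\hat{f}\in\mathcal{B}_{\alpha+1-j_{1},\,j_{1}}$ and $\partial_{y}^{j_{2}}\partial_{k}^{i}\hat{g}\in\mathcal{B}_{\alpha+1-j_{2},\,q-1-i+j_{2}}$, and since $j_{1},j_{2}\le\lfloor\alpha\rfloor-1$ both first indices are at least $\alpha+2-\lfloor\alpha\rfloor\ge2$. The convolution estimate then puts each term in $\mathcal{B}_{\alpha+1-\max(j_{1},j_{2}),\,q-i+j}$, and since $\max(j_{1},j_{2})\le j_{1}+j_{2}=j$ this embeds into the required $\mathcal{B}_{\alpha+1-j,\,q-i+j}$ by the monotonicity of the spaces in the first index. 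Summing over $i$, $j$ and the Leibniz terms yields the bilinear continuity estimate.

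The main obstacle is the behavior at $k=0$, i.e.\ verifying the hypothesis $q_{1},q_{2}>-1$, rather than the decay at infinity. The first factor is harmless precisely because $\hat{f}\in\mathcal{U}_{\alpha,1}$ has $q_{f}=1>0$, so $\partial_{y}^{j_{1}}\hat{f}$ sits in a $\mathcal{B}$-space with nonnegative decay index; this is also the structural reason the convolution gains a power of $y$-decay, consistent with $\bu\otimes\bu$ decaying faster than $\bu$. The second factor is borderline only through its top $k$-derivative, $\partial_{k}^{\lfloor q\rfloor}\hat{g}\in\mathcal{B}_{\alpha+1,\,q-1-\lfloor q\rfloor}$ with $q-1-\lfloor q\rfloor\in(-1,0]$: this is strictly $>-1$ for $q\notin\mathbb{N}$, so the argument closes directly, while for integer $q$ one must use that this factor is the $k$-derivative of a function continuous at $k=0$, together with the symmetry $\overline{\hat{g}(k)}=\hat{g}(-k)$, to see it is at worst logarithmically singular there, which still gives a convolution that is bounded near $k=0$ since the target decay index $q-i+j$ is $\ge0$.
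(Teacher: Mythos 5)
Your proposal is correct and follows essentially the same route as the paper: a basic convolution estimate on the weighted spaces $\mathcal{B}_{\alpha,q}$ that gains one unit of $y$-decay (reduced by scaling to a one-dimensional inequality for the weights, with the singular behavior near $k=0$ controlled by the decay index being $>-1$), followed by the same derivative bookkeeping that places all $k$-derivatives on the second factor and uses the Leibniz rule in $y$. Your remark on the borderline case of integer $q$ (where the top $k$-derivative of the second factor lands in a space with decay index exactly $-1$) is in fact slightly more careful than the paper's own treatment, which only proves the weight estimate for fractional singularities $\nu\in(0,1)$ and relies on $q$ being non-integer in the application.
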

\begin{proof}
First, we show that the map $*\colon\mathcal{B}_{\alpha,p}\times\mathcal{B}_{\alpha,q-1}\to\mathcal{B}_{\alpha,p+q}$
is a continuous bilinear map, for $p,q\geq0$. If $\hat{f}\in\mathcal{B}_{\alpha,p}$
and $\hat{g}\in\mathcal{B}_{\alpha,q-1}$, $\hat{f}$ is in $L^{\infty}(\mathbb{R})$
and $\hat{g}$ is in $L^{1}(\mathbb{R})$ for fixed $y\in\left[1;\infty\right)$,
so \citep[see for example][Proposition 8.8]{Folland1999} $\hat{f}*\hat{g}\in C(\mathbb{R})$.
The dependence of the convolution on the power of $y$ is trivial
and it therefore suffices to prove that
\[
\mu_{\alpha,0}*\mu_{\alpha,0}^{\nu}\lesssim\mu_{\alpha,1}\,,
\]
for $\nu\in\left(0,1\right)$, where
\[
\mu_{\alpha,0}^{\nu}(k,y)=\frac{1}{\left(\left|k\right|y\right)^{\nu}}\frac{1}{1+\left(\left|k\right|y\right)^{\alpha-\nu}}\,.
\]
For $\left|k\right|y\leq1$, we have
\begin{align*}
\int_{\mathbb{R}}\mu_{\alpha,0}\left(\ell,y\right)\mu_{\alpha,0}^{\nu}\left(k-\ell,y\right)\rd\ell & \leq\int_{\mathbb{R}}\mu_{\alpha,0}^{\nu}\left(k-\ell,y\right)\rd\ell\\
 & \leq\frac{1}{y}\int_{\mathbb{R}}\mu_{\alpha,0}^{\nu}\left(\ell,1\right)\rd\ell\\
 & \lesssim\mu_{\alpha,1}\left(k,y\right)\,,
\end{align*}
and, for $\left|k\right|y>1$, we have that $\mu_{\alpha,0}^{\nu}\leq\mu_{\alpha,0}$
and therefore, by splitting the integral at $k/2$, we find that
\begin{align*}
\int_{\mathbb{R}}\mu_{\alpha,0}\left(\ell,y\right)\mu_{\alpha,0}^{\nu}\left(k-\ell,y\right)\rd\ell & \leq\mu_{\alpha,0}\left(k/2,y\right)\int_{\mathbb{R}}\mu_{\alpha,0}\left(\ell,y\right)\rd\ell\\
 & \leq\mu_{\alpha,0}\left(k/2,y\right)\frac{1}{y}\int_{\mathbb{R}}\mu_{\alpha,0}\left(\ell,1\right)\rd\ell\\
 & \lesssim\mu_{\alpha,1}\left(k,y\right)\,.
\end{align*}

Now we consider $\hat{f}\in\mathcal{U}_{\alpha,1}$, and $\hat{g}\in\mathcal{U}_{\alpha,q}$.
If $\alpha>0$, we have \citep[see for example][Exercise 8.8]{Folland1999}
$\partial_{k}\left(\hat{f}*\hat{g}\right)=\hat{f}*\partial_{k}\hat{g}$,
so by using the previous result, $\hat{f}*\partial_{k}\hat{g}\in\mathcal{B}_{\alpha+1,q-1}$.
By taking the derivative with respect to $y$, we have $\partial_{y}\left(\hat{f}*\hat{g}\right)=\partial_{y}\hat{f}*\hat{g}+\hat{f}*\partial_{y}\hat{g}\in\mathcal{B}_{\alpha,q+1}$.
Finally, by a recursion on the number of derivatives, we obtain that
$\hat{f}*\hat{g}\in\mathcal{R}_{\alpha,q+1}$.
\end{proof}
Now we can state the main theorem:
\begin{thm}[existence of $(\alpha,q)$-solutions for Navier-Stokes]
\label{thm:Navier-Stokes}\index{Navier-Stokes equations!in the half-plane!asymptotic behavior}\index{Navier-Stokes equations!in the half-plane!strong solutions}\index{Existence!strong solutions!in the half-plane}\index{Strong solutions!in the half-plane}For
$\alpha>2$ and $q\in\left(1,2\right)$, there exists $\nu>0$ such
that for any $\Phi\in\mathbb{R}$ and $\hat{\bu}_{s}\in\mathcal{T}_{\alpha,q}$
satisfying
\begin{align*}
\left|\Phi\right| & \leq\nu\,, & \left\Vert \hat{\bu}_{s};\mathcal{T}_{\alpha,0}\right\Vert  & \leq\nu\,, & \int_{\mathbb{R}}\bu_{s}(x)\,\rd x & =\bzero\,,
\end{align*}
there exists $A\in\mathbb{R}$ such that there exists $\left(\bu,p\right)\in C^{2}(\Omega)\times C^{1}(\Omega)$
satisfying \eqref{ns} with
\[
\bu^{*}(x)=\bu_{\Phi}^{\sigma}(x,1)+\frac{\left(A,0\right)}{\sqrt{2\pi}}\e^{-\frac{1}{2}x^{2}}+\bu_{s}(x)\,.
\]
Moreover, $\hat{\bu}-\hat{\bu}_{\Phi}^{\sigma}\in\mathcal{U}_{\alpha,q}$
so that
\[
\lim_{y\to\infty}y\left(\sup_{x\in\mathbb{R}}\left|\bu-\bu_{\Phi}^{\sigma}\right|\right)=0\,,
\]
and
\begin{align}
\begin{aligned}\bnabla\bu & \in L^{2}(\Omega)\,,\\
\bu/y & \in L^{2}(\Omega)\,,
\end{aligned}
 &  & \begin{aligned}y\bu & \in L^{\infty}(\Omega)\,,\\
y^{2}\bnabla\bu & \in L^{\infty}(\Omega)\,.
\end{aligned}
\label{eq:asol-bounds-ns}
\end{align}
\end{thm}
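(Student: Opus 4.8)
The plan is to obtain the solution by a Banach fixed point argument for the perturbation $\hat{\bu}_{1}=\hat{\bu}-\hat{\bu}_{\Phi}^{\sigma}$ of the Jeffery-Hamel flow, inverting the Stokes operator by \thmref{Stokes} and controlling the nonlinearity by \propref{convolution}. Since $\bu_{\Phi}^{\sigma}$ solves the stationary Navier-Stokes equations, rewriting \eqref{ns} as the Stokes system \eqref{Stokes-u} with $\mathbf{Q}=\bu\otimes\bu$ and subtracting the Jeffery-Hamel identity shows that $\bu_{1}$ solves \eqref{Stokes-u} with the symmetric tensor
\[
\mathbf{Q}_{1}=\bu_{\Phi}^{\sigma}\otimes\bu_{1}+\bu_{1}\otimes\bu_{\Phi}^{\sigma}+\bu_{1}\otimes\bu_{1}
\]
and boundary datum $\bu_{1}^{*}(x)=\frac{(A,0)}{\sqrt{2\pi}}\e^{-\frac{1}{2}x^{2}}+\bu_{s}(x)$, whose integral over $\mathbb{R}$ equals $(A,0)$ and which therefore has zero flux. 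For $\hat{\bu}_{1}$ in a ball of $\mathcal{U}_{\alpha,q}\subset\mathcal{U}_{\alpha,1}$ (using $q>1$), \propref{convolution} yields $\bu_{1}\otimes\bu_{1}\in\mathcal{R}_{\alpha,q+1}$; each of the two terms linear in $\bu_{1}$ carries one factor $\hat{\bu}_{\Phi}^{\sigma}$, which by \eqref{space-JH} lies in $\mathcal{U}_{\alpha,0}$ with a norm $\varepsilon_{\Phi}$ that tends to $0$ as $\Phi\to0$, and a direct estimate of these terms — exploiting the explicit rescaled form $\hat{u}_{\Phi}^{\sigma}(k,y)=\hat{f}_{u}(ky)$ with $f_{u}$ smooth, rather than applying \propref{convolution} verbatim, since $\hat{\bu}_{\Phi}^{\sigma}$ is \emph{not} in $\mathcal{U}_{\alpha,1}$ — gives $\hat{\mathbf{Q}}_{1}\in\mathcal{R}_{\alpha,q+1}$ with
\[
\bigl\Vert\hat{\mathbf{Q}}_{1};\mathcal{R}_{\alpha,q+1}\bigr\Vert\lesssim\varepsilon_{\Phi}\bigl\Vert\hat{\bu}_{1};\mathcal{U}_{\alpha,q}\bigr\Vert+\bigl\Vert\hat{\bu}_{1};\mathcal{U}_{\alpha,q}\bigr\Vert^{2}\,.
\]

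Next I would fix the free parameter $A$ self-consistently. Writing $\hat{R}_{1}=(\hat{\mathbf{Q}}_{1})_{12}$, the compatibility conditions of \propref{compatibility} for the datum $\bu_{1}^{*}$ and the tensor $\mathbf{Q}_{1}$ read, for $q\in(1,2)$, the flux condition $\hat{v}_{1}^{*}(0)=0$ — which holds automatically since $\bu_{1}^{*}$ has zero flux — together with the single scalar equation
\[
\frac{A}{2\pi}+\int_{1}^{\infty}\hat{R}_{1}(0,y)\,\rd y=0\,,
\]
which \emph{defines} $A=A(\hat{\bu}_{1})$: the Gaussian in $\bu_{1}^{*}$ contributes precisely the nonzero value at $k=0$ (it lies in $\mathcal{T}_{\alpha,q}$ but not in $\mathcal{W}_{\alpha,q}$) that is needed to meet this constraint. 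From the bound above, $|A(\hat{\bu}_{1})|\lesssim\bigl\Vert\hat{\mathbf{Q}}_{1};\mathcal{R}_{\alpha,q+1}\bigr\Vert$, and, since $\mathbf{Q}_{1}$ is at most quadratic in $\hat{\bu}_{1}$, the map $\hat{\bu}_{1}\mapsto A(\hat{\bu}_{1})$ is Lipschitz with small constant on a small ball. With this choice $\hat{\bu}_{r}(0)=\bzero$, hence $\hat{\bu}_{r}\in\mathcal{W}_{\alpha,q}$ by \lemref{spaces-TtoU}, and the second part of \thmref{Stokes} produces an $(\alpha,q)$-solution; this defines the map $\Psi$ on the ball, whose fixed point is the sought solution.

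The remaining estimates are routine. The self-map property follows from the Stokes bound $\bigl\Vert\Psi(\hat{\bu}_{1});\mathcal{U}_{\alpha,q}\bigr\Vert\lesssim|A|+\bigl\Vert\hat{\bu}_{s};\mathcal{T}_{\alpha,q}\bigr\Vert+\bigl\Vert\hat{\mathbf{Q}}_{1};\mathcal{R}_{\alpha,q+1}\bigr\Vert$ combined with the quadratic estimate, and the contraction property from the at-most-quadratic dependence of $\mathbf{Q}_{1}$ (and of $A$) on $\hat{\bu}_{1}$, which generates a prefactor $\lesssim\varepsilon_{\Phi}+(\text{ball radius})$ that is made small by choosing $\nu$ small. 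The one delicate point is that $\hat{\bu}_{s}$ is assumed small only in the weaker norm $\mathcal{T}_{\alpha,0}$ while merely belonging to $\mathcal{T}_{\alpha,q}$; the argument is therefore run with a pair of norms — a \emph{small} ball in $\mathcal{U}_{\alpha,0}$ carrying the contraction, and a \emph{bounded} ball in $\mathcal{U}_{\alpha,q}$, of radius controlled by $\bigl\Vert\hat{\bu}_{s};\mathcal{T}_{\alpha,q}\bigr\Vert$, carrying the regularity and decay — as is standard for this class of problems.

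Finally, the fixed point satisfies $\hat{\bu}-\hat{\bu}_{\Phi}^{\sigma}\in\mathcal{U}_{\alpha,q}$, so since $\alpha>2$ and $q>0$ the pair $(\bu,p)$ is a classical $(\alpha,q)$-solution, $(\bu,p)\in C^{2}(\Omega)\times C^{1}(\Omega)$, solving \eqref{ns}. Applying \lemref{spaces-regularity} to $\hat{\bu}_{1}\in\mathcal{U}_{\alpha,q}$ with $q>1$ gives $y^{q}\bu_{1}\in L^{\infty}(\Omega)$, whence $\lim_{y\to\infty}y\sup_{x}|\bu-\bu_{\Phi}^{\sigma}|=0$; the bounds \eqref{asol-bounds-ns} follow from the same lemma applied to $\hat{\bu}_{1}$ and to $\hat{\bu}_{\Phi}^{\sigma}\in\mathcal{U}_{\alpha,0}$, together with the explicit $r^{-1}$-decay of the Jeffery-Hamel flow, using $q>1$ for the $L^{2}$ statements. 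I expect the \textbf{main obstacle} to be the non-invariance of the asymmetry: one cannot invert the Stokes operator on the prescribed boundary data alone, and must instead carry the free parameter $A$ and close the compatibility condition of \propref{compatibility} within the fixed point, verifying along the way that the resulting $\hat{\bu}_{r}$ lands in $\mathcal{W}_{\alpha,q}$ and not merely in $\mathcal{T}_{\alpha,q}$. A secondary difficulty is that $\hat{\bu}_{\Phi}^{\sigma}$ belongs only to $\mathcal{U}_{\alpha,0}$ — its Fourier transform has a $|k|$-type corner at $k=0$, reflecting the $1/|x|$ tails of the profile — so the mixed nonlinear terms have to be bounded by hand rather than directly from \propref{convolution}.
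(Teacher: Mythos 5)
Your proposal is correct and follows essentially the same route as the paper: the decomposition $\hat{\bu}=\hat{\bu}_{\Phi}^{\sigma}+\hat{\bu}_{1}$, inversion of the Stokes system of \thmref{Stokes} with $\mathbf{Q}=\bu_{\Phi}^{\sigma}\otimes\bu_{1}+\bu_{1}\otimes\bu_{\Phi}^{\sigma}+\bu_{1}\otimes\bu_{1}$, the choice $A=-\int_{1}^{\infty}\hat{R}(0,y)\,\rd y$ enforcing the compatibility condition of \propref{compatibility} (the flux condition being automatic), a Banach fixed point closed via \propref{convolution}, and the decay and regularity statements from \lemref{spaces-regularity}. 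You are in fact somewhat more careful than the paper on two points it leaves implicit, namely the estimate of the mixed terms given that $\hat{\bu}_{\Phi}^{\sigma}$ lies only in $\mathcal{U}_{\alpha,0}$ rather than $\mathcal{U}_{\alpha,1}$, and the mismatch between the smallness hypothesis in the $\mathcal{T}_{\alpha,0}$ norm and the $\mathcal{T}_{\alpha,q}$ norm appearing in the quadratic estimate.
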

\begin{proof}
We look for solutions of the form $\hat{\bu}=\hat{\bu}_{\Phi}^{\sigma}+\hat{\bu}_{1}$
and perform a fixed point argument on $\hat{\bu}_{1}$ in the space
$\hat{\bu}_{1}\in\mathcal{U}_{\alpha,q}$. In view of the previous
section, the Navier-Stokes equation can be written as the Stokes equation
\eqref{Stokes-u} for $\bu_{1}$ where $\mathbf{Q}=\bu_{\Phi}^{\sigma}\otimes\bu_{1}+\bu_{1}\otimes\bu_{\Phi}^{\sigma}+\bu_{1}\otimes\bu_{1}$.
The boundary condition is
\[
\hat{\bu}^{*}(k)=\hat{\bu}_{\Phi}^{\sigma}(k,1)+\left(A,0\right)\e^{-\frac{1}{2}k^{2}}+\hat{\bu}_{s}(k)\,,
\]
and the compatibility conditions of  \propref{compatibility}
are given by
\begin{align*}
\hat{u}_{r}(0) & =A+\int_{1}^{\infty}\hat{R}(0,y)\,\rd y\,, & \hat{v}_{r}(0) & =0\,,
\end{align*}
since by hypothesis $\hat{\bu}_{s}(k)=\bzero$. Therefore, by defining
$A=-\int_{1}^{\infty}\hat{R}(0,y)\,\rd y$, the two compatibility
conditions are fulfilled. In what follows, $C>0$ represents a generic
constant depending on $q$, but not on $\varepsilon$. By \propref{convolution},
we have $\hat{\mathbf{Q}}\in\mathcal{R}_{\alpha,q+1}$ and
\[
\left\Vert \hat{\bR};\mathcal{R}_{\alpha,q+1}\right\Vert \leq C\left(\left\Vert \hat{\bu}_{\Phi}^{\sigma};\mathcal{U}_{\alpha,0}\right\Vert +\left\Vert \hat{\bu}_{1};\mathcal{U}_{\alpha,q}\right\Vert \right)\left\Vert \hat{\bu}_{1};\mathcal{U}_{\alpha,q}\right\Vert \,.
\]
Since
\[
\left|A\right|\leq\left|\int_{1}^{\infty}\hat{R}(0,y)\,\rd y\right|\leq\frac{1}{p}\left\Vert \hat{\bR};\mathcal{R}_{\alpha,q+1}\right\Vert \,,
\]
we have
\[
\left\Vert \hat{\bu}^{*}-\hat{\bu}_{\Phi}^{\sigma};\mathcal{T}_{\alpha,q}\right\Vert \leq\left\Vert \hat{\bu}_{s};\mathcal{T}_{\alpha,q}\right\Vert +C\left\Vert \hat{\bR};\mathcal{R}_{\alpha,q+1}\right\Vert \,.
\]
By applying \thmref{Stokes}, we obtain that 
\[
\left\Vert \hat{\bu}_{1};\mathcal{U}_{\alpha,q}\right\Vert \leq C\left\Vert \hat{\bu}_{s};\mathcal{T}_{\alpha,q}\right\Vert +C\left(\left\Vert \hat{\bu}_{\Phi}^{\sigma};\mathcal{U}_{\alpha,0}\right\Vert +\left\Vert \hat{\bu}_{1};\mathcal{U}_{\alpha,q}\right\Vert \right)\left\Vert \hat{\bu}_{1};\mathcal{U}_{\alpha,p}\right\Vert \,.
\]
Therefore, for $\varepsilon>0$ small enough, a fixed point argument
shows the existence of a solution $\left(\hat{\bu},\hat{p}\right)\in\mathcal{U}_{\alpha,q}\times\mathcal{P}_{\alpha-1,p+1}$
of the Fourier transform of the Navier-Stokes equation. In the same
way as in \thmref{Stokes}, we obtain the claimed regularity
and the asymptotic properties.
\end{proof}

\section{Existence of weak solutions\label{sec:weaksol}}

In this section we define weak solutions for our problem, and we discuss
in particular the technicalities due to the inhomogeneous boundary
conditions on an unbounded boundary. In order to show that our definition
of weak solutions is general enough, we then construct such solutions
by Leray's method. To study an inhomogeneous boundary problem, it
is standard \citep[see for example][Chapter 5.]{Ladyzhenskaya-MathematicalTheory1963}
to define weak solutions by using an extension map to write the energy
inequality.

We denote by $D_{\sigma}^{1,2}(\Omega)$ the subspace of the homogeneous
Sobolev space of order $(1,2)$ of divergence-free functions on $\Omega$,
and by $D_{0,\sigma}^{1,2}(\Omega)$ the completion with respect to
the norm of $D_{\sigma}^{1,2}(\Omega)$ of the vector space of smooth
divergence-free functions with compact support in $\Omega$. We refer
the reader to \citep[Chapter II.6.]{Galdi-IntroductiontoMathematical2011}
for the properties of these spaces. The main tool in studying the
existence and uniqueness of weak solution is the Hardy inequality:
\begin{prop}[{\citealp[\S 2.7.1]{Mazya-SobolevSpaceswith2011}}]
For all $\bu\in D_{0,\sigma}^{1,2}(\Omega)$, we have\index{Hardy inequality!in the half-plane}\index{Inequality!Hardy!in the half-plane}
\[
\left\Vert \bu/y\right\Vert _{2}\leq2\left\Vert \bnabla\bu\right\Vert _{2}\,.
\]

\end{prop}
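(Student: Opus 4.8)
The statement is the one-dimensional Hardy inequality transported to $\Omega$ fiberwise, so the plan is to reduce to that classical inequality by density together with an extension by zero. First I would note that, by the very definition of $D_{0,\sigma}^{1,2}(\Omega)$ as a completion, it is enough to prove $\left\Vert\bu/y\right\Vert_2\le 2\left\Vert\bnabla\bu\right\Vert_2$ for $\bu$ smooth, divergence-free and with compact support in the open set $\Omega$. Indeed, applying such a bound to differences shows that any sequence which is Cauchy for $\left\Vert\bnabla\,\cdot\,\right\Vert_2$ is Cauchy for $\left\Vert\,\cdot\,/y\right\Vert_2$ as well; passing to a subsequence converging almost everywhere identifies the limit with $\bu/y$, and the inequality survives the passage to the limit.

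So let $\bu=(u,v)$ be smooth and compactly supported in $\Omega=\{y>1\}$. Its support being a compact subset of this open set, $\bu$ vanishes for $y$ in a neighborhood of $\{y=1\}$; hence extending $\bu$ by $\bzero$ to the half-plane $\{(x,y):y>0\}$ yields a smooth, compactly supported field, still written $\bu$, for which the integrals of $|\bu|^2/y^2$ and of $|\bnabla\bu|^2$ over $\{y>0\}$ equal those over $\Omega$. For each fixed $x$ the two functions $y\mapsto u(x,y)$ and $y\mapsto v(x,y)$ lie in $C_0^\infty\bigl((0,\infty)\bigr)$, so the sharp one-dimensional Hardy inequality
\[
\int_0^\infty\frac{|g(y)|^2}{y^2}\,\rd y\le 4\int_0^\infty|g'(y)|^2\,\rd y
\]
applies to each component. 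Adding the two resulting inequalities and integrating over $x\in\mathbb{R}$ (Fubini) gives
\[
\int_\Omega\frac{|\bu|^2}{y^2}\le 4\int_\Omega\bigl(|\partial_y u|^2+|\partial_y v|^2\bigr)\le 4\int_\Omega|\bnabla\bu|^2\,,
\]
and taking square roots yields the claimed bound on the dense subspace, hence, by the first step, on all of $D_{0,\sigma}^{1,2}(\Omega)$.

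There is no real obstacle here; the only point worth a word is the reduction to the scalar Hardy inequality, which hinges on the fact that elements of $D_{0,\sigma}^{1,2}(\Omega)$ are approximated by fields supported away from $\{y=1\}$, which is what allows the zero-extension and the use of the Hardy inequality on $(0,\infty)$. One could equally stay on $\Omega$ and apply the Hardy inequality on $(1,\infty)$ to functions vanishing at the endpoint $y=1$, after the harmless estimate $1/y^2\le 1/(y-1)^2$; either way the sharp scalar constant $4$ produces the constant $2$ above. Finally, the divergence-free condition plays no role: the inequality in fact holds for every $\bu\in D_0^{1,2}(\Omega)$.
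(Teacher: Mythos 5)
Your argument is correct. Note, however, that the paper does not prove this proposition at all: it is simply quoted from Maz'ya's book (\S 2.7.1), so there is no in-paper proof to compare against, and what you have written is a self-contained replacement for that citation. Your route is the standard one and it works: reduce by density to $\bu\in C_{0,\sigma}^{\infty}(\Omega)$, observe that such a field extends by zero to a test function on the full half-plane $\{y>0\}$ (or, equivalently, translate and use $1/y^{2}\le 1/(y-1)^{2}$ on $\Omega$), apply the sharp one-dimensional Hardy inequality $\int_{0}^{\infty}|g|^{2}y^{-2}\,\rd y\le 4\int_{0}^{\infty}|g'|^{2}\,\rd y$ to each component along vertical lines, integrate in $x$, and discard the $\partial_{x}$-derivatives from $|\bnabla\bu|^{2}$; the constant $4$ on the squares becomes the stated $2$ after taking square roots. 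Your closing remarks are also accurate: the divergence-free condition is irrelevant, and the density step is exactly what shows that elements of the completion $D_{0,\sigma}^{1,2}(\Omega)$ genuinely admit an $L^{2}$ representative of $\bu/y$, which is the only point requiring a word of care in an unbounded two-dimensional domain. In short, the proposal is a correct elementary proof of a fact the paper only cites.
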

We define an extension map as follows, whose existence is proved in
\thmref{Stokes} for $\hat{\bu}^{*}\in\mathcal{A}_{\alpha,0}$:
\begin{defn}[extension]
\label{def:extension}Given a boundary condition $\bu^{*}$,
an extension is a map $\ba\in D_{\sigma}^{1,2}(\Omega)$ such that
$\ba/y\in L^{2}(\Omega)$, $y\ba\in L^{\infty}(\Omega)$ and $y^{2}\bnabla\ba\in L^{\infty}$
and such that the trace of $\ba$ on $\partial\Omega$ is $\bu^{*}$.
\end{defn}

\begin{defn}[weak solution]
\label{def:weak-solution}\index{Weak solutions!in the half-plane!definition}A
weak solution in the domain $\Omega$ with boundary condition $\bu^{*}$
is a vector field $\bu=\ba+\bv$, where $\ba$ is an extension of
$\bu^{*}$ and $\bv\in D_{0,\sigma}^{1,2}(\Omega)$ which satisfies:
\begin{equation}
\int_{\Omega}\bnabla\bu:\bnabla\bphi+\int_{\Omega}\left(\bu\bcdot\bnabla\bu\right)\bcdot\bphi=0\,,\label{eq:weak-sol}
\end{equation}
for arbitrary smooth divergence-free vector-fields $\bphi$ with compact
support in $\Omega$.
\end{defn}
The main result of this section is the existence of weak solutions:
\begin{thm}[existence of weak solution]
\label{thm:existence-weak}\index{Navier-Stokes equations!in the half-plane!weak solutions}\index{Existence!weak solutions!in the half-plane}\index{Weak solutions!in the half-plane!existence}For
a small enough boundary condition $\bu^{*}$ (more precisely such
that $\left\Vert y\ba\right\Vert _{\infty}+\left\Vert \ba/y\right\Vert _{2}$
is small enough), there exists a weak solution $\bu$ in $\Omega$\textup{.}
\end{thm}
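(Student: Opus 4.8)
The plan is to adapt Leray's method to the unbounded domain, replacing the Sobolev estimates that fail here by the weighted bounds built into \defref{extension} together with the Hardy inequality. Fix an extension $\ba$ of $\bu^*$ as in \defref{extension}; its existence is provided by \thmref{Stokes}, and all four norms $\|\bnabla\ba\|_2$, $\|\ba/y\|_2$, $\|y\ba\|_\infty$, $\|y^2\bnabla\ba\|_\infty$ are then finite (and small with $\bu^*$). Seeking the weak solution in the form $\bu=\ba+\bv$ with $\bv\in D^{1,2}_{0,\sigma}(\Omega)$, the identity \eqref{weak-sol} is equivalent to asking that, for every smooth divergence-free $\bphi$ with compact support,
\[
\int_\Omega\bnabla\bv:\bnabla\bphi+\int_\Omega(\ba\bcdot\bnabla\bv)\bcdot\bphi+\int_\Omega(\bv\bcdot\bnabla\ba)\bcdot\bphi+\int_\Omega(\bv\bcdot\bnabla\bv)\bcdot\bphi=-\int_\Omega\bnabla\ba:\bnabla\bphi-\int_\Omega(\ba\bcdot\bnabla\ba)\bcdot\bphi\,.
\]

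I would solve this by a Galerkin scheme. Choose smooth compactly supported divergence-free fields $\{\bphi_k\}$ whose span is dense in $D^{1,2}_{0,\sigma}(\Omega)$, and for each $n$ look for $\bv_n=\sum_{k\le n}c^n_k\bphi_k$ satisfying the displayed identity against $\bphi_1,\dots,\bphi_n$; the existence of $\bv_n$ follows from the usual finite-dimensional topological argument (a consequence of Brouwer's theorem) once the $\bv_n$ are known to stay in a fixed ball. The needed a priori bound comes from testing with $\bv_n$: since $\ba$ and $\bv_n$ are divergence-free and $\bv_n$ has compact support, $\int(\ba\bcdot\bnabla\bv_n)\bcdot\bv_n=\int(\bv_n\bcdot\bnabla\bv_n)\bcdot\bv_n=0$, so after integrating by parts in the surviving quadratic terms (no boundary contribution, as $\bv_n$ vanishes on $\partial\Omega$ and near infinity),
\[
\|\bnabla\bv_n\|_2^2\leq\|\bnabla\ba\|_2\,\|\bnabla\bv_n\|_2+\|y\ba\|_\infty\|\ba/y\|_2\,\|\bnabla\bv_n\|_2+\|y\ba\|_\infty\|\bv_n/y\|_2\,\|\bnabla\bv_n\|_2\,,
\]
where the last term comes from $\int(\bv_n\bcdot\bnabla\ba)\bcdot\bv_n=-\int(\bv_n\bcdot\bnabla\bv_n)\bcdot\ba$ and the pointwise bound $y|\ba|\le\|y\ba\|_\infty$. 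The Hardy inequality $\|\bv_n/y\|_2\leq2\|\bnabla\bv_n\|_2$ converts this into $(1-2\|y\ba\|_\infty)\|\bnabla\bv_n\|_2\leq\|\bnabla\ba\|_2+\|y\ba\|_\infty\|\ba/y\|_2$, so for $\|y\ba\|_\infty<\tfrac12$ the $\|\bnabla\bv_n\|_2$ are bounded uniformly in $n$.

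It then remains to pass to the limit. Along a subsequence, $\bv_n\rightharpoonup\bv$ in $D^{1,2}_{0,\sigma}(\Omega)$ and (by Hardy) $\bv_n/y\rightharpoonup\bv/y$ in $L^2(\Omega)$; on each bounded piece $\Omega\cap B_R$ the sequence is bounded in $H^1$, so Rellich compactness and a diagonal extraction give $\bv_n\to\bv$ in $L^2_{\mathrm{loc}}(\Omega)$. Fixing $\bphi_j$, the linear terms of the Galerkin identity pass by weak convergence, and $\int(\bv_n\bcdot\bnabla\bv_n)\bcdot\bphi_j$ passes because $\bv_n\to\bv$ strongly in $L^2$ on the compact support of $\bphi_j$ while $\bnabla\bv_n\rightharpoonup\bnabla\bv$; hence $\bv$ satisfies the identity against every $\bphi_j$, thus against $\mathrm{span}\{\bphi_k\}$ by linearity, and against every $\bphi\in D^{1,2}_{0,\sigma}(\Omega)$ by density together with continuity of the two sides as functionals on $D^{1,2}_{0,\sigma}(\Omega)$ — a continuity which the weighted bounds of \defref{extension} (notably $y^2\bnabla\ba\in L^\infty$) and the Hardy and two-dimensional Ladyzhenskaya inequalities secure. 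Undoing the substitution, $\bu=\ba+\bv$ is a weak solution.

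I expect the a priori estimate, not the passage to the limit, to be the genuine obstacle: on an unbounded two-dimensional domain one cannot dispose of $\int(\bv\bcdot\bnabla\ba)\bcdot\bv$ by a Sobolev interpolation near the boundary, and the device that rescues the argument is to integrate by parts into $\int(\bv\bcdot\bnabla\bv)\bcdot\ba$ and then use the pointwise weight $y\ba\in L^\infty$ together with the Hardy inequality, so that smallness of $\|y\ba\|_\infty$ is precisely what lets this term be absorbed into $\|\bnabla\bv\|_2^2$. A related, more technical, point is to justify the several integrations by parts and the continuity of the trilinear form over the unbounded domain, which again lean on the weighted bounds of \defref{extension} and on the Hardy inequality rather than on any global Sobolev embedding.
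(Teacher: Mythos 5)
Your key estimate is exactly the one the paper uses: the dangerous term $\int_{\Omega}\left(\bv\bcdot\bnabla\ba\right)\bcdot\bv$ is integrated by parts into $-\int_{\Omega}\left(\bv\bcdot\bnabla\bv\right)\bcdot\ba$ and then absorbed via the pointwise weight $y\ba\in L^{\infty}$ together with the Hardy inequality, so that smallness of $\left\Vert y\ba\right\Vert _{\infty}$ closes the a priori bound. Where you differ is the approximation scheme: the paper does not use a global Galerkin basis but rather invading half-balls $\Omega_{n}=B((0,1),n)\cap\Omega$, producing on each $\Omega_{n}$ an approximate solution by the Leray--Schauder fixed point theorem (complete continuity of the relevant map coming from the compact embedding $W_{0,\sigma}^{1,2}(\Omega_{n})\hookrightarrow L^{4}(\Omega_{n})$), with the uniform bound obtained from the same integration-by-parts device; the limit $n\to\infty$ is then taken exactly as in your last step, using Rellich compactness on the support of each fixed test field. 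The invading-domains route buys one thing your Galerkin route does not automatically give: each $\bv_{n}$ satisfies the identity against \emph{every} test field supported in $\Omega_{n}$, so in the limit the identity holds for every compactly supported smooth divergence-free $\bphi$ with no density argument needed.

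This matters because the density step at the end of your argument has a genuine weak point. To pass from $\mathrm{span}\{\bphi_{k}\}$ to an arbitrary compactly supported $\bphi$ by approximating in the $D_{0,\sigma}^{1,2}$ norm, you need the trilinear term to be continuous in $\bphi$ with respect to $\left\Vert \bnabla\bphi\right\Vert _{2}$ alone. The contributions involving $\ba$ are indeed controlled by the weighted bounds of \defref{extension} and Hardy, but the term $\int_{\Omega}\left(\bv\bcdot\bnabla\bv\right)\bcdot\bphi=-\int_{\Omega}\left(\bv\bcdot\bnabla\bphi\right)\bcdot\bv$ requires $\bv\in L^{4}(\Omega)$ globally, and the two-dimensional Ladyzhenskaya inequality you invoke needs $\left\Vert \bv\right\Vert _{2}$, which is \emph{not} finite for general $\bv\in D_{0,\sigma}^{1,2}(\Omega)$ on the half-plane: Hardy only controls $\left\Vert \bv/y\right\Vert _{2}$, and $D_{0,\sigma}^{1,2}(\Omega)$ does not embed into any global $L^{q}$ here. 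The gap is repairable --- for instance by choosing the basis so that every compactly supported smooth divergence-free field is approximated in $W^{1,2}$ by combinations supported in a fixed compact set, where the local embedding $W^{1,2}\hookrightarrow L^{4}$ (with the local $L^{2}$ norm supplied by Hardy since $y\geq1$) restores continuity --- but as written the final "by density together with continuity" sentence does not go through, and this is precisely the technicality the paper's exhaustion scheme is designed to sidestep.
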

Before proving this theorem, we mention the fact that any weak solution
vanishes at infinity in the following sense:
\begin{prop}
If $\bu=\ba+\bv$ is a weak solution with $\bv\in D_{0,\sigma}^{1,2}(\Omega)$,
and $y\ba\in L^{\infty}(\Omega)$, then\index{Weak solutions!in the half-plane!limit of the velocity}\index{Limit of the velocity!in the half-plane}\index{Asymptotic behavior!in the half-plane!Navier-Stokes solutions!weak
solutions}
\[
\lim_{\left|\bx\right|\to\infty}\bu=\bzero\,,
\]
in the following sense
\[
\lim_{r\to\infty}\int_{-\pi/2}^{\pi/2}\left|\bu(r\sin\theta,1+r\cos\theta)\right|^{2}\rd\theta=0\,.
\]
\end{prop}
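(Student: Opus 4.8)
The plan is to exploit the fact that $\bu=\ba+\bv$ with $\bv\in D_{0,\sigma}^{1,2}(\Omega)$ and $y\ba\in L^{\infty}(\Omega)$, and to treat the two summands separately. For the extension part, $y\ba\in L^{\infty}(\Omega)$ immediately gives $|\ba(r\sin\theta,1+r\cos\theta)|\lesssim 1/(1+r\cos\theta)\lesssim 1/r$ uniformly in $\theta\in(-\pi/2,\pi/2)$ away from the endpoints, so $\int_{-\pi/2}^{\pi/2}|\ba|^{2}\,\rd\theta\to 0$ as $r\to\infty$; near the endpoints $\theta\to\pm\pi/2$ one uses that the angular measure of the bad set shrinks. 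The real content is therefore to show that $\bv$ decays to zero in this averaged sense along circles.

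The key step for $\bv$ is a standard consequence of $\bnabla\bv\in L^{2}(\Omega)$ combined with the Hardy inequality $\|\bv/y\|_{2}\le 2\|\bnabla\bv\|_{2}$ quoted above. First I would pass to polar-type coordinates centered at $(0,1)$, writing $\bx=(r\sin\theta,1+r\cos\theta)$, and set
\[
g(r)=\int_{-\pi/2}^{\pi/2}\left|\bv(r\sin\theta,1+r\cos\theta)\right|^{2}\rd\theta\,.
\]
Both $\bnabla\bv\in L^{2}(\Omega)$ and $\bv/y\in L^{2}(\Omega)$ translate, after noting that $y=1+r\cos\theta$ is comparable to $1+r$ on the bulk of the angular range, into the integrability statement
\[
\int_{0}^{\infty}\left(\int_{-\pi/2}^{\pi/2}\left(\left|\bnabla\bv\right|^{2}+\frac{\left|\bv\right|^{2}}{(1+r)^{2}}\right)\rd\theta\right)(1+r)\,\rd r<\infty\,,
\]
so in particular $\int_{0}^{\infty}g(r)\,\frac{\rd r}{1+r}<\infty$ and $\int_{0}^{\infty}\big(\int|\partial_{r}\bv|^{2}\,\rd\theta\big)(1+r)\,\rd r<\infty$. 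The first bound forces $\liminf_{r\to\infty}g(r)=0$, and a Cauchy–Schwarz estimate on $g(r_{2})-g(r_{1})=\int_{r_{1}}^{r_{2}}g'(r)\,\rd r$ using $|g'(r)|\le 2\big(\int|\bv|^{2}\rd\theta\big)^{1/2}\big(\int|\partial_{r}\bv|^{2}\rd\theta\big)^{1/2}$ together with the two integrability facts shows that oscillations of $g$ are controlled, upgrading $\liminf$ to $\lim$; hence $g(r)\to 0$.

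The main obstacle is the behavior near the boundary $y=1$, i.e. near $\theta=\pm\pi/2$, where $y=1+r\cos\theta$ is no longer comparable to $1+r$ and the Hardy weight degenerates. To handle this I would split the angular integral into a "good cone" $|\theta|\le\pi/2-\delta$, where the argument above applies with constants depending on $\delta$, and a "bad caps" region $\pi/2-\delta<|\theta|\le\pi/2$; on the caps one uses that $\bv$ restricted to a fixed boundary-adjacent strip still has $\bnabla\bv\in L^{2}$ and vanishing trace, so a one-dimensional Hardy/Poincaré inequality in the $y$-direction gives $\int_{\text{cap}}|\bv|^{2}\rd\theta\lesssim \delta\cdot(\text{tail of }\|\bnabla\bv\|_{2}^{2})$, which is small uniformly in $r$ for small $\delta$. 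Choosing $\delta$ first and then $r$ large yields the claim. Finally, combining the bounds for $\ba$ and $\bv$ via $|\bu|^{2}\le 2|\ba|^{2}+2|\bv|^{2}$ gives $\lim_{r\to\infty}\int_{-\pi/2}^{\pi/2}|\bu(r\sin\theta,1+r\cos\theta)|^{2}\rd\theta=0$, which is exactly the asserted sense in which $\bu\to\bzero$ at infinity.
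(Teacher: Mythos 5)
Your argument is correct, but it takes a genuinely different route from the paper's. The paper does not split $\bu$ into $\ba+\bv$: it observes that $\bu/y\in L^{2}(\Omega)$ (Hardy for $\bv$ together with $\ba/y\in L^{2}$ from the definition of an extension) and $\bnabla\bu\in L^{2}(\Omega)$, applies the trace theorem on the unit half-shell $S_{1}=\Omega_{2}\setminus\Omega_{1}$, and rescales to obtain $\tfrac{1}{n}\Vert\bu;L^{2}(\partial\Omega_{n})\Vert^{2}\leq C\Vert\bu/y;L^{2}(S_{n})\Vert^{2}+C\Vert\bnabla\bu;L^{2}(S_{n})\Vert^{2}$, whose right-hand side is a tail of convergent integrals and hence vanishes. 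You instead handle $\ba$ by pointwise decay and replace the trace-and-rescaling step for $\bv$ by a one-dimensional analysis of $g(r)=\int_{-\pi/2}^{\pi/2}|\bv|^{2}\,\rd\theta$: weighted integrability forces $\liminf_{r\to\infty}g(r)=0$, and the Cauchy--Schwarz bound on $g'$ controls the oscillations. Your route is more elementary (no trace theorem) and makes the $\liminf$-to-$\lim$ upgrade explicit, but it leaves implicit the justification that $g$ is (a.e.\ equal to) an absolutely continuous function of $r$, i.e.\ that restricting an $H^{1}_{\mathrm{loc}}$ field to circles behaves as claimed; this is standard but deserves a sentence, and it is precisely the point the paper's trace inequality absorbs. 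Two smaller remarks: your ``main obstacle'' near $\theta=\pm\pi/2$ is not actually an obstacle, since $y=1+r\cos\theta\leq 1+r$ holds on the whole angular range and is the only direction needed to deduce $\int_{1}^{\infty}g(r)(1+r)^{-1}\,\rd r<\infty$ from $\Vert\bv/y\Vert_{2}<\infty$ and $\int_{1}^{\infty}\bigl(\int|\partial_{r}\bv|^{2}\,\rd\theta\bigr)(1+r)\,\rd r<\infty$ from $\Vert\bnabla\bv\Vert_{2}<\infty$, so the good-cone/bad-cap splitting is superfluous; and for the $\ba$ term one can simply note $\int_{-\pi/2}^{\pi/2}(1+r\cos\theta)^{-2}\,\rd\theta=O(1/r)$ instead of arguing about shrinking bad sets.
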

\begin{proof}
First of all, by Hardy inequality, we have $\bu/y\in L^{2}(\Omega)$.
We define the half-ball $\Omega_{n}$ and the half-shell $S_{n}$
by
\begin{align*}
\Omega_{n} & =B((0,1),n)\cap\Omega\,, & S_{n} & =\Omega_{2n}\setminus\Omega_{n}\,,
\end{align*}
with $B((0,1),n)$ the open ball of radius $n$ centered at $(0,1)$.
By using the trace theorem in $S_{1}$, there exists $C>0$ such that
\[
\left\Vert \bu;L^{2}(\partial\Omega_{1})\right\Vert ^{2}\leq\left\Vert \bu;L^{2}(\partial S_{1})\right\Vert ^{2}\leq C\left\Vert \bu;L^{2}(S_{1})\right\Vert ^{2}+C\left\Vert \bnabla\bu;L^{2}(S_{1})\right\Vert ^{2}\,.
\]
By a rescaling argument, we obtain that
\[
\frac{1}{n}\left\Vert \bu;L^{2}(\partial\Omega_{n})\right\Vert ^{2}\leq\frac{C}{n^{2}}\left\Vert \bu;L^{2}(S_{n})\right\Vert ^{2}+C\left\Vert \bnabla\bu;L^{2}(S_{n})\right\Vert ^{2}\,,
\]
and since $y\leq n$ in $\Omega_{n}$, we have
\[
\frac{1}{n}\left\Vert \bu;L^{2}(\partial\Omega_{n})\right\Vert ^{2}\leq C\left\Vert \bu/\by;L^{2}(S_{n})\right\Vert ^{2}+C\left\Vert \bnabla\bu;L^{2}(S_{n})\right\Vert ^{2}\,.
\]
In the limit $n\to\infty$, the right hand-side converges to zero,
because $\bu/y,\bnabla\bu\in L^{2}(\Omega)$ and since the integrals
over $S_{n}$ can be written as the difference of integrals over $\Omega_{2n}$
and \textbf{$\Omega_{n}$}. Finally,
\[
\int_{-\pi/2}^{\pi/2}\left|\bu(r\sin\theta,1+r\cos\theta)\right|^{2}\rd\theta=\frac{1}{2\pi n}\left\Vert \bu;L^{2}(\partial\Omega_{n})\right\Vert \,,
\]
and the result is proved.
\end{proof}
As usual, to show the existence of a weak solution in an unbounded
domain, we first prove, for arbitrary $n\in\mathbb{N}$, the existence
of a weak solution in the domains $\Omega_{n}$ defined in the previous
proof. To this end, we introduce the concept of approximate weak solution
in $\Omega_{n}$ and then apply the Leray-Schauder theorem to prove
the existence of such approximate solutions.
\begin{defn}[approximate weak solution]
\label{def:approx-weak-sol}For $n\in\mathbb{N}$, an approximate
weak solution is a vector field $\bu_{n}=\ba+\bv_{n}$ where $\bv_{n}\in D_{0,\sigma}^{1,2}(\Omega)$
with support in $\Omega_{n}$, which satisfies
\[
\int_{\Omega}\bnabla\bv_{n}:\bnabla\bphi+\int_{\Omega}\left(\bu_{n}\bcdot\bnabla\bu_{n}\right)\bcdot\varphi=0\,,
\]
for arbitrary smooth divergence-free vector-fields $\bphi$ with support
in $\Omega_{n}$.\end{defn}
\begin{lem}[existence of approximate weak solution]
\label{lem:existence-approx-weak-sol}Provided $\bu^{*}$
is small enough, there exists for all $n\in\mathbb{N}$ an approximate
weak solution $\bu_{n}=\ba+\bv_{n}$, with $\left\Vert \bnabla\bv_{n}\right\Vert \leq1$.\end{lem}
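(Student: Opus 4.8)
The plan is to obtain $\bv_{n}$ by a Leray--Schauder fixed point argument in the Hilbert space $H_{n}:=D_{0,\sigma}^{1,2}(\Omega_{n})$ equipped with the inner product $\langle\bu,\bv\rangle=\int_{\Omega_{n}}\bnabla\bu:\bnabla\bv$ (a genuine inner product since $\Omega_{n}$ is bounded). First I would set up the linearised solution operator $\Psi\colon H_{n}\to H_{n}$: given $\bw\in H_{n}$, put $\bu_{\bw}=\ba+\bw$ and let $\bv=\Psi\bw$ be the unique element of $H_{n}$ with
\[
\int_{\Omega_{n}}\bnabla\bv:\bnabla\bphi=\int_{\Omega_{n}}(\bu_{\bw}\otimes\bu_{\bw}):\bnabla\bphi\qquad\text{for all }\bphi\in H_{n}.
\]
The right-hand side is a bounded linear functional on $H_{n}$: since $y\ba\in L^{\infty}(\Omega)$ and $\ba/y\in L^{2}(\Omega)$ one has $\ba\in L^{4}(\Omega)$ with $\|\ba\|_{4}^{2}\le\|y\ba\|_{\infty}\,\|\ba/y\|_{2}$, and $\bw\in L^{4}(\Omega_{n})$ by the Sobolev embedding on the bounded set $\Omega_{n}$, so $\bu_{\bw}\otimes\bu_{\bw}\in L^{2}(\Omega_{n})$ and $\bv$ exists, is unique by the Riesz representation theorem, is divergence-free, supported in $\Omega_{n}$, and obeys $\|\bnabla\bv\|_{2}\le\|\bu_{\bw}\otimes\bu_{\bw}\|_{2}$. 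Integrating by parts (licit because $\bu_{\bw}$ is divergence-free and $\bphi$ has compact support in $\Omega_{n}$), a fixed point $\bv_{n}=\Psi\bv_{n}$ satisfies exactly the defining identity of \defref{approx-weak-sol} with $\bu_{n}=\ba+\bv_{n}$.

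Second, I would show that $\Psi$ is continuous and compact. The embedding $H_{n}\hookrightarrow L^{4}(\Omega_{n})$ is compact (Rellich--Kondrachov, $\Omega_{n}$ bounded), so if $\bw_{j}\rightharpoonup\bw$ in $H_{n}$ then $\bw_{j}\to\bw$ in $L^{4}(\Omega_{n})$, hence $\bu_{\bw_{j}}\otimes\bu_{\bw_{j}}\to\bu_{\bw}\otimes\bu_{\bw}$ in $L^{2}(\Omega_{n})$, and therefore $\|\bnabla(\Psi\bw_{j}-\Psi\bw)\|_{2}\le\|\bu_{\bw_{j}}\otimes\bu_{\bw_{j}}-\bu_{\bw}\otimes\bu_{\bw}\|_{2}\to0$. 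Thus $\Psi$ sends weakly convergent sequences to strongly convergent ones, which yields both continuity and compactness. Writing the equation with $\bu_{\bw}\otimes\bu_{\bw}$ instead of $\bu_{\bw}\bcdot\bnabla\bu_{\bw}$ is precisely what makes this transparent, since no derivative of $\bw$ then appears.

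Third comes the a priori bound, which is the crux of the argument. Suppose $\bv=\lambda\,\Psi\bv$ for some $\lambda\in[0,1]$ and set $\bu=\ba+\bv$. Testing with $\bphi=\bv$ and using $\int(\ba\bcdot\bnabla\bv)\bcdot\bv=\int(\bv\bcdot\bnabla\bv)\bcdot\bv=0$ (divergence-free fields, $\bv$ compactly supported), the surviving terms are
\[
\|\bnabla\bv\|_{2}^{2}=-\lambda\int_{\Omega_{n}}(\ba\bcdot\bnabla\ba)\bcdot\bv-\lambda\int_{\Omega_{n}}(\bv\bcdot\bnabla\ba)\bcdot\bv .
\]
For the first term, integrating by parts rewrites it as $\lambda\int(\ba\bcdot\bnabla\bv)\bcdot\ba$, bounded by $\|\ba\|_{4}^{2}\,\|\bnabla\bv\|_{2}\le\|y\ba\|_{\infty}\,\|\ba/y\|_{2}\,\|\bnabla\bv\|_{2}$. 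For the second, integrating by parts rewrites it as $\lambda\int(\bv\bcdot\bnabla\bv)\bcdot\ba$, which by Hölder and the Hardy inequality $\|\bv/y\|_{2}\le2\|\bnabla\bv\|_{2}$ is bounded by $\|y\ba\|_{\infty}\,\|\bv/y\|_{2}\,\|\bnabla\bv\|_{2}\le2\|y\ba\|_{\infty}\,\|\bnabla\bv\|_{2}^{2}$. Hence
\[
\|\bnabla\bv\|_{2}^{2}\le\|y\ba\|_{\infty}\,\|\ba/y\|_{2}\,\|\bnabla\bv\|_{2}+2\|y\ba\|_{\infty}\,\|\bnabla\bv\|_{2}^{2},
\]
and if $\bu^{*}$ is small enough that $2\|y\ba\|_{\infty}\le\tfrac12$ and $\|y\ba\|_{\infty}\,\|\ba/y\|_{2}\le\tfrac12$, this forces $\|\bnabla\bv\|_{2}\le1$, uniformly in $n$ and in $\lambda$. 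The Leray--Schauder fixed point theorem then produces a fixed point $\bv_{n}\in H_{n}$ with $\|\bnabla\bv_{n}\|_{2}\le1$, and $\bu_{n}=\ba+\bv_{n}$ is the claimed approximate weak solution. I expect the a priori estimate to be the only genuine obstacle: one must choose the two integrations by parts correctly so that the inhomogeneous terms are absorbed into $\|\bnabla\bv\|_{2}^{2}$ with a constant controlled precisely by $\|y\ba\|_{\infty}+\|\ba/y\|_{2}$, using the interpolation $\|\ba\|_{4}^{2}\le\|y\ba\|_{\infty}\|\ba/y\|_{2}$ together with the Hardy inequality; the well-posedness of the linear problem and the compactness of $\Psi$ are routine on the bounded domain $\Omega_{n}$.
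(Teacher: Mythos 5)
Your proposal is correct and follows essentially the same route as the paper: a Riesz-representation/Leray--Schauder fixed point in $D_{0,\sigma}^{1,2}(\Omega_{n})$, complete continuity via the compact embedding into $L^{4}(\Omega_{n})$, and the same a priori energy estimate combining H\"older with the Hardy inequality to get $\left\Vert\bnabla\bv_{n}\right\Vert_{2}\leq\left\Vert y\ba\right\Vert_{\infty}\left\Vert\ba/y\right\Vert_{2}/(1-2\left\Vert y\ba\right\Vert_{\infty})\leq1$ for small data. The only (harmless) cosmetic difference is that you write the nonlinearity in divergence form $\bu\otimes\bu:\bnabla\bphi$ when setting up the solution operator.
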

\begin{proof}
First we note that the trilinear term can be bounded as
\[
\left|\int_{\Omega}\left(\bu_{n}\bcdot\bnabla\bu_{n}\right)\bcdot\bphi\right|\leq\left\Vert \bnabla\bu_{n}\right\Vert _{2}\left\Vert \bu_{n}\right\Vert _{4}\left\Vert \bphi\right\Vert _{4}\,.
\]
Therefore the map
\begin{align*}
H_{0,\sigma}^{1}(\Omega_{n}) & \to\mathbb{R}\\
\bphi & \mapsto-\int_{\Omega}\left(\bu_{n}\bcdot\bnabla\bu_{n}\right)\bcdot\bphi\,,
\end{align*}
is a continuous linear form, and by the Riesz representation theorem,
there exists a map $F_{n}:W_{0,\sigma}^{1,2}(\Omega_{n})\to W_{0,\sigma}^{1,2}(\Omega_{n})$
such that
\[
\left(F_{n}(\bv_{n}),\bphi\right)=-\int_{\Omega}\left(\bu_{n}\bcdot\bnabla\bu_{n}\right)\bcdot\bphi\,.
\]
The map $F_{n}$ is continuous on $W_{0,\sigma}^{1,2}(\Omega_{n})$
when equipped with the $L^{4}$-norm, and, since $W_{0,\sigma}^{1,2}(\Omega_{n})$
is compactly embedded in $L^{4}(\Omega_{n})$, $F_{n}$ is completely
continuous.

The problem of finding an approximate solution is equivalent to solving
the equation
\[
\bv_{n}=F_{n}(\bv_{n})
\]
in $W_{0,\sigma}^{1,2}(\Omega_{n})$. From the Leray-Schauder fixed
point theorem \citep[see for example][Theorem 11.6.]{Gilbarg.Trudinger-EllipticPartialDifferential1988}
to prove the existence of an approximate weak solution it is sufficient
to prove that the set of all possible solutions of the equation
\begin{equation}
\bv_{n}=\lambda F_{n}(\bv_{n})\,,\label{eq:weak-lambda}
\end{equation}
is uniformly bounded in $\lambda\in\left[0,1\right]$.

To this end, we take the scalar product of \eqref{weak-lambda}
with $\bv_{n}$, and after integrations by parts, we get 
\[
\int_{\Omega}\bnabla\bv_{n}:\bnabla\bv_{n}=\lambda\int_{\Omega}\left(\bu_{n}\bcdot\bnabla\bv_{n}\right)\bcdot\ba\,.
\]
Therefore by Hölder inequality, we obtain
\[
\left\Vert \bnabla\bv_{n}\right\Vert _{2}^{2}\leq\lambda\left(\left\Vert \ba/y\right\Vert _{2}+\left\Vert \bv_{n}/y\right\Vert _{2}\right)\left\Vert \bnabla\bv_{n}\right\Vert _{2}\left\Vert y\ba\right\Vert _{\infty}\,,
\]
and therefore by using Hardy inequality,
\[
\left\Vert \bnabla\bv_{n}\right\Vert _{2}\leq\lambda\left(\left\Vert \ba/y\right\Vert _{2}+2\left\Vert \bnabla\bv_{n}\right\Vert _{2}\right)\left\Vert y\ba\right\Vert _{\infty}\,.
\]
For $\lambda\in\left[0,1\right]$, we finally obtain for $n$ big
enough, and $\ba$ small enough,
\[
\left\Vert \bnabla\bv_{n}\right\Vert _{2}\leq\frac{\left\Vert \ba/y\right\Vert _{2}\left\Vert y\ba\right\Vert _{\infty}}{1-2\left\Vert y\ba\right\Vert _{\infty}}\,,
\]
which proves that $\bnabla\bv_{n}$ is uniformly bounded.
\end{proof}
We are now able to take the limit $n\to\infty$ and prove the existence
of a weak solution in $\Omega$:
\begin{proof}[Proof of \thmref{existence-weak}]
By \lemref{existence-approx-weak-sol}, there exists for
any $n\in\mathbb{N}$ an approximate weak-solution $\bv_{n}$ and
the sequence $\left(\bv_{n}\right)_{n\in\mathbb{N}}$ is bounded in
$D_{0,\sigma}^{1,2}(\Omega)$. Therefore, we can extract a subsequence,
denoted also by $\left(\bv_{n}\right)_{n\in\mathbb{N}}$, which converges
weakly to $\bv$ in $D_{0,\sigma}^{1,2}(\Omega)$. Now let $\bphi$
be a test function with compact support in $\Omega$. Then, there
exists $m\in\mathbb{N}$ such that the support of $\bphi$ is in $\Omega_{m}$.
Therefore, we have for any $n\geq m$,
\[
\int_{\Omega}\bnabla\bv_{n}:\bnabla\bphi+\int_{\Omega}\left(\bu_{n}\bcdot\bnabla\bu_{n}\right)\bcdot\bphi=0\,.
\]
By replacing the test function by $\bv_{n}$ and after integration
by parts, we obtain
\[
\int_{\Omega}\bnabla\bv_{n}:\bnabla\bv_{n}\leq\int_{\Omega}\left(\bu_{n}\bcdot\bnabla\bv_{n}\right)\bcdot\ba\,.
\]
Therefore it remains to prove that these last two equations remain
valid in the limit $n\to\infty$. By definition of the weak convergence,
we have
\[
\lim_{n\to\infty}\int_{\Omega}\bnabla\bv_{n}:\bnabla\bphi=\int_{\Omega}\bnabla\bv:\bnabla\bphi\,,
\]
and
\[
\int_{\Omega}\bnabla\bv:\bnabla\bv\leq\liminf_{n\to\infty}\int_{\Omega}\bnabla\bv_{n}:\bnabla\bv_{n}\leq1\,.
\]
Since $\varphi$ has support in $\Omega_{m}$,
\begin{align*}
\left|\int_{\Omega}\left(\bu_{n}\bcdot\bnabla\bu_{n}-\bu\bcdot\bnabla\bu\right)\bcdot\bphi\right| & =\left|\int_{\Omega_{m}}\left(\left(\bu_{n}-\bu\right)\bcdot\bnabla\bu_{n}+\bu\bcdot\bnabla\left(\bu_{n}-\bu\right)\right)\bcdot\bphi\right|\\
 & \leq\left|\int_{\Omega_{m}}\left(\left(\bv_{n}-\bv\right)\bcdot\bnabla\bu_{n}\right)\bcdot\bphi\right|+\left|\int_{\Omega_{m}}\left(\bu\bcdot\bnabla\bphi\right)\bcdot\left(\bv_{n}-\bv\right)\right|\\
 & \leq\left(\left\Vert \bnabla\bu_{n}\right\Vert _{2}\left\Vert \bphi\right\Vert _{\infty}+2\left\Vert \bnabla\bu_{n}\right\Vert _{2}\left\Vert y\bnabla\bphi\right\Vert _{\infty}\right)\left\Vert \bv_{n}-\bv;L_{2}(\Omega_{m})\right\Vert \,,
\end{align*}
and therefore since $D_{0,\sigma}^{1,2}(\Omega_{m})$ is compactly
embedded in $L_{2}(\Omega_{m})$ this proves that
\[
\lim_{n\to\infty}\int_{\Omega}\left(\bu_{n}\bcdot\bnabla\bu_{n}\right)\bcdot\bphi=\int_{\Omega}\left(\bu\bcdot\bnabla\bu\right)\bcdot\bphi\,.
\]

\end{proof}

\section{Uniqueness\label{sec:uniqueness}}

In this section, we prove a weak-strong uniqueness theorem by exploiting
the properties of $(\alpha,q)$-solutions. Namely we prove that any
weak solution satisfying the decay properties \eqref{asol-bounds-ns}
of an $(\alpha,q)$-solution coincides with any weak-solutions for
the same boundary data. The ideas of the proof that are not specific
to the presence of an extension can be found in \citet{Hillairet.Wittwer-Asymptoticdescriptionof2011}
and we refer the reader to this article for some technical details
which are omitted here.
\begin{thm}[weak-strong uniqueness]
\label{thm:uniqueness}\index{Uniqueness!weak solutions in the half-plane}\index{Weak solutions!in the half-plane!uniqueness}\index{Navier-Stokes equations!in the half-plane!uniqueness}Let
be $\bar{\bu}$ a weak solution that satisfies
\begin{align}
\bar{\bu} & \in D_{\sigma}^{1,2}(\Omega)\,, & y\bar{\bu} & \in L^{\infty}(\Omega)\,, & y^{2}\bnabla\bar{\bu} & \in L^{\infty}(\Omega)\,,\label{eq:bound-ubar}
\end{align}
and such that $\left\Vert y\bar{\bu}\right\Vert _{\infty}$ is small
enough. Then any weak solution $\bu$ with boundary value $\bu^{*}=\left.\bar{\bu}\right|_{\partial\Omega}$
that satisfies the energy inequality
\begin{equation}
\int_{\Omega}\bnabla\bu:\bnabla\bv\leq\int_{\Omega}\left(\bu\bcdot\bnabla\bv\right)\ba\,,\label{eq:energy-inequality}
\end{equation}
coincides with $\bar{\bu}$.\end{thm}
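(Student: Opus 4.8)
The plan is a weak--strong uniqueness estimate of the type used by \citet{Hillairet.Wittwer-Asymptoticdescriptionof2011}, the new point being the bookkeeping of the extension~$\ba$. Writing $\bu=\ba+\bv$ and $\bar{\bu}=\ba+\bar{\bv}$ with a common extension $\ba$ (which can be arranged since the boundary data coincide) and $\bv,\bar{\bv}\in D_{0,\sigma}^{1,2}(\Omega)$, the difference $\bw:=\bu-\bar{\bu}=\bv-\bar{\bv}$ lies in $D_{0,\sigma}^{1,2}(\Omega)$, so it suffices to prove $\bnabla\bw\equiv\bzero$; the Hardy inequality then gives $\|\bw/y\|_{2}\le2\|\bnabla\bw\|_{2}=0$ and hence $\bw=\bzero$, i.e.\ $\bu=\bar{\bu}$.

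First I would upgrade the weak formulation \eqref{weak-sol} of the regular solution $\bar{\bu}$ so that it holds against \emph{all} $\bphi\in D_{0,\sigma}^{1,2}(\Omega)$, not only those of compact support. Here the bounds \eqref{bound-ubar} enter: since $\bnabla\bar{\bu}\in L^{2}(\Omega)$ and $y\bar{\bu}\in L^{\infty}(\Omega)$, the Hardy inequality yields
\[
\Bigl|\int_{\Omega}\bigl(\bar{\bu}\bcdot\bnabla\bar{\bu}\bigr)\bcdot\bphi\Bigr|\le\|y\bar{\bu}\|_{\infty}\,\|\bnabla\bar{\bu}\|_{2}\,\|\bphi/y\|_{2}\le2\,\|y\bar{\bu}\|_{\infty}\,\|\bnabla\bar{\bu}\|_{2}\,\|\bnabla\bphi\|_{2}\,,
\]
so the trilinear term is continuous on $D_{0,\sigma}^{1,2}(\Omega)$ and a density argument extends the formulation. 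In particular $\bar{\bu}$ satisfies the energy \emph{equality} $\int_{\Omega}\bnabla\bar{\bu}:\bnabla\bar{\bv}=\int_{\Omega}(\bar{\bu}\bcdot\bnabla\bar{\bv})\bcdot\ba$, and its equation may be tested against $\bv$ and against $\bw$. By the analogous but more delicate argument --- localising by a cut-off, restoring the divergence-free constraint with a Bogovskii correction, and using the decay $y\bar{\bv}\in L^{\infty}(\Omega)$ together with $\bu/y\in L^{2}(\Omega)$ --- the weak formulation \eqref{weak-sol} of $\bu$ can in turn be tested against the regular field $\bar{\bv}$.

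With these four relations in hand I would compute $\|\bnabla\bw\|_{2}^{2}=\int_{\Omega}\bnabla(\bu-\bar{\bu}):\bnabla(\bv-\bar{\bv})$, expand it into the four pairings of $\{\bnabla\bu,\bnabla\bar{\bu}\}$ with $\{\bnabla\bv,\bnabla\bar{\bv}\}$, bound the pairing $\int\bnabla\bu:\bnabla\bv$ by the energy inequality \eqref{energy-inequality} of $\bu$ and evaluate the other three by the relations just obtained, and then integrate by parts repeatedly, using $\bnabla\bcdot\bu=\bnabla\bcdot\bar{\bu}=\bnabla\bcdot\bw=0$ and the decay of $\ba,\bu,\bar{\bu},\bw$ to drop boundary terms and to kill $\int(\bu\bcdot\bnabla\ba)\bcdot\ba$, $\int(\bar{\bu}\bcdot\bnabla\bw)\bcdot\bw$ and $\int(\bw\bcdot\bnabla\bw)\bcdot\bw$. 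Because $\bu$ and $\bar{\bu}$ share their boundary values, every term carrying a factor of $\ba$ cancels, leaving
\[
\|\bnabla\bw\|_{2}^{2}\le\int_{\Omega}\bigl(\bw\bcdot\bnabla\bw\bigr)\bcdot\bar{\bu}\le\|y\bar{\bu}\|_{\infty}\,\|\bw/y\|_{2}\,\|\bnabla\bw\|_{2}\le2\,\|y\bar{\bu}\|_{\infty}\,\|\bnabla\bw\|_{2}^{2}\,,
\]
again by the Hardy inequality. Since $\|y\bar{\bu}\|_{\infty}$ is assumed small, the last term is absorbed on the left, forcing $\bnabla\bw=\bzero$ and hence $\bw=\bzero$.

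I expect the main obstacle to be the second step together with the integrations by parts of the third: on the unbounded domain one must check that every integral is absolutely convergent and, above all, justify that the merely weak solution $\bu$ --- which a priori tests only against compactly supported fields and satisfies only the energy \emph{inequality} --- can be paired with the non-compactly-supported field $\bar{\bv}$, and that the $\ba$-dependent terms genuinely cancel rather than leaving an uncontrolled remainder. This is precisely the material the paper attributes to \citet{Hillairet.Wittwer-Asymptoticdescriptionof2011}, supplemented by the extra bookkeeping due to the extension.
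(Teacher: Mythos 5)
Your proposal follows the paper's proof essentially step for step: same decomposition with a common extension, same extension of the admissible test functions (testing $\bar{\bu}$'s equation against all of $D_{0,\sigma}^{1,2}(\Omega)$ and $\bu$'s equation against $\bar{\bv}$), same use of the energy equality for $\bar{\bu}$ versus the energy inequality for $\bu$, and the same final absorption $\left\Vert \bnabla\bw\right\Vert _{2}^{2}\leq2\left\Vert y\bar{\bu}\right\Vert _{\infty}\left\Vert \bnabla\bw\right\Vert _{2}^{2}$ via the Hardy inequality. The only cosmetic difference is that the paper justifies pairing $\bu$'s equation with $\bar{\bv}$ by continuity of the trilinear form plus density (approximating $\bar{\bv}$ in the norm $\left\Vert \bnabla\bcdot\right\Vert _{2}+\left\Vert y\,\bcdot\right\Vert _{\infty}$) rather than by a cut-off/Bogovskii construction, and it recombines the $\ba$-terms with $\bv$ and $\bar{\bv}$ into $\int_{\Omega}\left(\bu\bcdot\bnabla\bu\right)\bcdot\bar{\bu}+\int_{\Omega}\left(\bar{\bu}\bcdot\bnabla\bar{\bu}\right)\bcdot\bu$ rather than "cancelling" them, but these are the same estimates in slightly different clothing.
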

\begin{rem}
The $(\alpha,q)$-solutions found in \thmref{Navier-Stokes}
satisfy the requirement \eqref{bound-ubar} on $\bar{\bu}$.
\end{rem}
The remaining part of this section is devoted to the proof of this
theorem. To begin with, we prove that integration by parts with respect
to the solution $\bar{\bu}$ is permitted:
\begin{lem}[integration by parts]
\textup{\label{lem:int-by-parts}For any $\bar{\bu}$ that
satisfies \eqref{bound-ubar}, we have
\[
\int_{\Omega}\left(\bw\bcdot\bnabla\bar{\bu}\right)\bcdot\bu+\int_{\Omega}\left(\bw\bcdot\bnabla\bu\right)\bcdot\bar{\bu}=0\,,
\]
for all $\bu,\bw\in D_{\sigma}^{1,2}(\Omega)$ with $\bu/y\in L^{2}(\Omega)$
and $\bw/y\in L^{2}(\Omega)$. We note in particular, that if $\bu$
and $\bw$ are weak solutions, the hypothesis are satisfied.}\end{lem}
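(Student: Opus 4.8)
The plan is to turn the claimed identity into an application of the divergence theorem on the large half-balls $\Omega_{R}$ and to show that the only potentially surviving contribution, the one coming from infinity, vanishes thanks to the weighted bounds on $\bar{\bu}$ combined with the Hardy inequality. First I would check that both integrals are absolutely convergent, so that the statement is meaningful: pointwise one has
\[
\left|\left(\bw\bcdot\bnabla\bar{\bu}\right)\bcdot\bu\right|\leq\left\Vert y^{2}\bnabla\bar{\bu}\right\Vert _{\infty}\frac{\left|\bw\right|}{y}\frac{\left|\bu\right|}{y}\,,\qquad\left|\left(\bw\bcdot\bnabla\bu\right)\bcdot\bar{\bu}\right|\leq\left\Vert y\bar{\bu}\right\Vert _{\infty}\frac{\left|\bw\right|}{y}\left|\bnabla\bu\right|\,,
\]
so that, since $\bw/y,\bu/y\in L^{2}(\Omega)$ and $\bnabla\bu\in L^{2}(\Omega)$, the Cauchy--Schwarz inequality gives integrability over $\Omega$ (for $\bu$ and $\bw$ weak solutions, the membership $\bu/y,\bw/y\in L^{2}$ follows from the extension property in \defref{extension} and from the Hardy inequality applied to the $D_{0,\sigma}^{1,2}$ part).

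The algebraic heart of the matter is that, since $\bnabla\bcdot\bw=0$,
\[
\left(\bw\bcdot\bnabla\bar{\bu}\right)\bcdot\bu+\left(\bw\bcdot\bnabla\bu\right)\bcdot\bar{\bu}=\bw\bcdot\bnabla\left(\bu\bcdot\bar{\bu}\right)=\bnabla\bcdot\bigl[\left(\bu\bcdot\bar{\bu}\right)\bw\bigr]
\]
holds almost everywhere, and the vector field $g\bw$ with $g=\bu\bcdot\bar{\bu}$ is in $L^{1}(\Omega)$ with divergence in $L^{1}(\Omega)$ by the previous step, $g$ being in $W_{\mathrm{loc}}^{1,1}$ as a product of $H_{\mathrm{loc}}^{1}$ functions in dimension two. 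It then remains to prove $\int_{\Omega}\bnabla\bcdot(g\bw)=0$. Introducing a cut-off $\psi_{R}$ with $\psi_{R}=1$ on $\Omega_{R}$, $\psi_{R}=0$ outside $\Omega_{2R}$ and $\left|\bnabla\psi_{R}\right|\lesssim R^{-1}$, and applying the divergence theorem to $\psi_{R}g\bw$ on the Lipschitz domain $\Omega_{2R}$, I would obtain
\[
\int_{\Omega}\psi_{R}\,\bnabla\bcdot(g\bw)=\int_{\partial\Omega}\psi_{R}\,g\,(\bw\bcdot\bn)-\int_{S_{R}}\left(\bnabla\psi_{R}\bcdot\bw\right)g\,.
\]
On the shell $S_{R}$ one has $y\leq1+2R$, hence $R^{-1}\lesssim y^{-1}$, so the last integral is bounded by $C\left\Vert y\bar{\bu}\right\Vert _{\infty}\left\Vert \bw/y;L^{2}(S_{R})\right\Vert \left\Vert \bu/y;L^{2}(S_{R})\right\Vert$, which tends to zero as $R\to\infty$ because $\bw/y,\bu/y\in L^{2}(\Omega)$. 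Passing to the limit $R\to\infty$ and using dominated convergence on the left, one gets $\int_{\Omega}\bnabla\bcdot(g\bw)=\int_{\partial\Omega}g\,(\bw\bcdot\bn)$.

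Finally, the boundary term $\int_{\partial\Omega}g\,(\bw\bcdot\bn)=-\int_{\mathbb{R}}\left(\bu\bcdot\bar{\bu}\right)(x,1)\,w_{2}(x,1)\,\rd x$ vanishes because $\bw$ has zero normal trace on $\partial\Omega$: in every application of this lemma, $\bw$ is a difference of two weak solutions with the same boundary data, hence lies in $D_{0,\sigma}^{1,2}(\Omega)$ and has vanishing trace. I expect the main obstacle to be precisely this control at infinity --- namely ensuring that no boundary contribution survives when $R\to\infty$ --- which is exactly where the decay $y\bar{\bu}\in L^{\infty}$, $y^{2}\bnabla\bar{\bu}\in L^{\infty}$ and the Hardy inequality are indispensable; the remaining points (the Leibniz rule and the validity of the divergence theorem for the $W^{1,1}$ field $\psi_{R}g\bw$ on $\Omega_{2R}$) are routine once these a priori bounds are in place.
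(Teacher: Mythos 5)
Your strategy---rewriting the integrand as $\bnabla\bcdot\bigl[\left(\bu\bcdot\bar{\bu}\right)\bw\bigr]$ using $\bnabla\bcdot\bw=0$, cutting off at scale $R$, and killing the contribution from infinity with the weighted bounds and the Hardy inequality---is a reasonable way to make rigorous what the paper compresses into one sentence: the paper establishes exactly the two H\"older bounds you also prove and then ``integrates by parts, approximating $\bar{\bu}$ by compactly supported functions,'' delegating the details to a proposition of Hillairet--Wittwer. Your absolute-convergence estimates and the vanishing of the shell term $\int_{S_{R}}\left(\bnabla\psi_{R}\bcdot\bw\right)g$ are correct and are precisely where the hypotheses $y\bar{\bu},y^{2}\bnabla\bar{\bu}\in L^{\infty}(\Omega)$ and $\bu/y,\bw/y\in L^{2}(\Omega)$ enter.

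The gap is in your last step, the disposal of the boundary term $\int_{\partial\Omega}\left(\bu\bcdot\bar{\bu}\right)\left(\bw\bcdot\bn\right)$. You discharge it by asserting that in every application $\bw$ is a difference of two weak solutions with the same boundary data and hence lies in $D_{0,\sigma}^{1,2}(\Omega)$. That is neither what the lemma claims nor how it is used: the statement quantifies over \emph{all} $\bw\in D_{\sigma}^{1,2}(\Omega)$ with $\bw/y\in L^{2}(\Omega)$, and the closing remark explicitly includes the case where $\bw$ is a weak solution, whose normal trace on $\left\{ y=1\right\} $ is the (generally nonzero) component $v^{*}$ of the inhomogeneous boundary data. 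Indeed, in \secref{uniqueness} the lemma is invoked with $\bw=\bar{\bu}$ and with $\bw=\bu$ (for instance to obtain $\int_{\Omega}\left(\bar{\bu}\bcdot\bnabla\ba\right)\bcdot\bar{\bv}+\int_{\Omega}\left(\bar{\bu}\bcdot\bnabla\bar{\bv}\right)\bcdot\ba=0$ and $\int_{\Omega}\left(\bu\bcdot\bnabla\ba\right)\bcdot\ba=0$); only one of the applications takes $\bw=\bd\in D_{0,\sigma}^{1,2}(\Omega)$. So your argument, as written, proves a strictly weaker statement than the lemma. To close the gap you would have to show that $\int_{\partial\Omega}\left(\bu\bcdot\bar{\bu}\right)\left(\bw\bcdot\bn\right)$ vanishes under the stated hypotheses---which it does not for arbitrary $\bu$, $\bar{\bu}$, $\bw$ all carrying nonvanishing traces, so some extra structure (zero trace of one of $\bu$, $\bar{\bu}$, or zero normal trace of $\bw$) must be identified and used---or else follow the paper's approximation route, which faces the same trace obstruction when $\bar{\bu}\vert_{\partial\Omega}\neq\bzero$. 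Your computation has the merit of making this boundary contribution explicit, but the proof is incomplete exactly at the point where the lemma's stated generality is at stake.
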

\begin{proof}
By using Hölder inequality, we have the bounds
\[
\left|\int_{\Omega}\left(\bw\bcdot\bnabla\bar{\bu}\right)\bcdot\bu\right|\leq\left\Vert \bw/y\right\Vert _{2}\left\Vert y^{2}\bnabla\bar{\bu}\right\Vert _{\infty}\left\Vert \bu/y\right\Vert _{2}\,,
\]
and
\[
\left|\int_{\Omega}\left(\bw\bcdot\bnabla\bu\right)\bcdot\bar{\bu}\right|\leq\left\Vert \bw/y\right\Vert _{2}\left\Vert \bnabla\bu\right\Vert _{2}\left\Vert y\bar{\bu}\right\Vert _{\infty}\,.
\]
Then, the result follows by an integration by parts, where $\bar{\bu}$
is approximated by compactly supported functions \citep[see for example][Proposition 21]{Hillairet.Wittwer-Asymptoticdescriptionof2011}.

Finally, if $\bu=\ba+\bv$ is a weak solution, we have by hypothesis
$\ba/y\in L^{2}(\Omega)$ and by Hardy inequality $\bv/y\in L^{2}(\Omega)$,
since $\bv\in D_{0,\sigma}^{1}$.
\end{proof}
Next we prove some results on the extension of allowed test functions
in the definition of weak solutions:
\begin{lem}
\label{lem:vbar-in-u}If $\bu$ is a weak solution, then
\[
\int_{\Omega}\bnabla\bu:\bnabla\bar{\bv}+\int_{\Omega}\left(\bu\bcdot\bnabla\bu\right)\bcdot\bar{\bv}=0\,,
\]
for any $\bar{\bv}\in D_{0,\sigma}^{1}(\Omega)$ such that $y\bar{\bv}\in L^{\infty}(\Omega)$.\end{lem}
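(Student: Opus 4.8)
The plan is to reduce the claim to the defining property \eqref{weak-sol} of a weak solution by approximating $\bar\bv$ by admissible test functions. First I would check that both integrals in the asserted identity are finite. Writing $\bu=\ba+\bv$ with $\ba$ an extension and $\bv\in D_{0,\sigma}^{1,2}(\Omega)$, we have $\bnabla\bu=\bnabla\ba+\bnabla\bv\in L^2(\Omega)$, so $\int_\Omega\bnabla\bu:\bnabla\bar\bv$ is finite since $\bnabla\bar\bv\in L^2(\Omega)$. Likewise $\bu/y\in L^2(\Omega)$, because $\ba/y\in L^2(\Omega)$ by definition of an extension and $\bv/y\in L^2(\Omega)$ by the Hardy inequality; hence
\[
\bigl|(\bu\bcdot\bnabla\bu)\bcdot\bar\bv\bigr|\leq\frac{|\bu|}{y}\,|\bnabla\bu|\,\bigl(y|\bar\bv|\bigr)\leq\|y\bar\bv\|_\infty\,\frac{|\bu|}{y}\,|\bnabla\bu|\in L^1(\Omega)\,,
\]
so the nonlinear term is finite as well.

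Next I would pick a sequence $\bphi_j\in C_{0,\sigma}^\infty(\Omega)$ with three properties: $\bnabla\bphi_j\to\bnabla\bar\bv$ in $L^2(\Omega)$, $\bphi_j\to\bar\bv$ almost everywhere in $\Omega$, and, crucially, $M:=\sup_j\|y\bphi_j\|_\infty<\infty$. Granting such a sequence, apply \eqref{weak-sol} to each $\bphi_j$,
\[
\int_\Omega\bnabla\bu:\bnabla\bphi_j+\int_\Omega(\bu\bcdot\bnabla\bu)\bcdot\bphi_j=0\,,
\]
and pass to the limit $j\to\infty$. The linear term converges to $\int_\Omega\bnabla\bu:\bnabla\bar\bv$ because $\bnabla\bu\in L^2(\Omega)$. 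For the nonlinear term I would invoke dominated convergence: $(\bu\bcdot\bnabla\bu)\bcdot\bphi_j\to(\bu\bcdot\bnabla\bu)\bcdot\bar\bv$ pointwise a.e., and since $|y\bar\bv|\leq M$ a.e.\ (because $\bphi_j\to\bar\bv$ a.e.), all these functions are dominated by $M\,(|\bu|/y)\,|\bnabla\bu|$, which lies in $L^1(\Omega)$ by the previous step. This gives $\int_\Omega(\bu\bcdot\bnabla\bu)\bcdot\bphi_j\to\int_\Omega(\bu\bcdot\bnabla\bu)\bcdot\bar\bv$, and the identity follows. Note that no integration by parts on $\bu$ is needed, which is fortunate since $\bu$ need not be in $L^4(\Omega)$.

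The main obstacle is the construction of the approximating sequence, and in particular the uniform weighted bound $\sup_j\|y\bphi_j\|_\infty<\infty$; this is exactly what the weight condition $y\bar\bv\in L^\infty(\Omega)$ in the hypothesis serves to make possible. One starts from a cut-off $\zeta_R\bar\bv$, which is not divergence-free, restores the constraint by a Bogovskii-type corrector supported in the region where $\bnabla\zeta_R$ lives, and then mollifies. The delicate point is that, because $\partial\Omega$ is unbounded, a naive radial cut-off produces a corrector whose weighted sup-norm $\|y\,(\cdot)\|_\infty$ is not bounded uniformly in $R$; one therefore needs a cut-off adapted to the geometry of the domain (equivalently, a stream-function formulation) together with a scaling estimate for the corrector on dyadic pieces of $\Omega$, using the two-dimensional embedding $W^{1,p}\hookrightarrow L^\infty$ for $p>2$. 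This is the type of approximation carried out in \citet[see for example][Proposition~21]{Hillairet.Wittwer-Asymptoticdescriptionof2011}, to which we refer for the details.
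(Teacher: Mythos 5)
Your proof is correct and follows essentially the same route as the paper: reduce the identity to the definition \eqref{weak-sol} of a weak solution by approximating $\bar{\bv}$ with smooth, compactly supported, divergence-free fields while keeping control of $\Vert y\,\bcdot\Vert_{\infty}$, then pass to the limit using $\bnabla\bu\in L^{2}(\Omega)$ and $\bu/y\in L^{2}(\Omega)$ (the latter from the definition of the extension plus Hardy's inequality). The only difference is in the limit passage: the paper exhibits the whole expression as a continuous linear form in the norm $\Vert\bnabla\bphi\Vert_{2}+\Vert y\bphi\Vert_{\infty}$ (splitting $\bu\bcdot\bnabla\bu$ into $\ba\bcdot\bnabla\bu$ and $\bv\bcdot\bnabla\bu$) and invokes density in that norm, whereas you use dominated convergence, which only requires a uniform bound on $\Vert y\bphi_{j}\Vert_{\infty}$ together with a.e.\ convergence --- a marginally weaker demand on the approximating sequence, whose construction both you and the paper defer to \citet{Hillairet.Wittwer-Asymptoticdescriptionof2011}.
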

\begin{proof}
We have
\begin{align*}
\left|\int_{\Omega}\bnabla\bu:\bnabla\bphi+\int_{\Omega}\left(\bu\bcdot\bnabla\bu\right)\bcdot\bphi\right| & \leq\left|\int_{\Omega}\bnabla\bu:\bnabla\bphi\right|+\left|\int_{\Omega}\left(\ba\bcdot\bnabla\bu\right)\bcdot\bphi\right|+\left|\int_{\Omega}\left(\bv\bcdot\bnabla\bu\right)\bcdot\bphi\right|\\
 & \begin{aligned}\leq\left\Vert \bnabla\bu\right\Vert _{2}\left\Vert \bnabla\bphi\right\Vert _{2} & +\left\Vert y\ba\right\Vert _{\infty}\left\Vert \bnabla\bu\right\Vert _{2}\left\Vert \bphi/y\right\Vert _{2}\\
 & +\left\Vert \bv/y\right\Vert _{2}\left\Vert \bnabla\bu\right\Vert _{2}\left\Vert y\bphi\right\Vert _{\infty}
\end{aligned}
\\
 & \leq\left\Vert \bnabla\bu\right\Vert _{2}\left(\left\Vert \bnabla\bphi\right\Vert _{2}+2\left\Vert y\ba\right\Vert _{\infty}\left\Vert \bnabla\bphi\right\Vert _{2}+2\left\Vert \bnabla\bv\right\Vert _{2}\left\Vert y\bphi\right\Vert _{\infty}\right)\,,
\end{align*}
so that the expression under consideration defines a linear form in
$\varphi$. Since we can approximate $\bar{\bv}$ by $\left(\bar{\bv}_{n}\right)_{n\in\mathbb{N}}\in C_{0,\sigma}^{\infty}(\Omega)$,
such that $\left\Vert \bnabla\bar{\bv}-\bnabla\bar{\bv}_{n}\right\Vert _{2}+\left\Vert y\bar{\bv}-y\bar{\bv}_{n}\right\Vert _{\infty}\to0$
as $n\to\infty$, this proves the lemma.\end{proof}
\begin{lem}
\label{lem:v-in-ubar}If $\bar{\bu}$ is a weak solution
such that $y\bar{\bu}\in L^{\infty}(\Omega)$, then
\[
\int_{\Omega}\bnabla\bar{\bu}:\bnabla\bv+\int_{\Omega}\left(\bar{\bu}\bcdot\bnabla\bar{\bu}\right)\bcdot\bv=0\,,
\]
for any $\bv\in D_{0,\sigma}^{1}(\Omega)$.\end{lem}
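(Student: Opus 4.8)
The plan is to argue, just as in the proof of \lemref{vbar-in-u}, that the expression on the left-hand side defines a continuous linear functional of $\bv$ on $D_{0,\sigma}^{1}(\Omega)$ which vanishes on the dense subspace $C_{0,\sigma}^{\infty}(\Omega)$, and therefore vanishes identically. First I would record the basic ingredients: writing $\bar\bu=\ba+\bar\bv$ with $\ba$ an extension of $\bu^{*}$ and $\bar\bv\in D_{0,\sigma}^{1,2}(\Omega)$, one has $\bar\bu/y\in L^{2}(\Omega)$, since $\ba/y\in L^{2}(\Omega)$ by \defref{extension} and $\bar\bv/y\in L^{2}(\Omega)$ by the Hardy inequality; moreover $\bnabla\bar\bu\in L^{2}(\Omega)$ and, by hypothesis, $y\bar\bu\in L^{\infty}(\Omega)$.

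Then, for $\bv\in D_{0,\sigma}^{1}(\Omega)$, I would estimate the two terms separately. The first is handled by Cauchy--Schwarz,
\[
\left|\int_{\Omega}\bnabla\bar\bu:\bnabla\bv\right|\leq\left\Vert \bnabla\bar\bu\right\Vert _{2}\left\Vert \bnabla\bv\right\Vert _{2}\,.
\]
For the trilinear term, bounding the integrand pointwise by $(y|\bar\bu|)\,|\bnabla\bar\bu|\,(|\bv|/y)$ and applying H\"older's inequality followed by the Hardy inequality gives
\[
\left|\int_{\Omega}\left(\bar\bu\bcdot\bnabla\bar\bu\right)\bcdot\bv\right|\leq\left\Vert y\bar\bu\right\Vert _{\infty}\left\Vert \bnabla\bar\bu\right\Vert _{2}\left\Vert \bv/y\right\Vert _{2}\leq2\left\Vert y\bar\bu\right\Vert _{\infty}\left\Vert \bnabla\bar\bu\right\Vert _{2}\left\Vert \bnabla\bv\right\Vert _{2}\,.
\]
In particular the integral $\int_{\Omega}(\bar\bu\bcdot\bnabla\bar\bu)\bcdot\bv$ is absolutely convergent, and the map
\[
\bv\longmapsto\int_{\Omega}\bnabla\bar\bu:\bnabla\bv+\int_{\Omega}\left(\bar\bu\bcdot\bnabla\bar\bu\right)\bcdot\bv
\]
is a continuous linear functional on $D_{0,\sigma}^{1}(\Omega)$.

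Finally, by \defref{weak-solution} this functional vanishes on every $\bphi\in C_{0,\sigma}^{\infty}(\Omega)$, and since $C_{0,\sigma}^{\infty}(\Omega)$ is dense in $D_{0,\sigma}^{1}(\Omega)=D_{0,\sigma}^{1,2}(\Omega)$ by definition of the latter as a completion, the functional vanishes on all of $D_{0,\sigma}^{1}(\Omega)$, which is exactly the assertion. There is no substantial analytic difficulty here; the only point requiring care is choosing the estimates so that \emph{both} terms are controlled by $\left\Vert \bnabla\bv\right\Vert _{2}$ alone, which is possible precisely because the weak solution $\bar\bu$ carries the decay $y\bar\bu\in L^{\infty}(\Omega)$ --- note that, unlike in \lemref{int-by-parts}, the stronger bound $y^{2}\bnabla\bar\bu\in L^{\infty}(\Omega)$ from \eqref{bound-ubar} is not needed, and consequently no weighted $L^{\infty}$ control of the test function $\bv$ is required, in contrast with \lemref{vbar-in-u}.
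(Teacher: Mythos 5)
Your proof is correct and follows essentially the same route as the paper: both establish that the left-hand side defines a bounded linear functional of the test function on $D_{0,\sigma}^{1}(\Omega)$, using H\"older together with $y\bar{\bu}\in L^{\infty}(\Omega)$ and the Hardy inequality to control the trilinear term by $\left\Vert \bnabla\bv\right\Vert _{2}$, and then conclude by density of smooth compactly supported divergence-free fields. Your explicit remark that $y^{2}\bnabla\bar{\bu}\in L^{\infty}(\Omega)$ is not needed here is accurate and matches the paper's weaker hypothesis for this lemma.
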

\begin{proof}
We have
\begin{align*}
\left|\int_{\Omega}\bnabla\bar{\bu}:\bnabla\varphi+\int_{\Omega}\left(\bar{\bu}\bcdot\bnabla\bar{\bu}\right)\bcdot\bphi\right| & \leq\left|\int_{\Omega}\bnabla\bar{\bu}:\bnabla\bphi\right|+\left|\int_{\Omega}\left(\bar{\bu}\bcdot\bnabla\bar{\bu}\right)\bcdot\bphi\right|\\
 & \leq\left\Vert \bnabla\bar{\bu}\right\Vert _{2}\left\Vert \bnabla\bphi\right\Vert _{2}+\left\Vert y\bar{\bu}\right\Vert _{\infty}\left\Vert \bnabla\bar{\bu}\right\Vert _{2}\left\Vert \bphi/y\right\Vert _{2}\\
 & \le\left\Vert \bnabla\bar{\bu}\right\Vert _{2}\left(1+2\left\Vert y\bar{\bu}\right\Vert _{\infty}\right)\left\Vert \bnabla\bphi\right\Vert _{2}\,,
\end{align*}
and since the form is linear in $\bphi$, the lemma is proved.
\end{proof}
We now prove that the weak solution $\bar{\bu}$ satisfies an energy
equality:
\begin{lem}
Any weak solution $\bar{\bu}$ which satisfies \eqref{bound-ubar}
verifies the energy equality
\begin{equation}
\int_{\Omega}\bnabla\bar{\bu}:\bnabla\bar{\bv}=\int_{\Omega}\left(\bar{\bu}\bcdot\bnabla\bar{\bv}\right)\ba\,.\label{eq:energy-equality}
\end{equation}
\end{lem}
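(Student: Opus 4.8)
The plan is to test the weak formulation satisfied by $\bar{\bu}$ against its own solenoidal part and then to move the derivative inside the convective term by means of the integration by parts of \lemref{int-by-parts}. Write $\bar{\bu}=\ba+\bar{\bv}$, where $\bar{\bv}\in D_{0,\sigma}^{1,2}(\Omega)$ is the divergence-free part furnished by the definition of a weak solution. Since $\bar{\bu}$ is a weak solution with $y\bar{\bu}\in L^{\infty}(\Omega)$, the field $\bar{\bv}$ is an admissible test function in \lemref{v-in-ubar}, which gives
\[
\int_{\Omega}\bnabla\bar{\bu}:\bnabla\bar{\bv}=-\int_{\Omega}\left(\bar{\bu}\bcdot\bnabla\bar{\bu}\right)\bcdot\bar{\bv}\,.
\]
It thus remains to identify the right-hand side with $\int_{\Omega}\left(\bar{\bu}\bcdot\bnabla\bar{\bv}\right)\ba$, which is precisely \eqref{energy-equality}.

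To this end I would apply \lemref{int-by-parts} with $\bw=\bar{\bu}$ and $\bu=\bar{\bv}$. These choices are admissible: $\bar{\bv}/y\in L^{2}(\Omega)$ by the Hardy inequality since $\bar{\bv}\in D_{0,\sigma}^{1,2}(\Omega)$, and hence $\bar{\bu}/y=\ba/y+\bar{\bv}/y\in L^{2}(\Omega)$ because an extension satisfies $\ba/y\in L^{2}(\Omega)$ by definition. The lemma then yields
\[
\int_{\Omega}\left(\bar{\bu}\bcdot\bnabla\bar{\bu}\right)\bcdot\bar{\bv}=-\int_{\Omega}\left(\bar{\bu}\bcdot\bnabla\bar{\bv}\right)\bcdot\bar{\bu}=-\int_{\Omega}\left(\bar{\bu}\bcdot\bnabla\bar{\bv}\right)\ba-\int_{\Omega}\left(\bar{\bu}\bcdot\bnabla\bar{\bv}\right)\bcdot\bar{\bv}\,,
\]
splitting $\bar{\bu}=\ba+\bar{\bv}$ in the undifferentiated slot (both resulting integrals are finite by the estimate displayed below, together with Hardy). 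Substituting into the first identity reduces the claim to the single cancellation $\int_{\Omega}\left(\bar{\bu}\bcdot\bnabla\bar{\bv}\right)\bcdot\bar{\bv}=0$.

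This cancellation is the only non-routine point; it is the usual skew-symmetry of the convective term, $\int_{\Omega}\left(\bar{\bu}\bcdot\bnabla\bv\right)\bcdot\bv=0$, valid because $\bnabla\bcdot\bar{\bu}=0$. For $\bv\in C_{0,\sigma}^{\infty}(\Omega)$ it follows by writing the integrand as $\tfrac{1}{2}\,\bar{\bu}\bcdot\bnabla|\bv|^{2}$ and integrating by parts, the boundary term vanishing by compact support and the interior term by $\bnabla\bcdot\bar{\bu}=0$. To pass to $\bar{\bv}\in D_{0,\sigma}^{1,2}(\Omega)$ I would use the continuity estimate
\[
\bigl|\int_{\Omega}\left(\bar{\bu}\bcdot\bnabla\bv\right)\bcdot\bw\bigr|\leq\left\Vert y\bar{\bu}\right\Vert _{\infty}\left\Vert \bnabla\bv\right\Vert _{2}\left\Vert \bw/y\right\Vert _{2}\,,
\]
which, with the Hardy inequality, shows that the trilinear form is continuous in its last two arguments for the $D_{0,\sigma}^{1,2}$-topology; approximating $\bar{\bv}$ by smooth compactly supported solenoidal fields then passes the identity to the limit. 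The main obstacle, modest as it is, lies in justifying this density argument in the present weighted spaces, which is handled exactly as in \citet{Hillairet.Wittwer-Asymptoticdescriptionof2011}; this completes the verification of \eqref{energy-equality}.
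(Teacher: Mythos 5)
Your proof is correct and follows essentially the same route as the paper's: test \lemref{v-in-ubar} against $\bar{\bv}$, use \lemref{int-by-parts} to move the derivative in the convective term, and reduce everything to the cancellation $\int_{\Omega}\left(\bar{\bu}\bcdot\bnabla\bar{\bv}\right)\bcdot\bar{\bv}=0$. The only (cosmetic) difference is that the paper obtains this last cancellation directly from \lemref{int-by-parts} as well, applied with $\bar{\bv}$ in place of $\bar{\bu}$ (note that $\bar{\bv}=\bar{\bu}-\ba$ also satisfies \eqref{bound-ubar}), so your separate skew-symmetry and density argument, while valid, duplicates work already done in that lemma.
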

\begin{proof}
By \lemref{v-in-ubar}, we have
\[
\int_{\Omega}\bnabla\bar{\bu}:\bnabla\bar{\bv}+\int_{\Omega}\left(\bar{\bu}\bcdot\bnabla\bar{\bu}\right)\bcdot\bar{\bv}=0\,,
\]
and by \lemref{int-by-parts},
\begin{align*}
\int_{\Omega}\left(\bar{\bu}\bcdot\bnabla\bar{\bv}\right)\bcdot\bar{\bv} & =0\,, & \int_{\Omega}\left(\bar{\bu}\bcdot\bnabla\ba\right)\bcdot\bar{\bv}+\int_{\Omega}\left(\bar{\bu}\bcdot\bnabla\bar{\bv}\right)\bcdot\ba & =0\,,
\end{align*}
so we obtain the energy equality
\[
\int_{\Omega}\bnabla\bar{\bu}:\bnabla\bar{\bv}=-\int_{\Omega}\left(\bar{\bu}\bcdot\bnabla\ba\right)\bcdot\bar{\bv}=\int_{\Omega}\left(\bar{\bu}\bcdot\bnabla\bar{\bv}\right)\bcdot\ba\,.
\]

\end{proof}
We now have the necessary tools in order to prove the main theorem
of this section:
\begin{proof}[Proof of \thmref{uniqueness}]
Let $\bu$ and $\bar{\bu}$ be two weak solutions with the same boundary
conditions, so $\bd=\bu-\bar{\bu}\in D_{0,\sigma}^{1,2}(\Omega)$,
see for example \citet[Theorem II.7.7]{Galdi-IntroductiontoMathematical2011}.
Then, by using the scalar product on $D_{0,\sigma}^{1,2}(\Omega)$,
we have
\begin{align*}
\left\Vert \bnabla\bd\right\Vert _{2}^{2} & =\int_{\Omega}\left(\bnabla\bu-\bnabla\bar{\bu}\right):\left(\bnabla\bv-\bnabla\bar{\bv}\right)\\
 & =\int_{\Omega}\bnabla\bu:\bnabla\bv+\int_{\Omega}\bnabla\bar{\bu}:\bnabla\bar{\bv}-\int_{\Omega}\bnabla\bu:\bnabla\bar{\bv}-\int_{\Omega}\bnabla\bar{\bu}:\bnabla\bv\,.
\end{align*}
By using \lemref{vbar-in-u, v-in-ubar}, the energy
equality \eqref{energy-equality} and the energy inequality
\eqref{energy-inequality}, we have
\[
\left\Vert \bnabla\bd\right\Vert _{2}^{2}\leq\int_{\Omega}\left(\bu\bcdot\bnabla\bv\right)\bcdot\ba+\int_{\Omega}\left(\bar{\bu}\bcdot\bnabla\bar{\bv}\right)\bcdot\ba+\int_{\Omega}\left(\bu\bcdot\bnabla\bu\right)\bcdot\bar{\bv}+\int_{\Omega}\left(\bar{\bu}\bcdot\bnabla\bar{\bu}\right)\bcdot\bv\,.
\]
Since $\bu/y\in L^{2}(\Omega)$ and $\bar{\bu}/y\in L^{2}(\Omega)$
by using Hardy inequality and \lemref{int-by-parts} we
have
\begin{align*}
\int_{\Omega}\left(\bu\bcdot\bnabla\ba\right)\bcdot\ba & =0\,, & \int_{\Omega}\left(\bar{\bu}\bcdot\bnabla\ba\right)\bcdot\ba & =0\,,
\end{align*}
which allows us to rewrite the bound as
\begin{align*}
\left\Vert \bnabla\bd\right\Vert _{2}^{2} & \leq\int_{\Omega}\left(\bu\bcdot\bnabla\bu\right)\bcdot\ba+\int_{\Omega}\left(\bar{\bu}\bcdot\bnabla\bar{\bu}\right)\bcdot\ba+\int_{\Omega}\left(\bu\bcdot\bnabla\bu\right)\bcdot\bar{\bv}+\int_{\Omega}\left(\bar{\bu}\bcdot\bnabla\bar{\bu}\right)\bcdot\bv\\
 & \leq\int_{\Omega}\left(\bu\bcdot\bnabla\bu\right)\bcdot\bar{\bu}+\int_{\Omega}\left(\bar{\bu}\bcdot\bnabla\bar{\bu}\right)\bcdot\bu\,.
\end{align*}
By using \lemref{int-by-parts}, we integrate the second
term by parts,
\[
\left\Vert \bnabla\bd\right\Vert _{2}^{2}\leq\int_{\Omega}\left(\bu\bcdot\bnabla\bu\right)\bcdot\bar{\bu}-\int_{\Omega}\left(\bar{\bu}\bcdot\bnabla\bu\right)\bcdot\bar{\bu}=\int_{\Omega}\left(\bd\bcdot\bnabla\bu\right)\bcdot\bar{\bu}\,.
\]
Again by \lemref{int-by-parts}, we have
\[
\int_{\Omega}\left(\bd\bcdot\bnabla\bar{\bu}\right)\bcdot\bar{\bu}=0\,,
\]
so by Hardy inequality,
\[
\left\Vert \bnabla\bd\right\Vert _{2}^{2}\leq\left|\int_{\Omega}\left(\bd\bcdot\bnabla\bd\right)\bcdot\bar{\bu}\right|\leq\left\Vert \bd/y\right\Vert _{2}\left\Vert \bnabla\bd\right\Vert _{2}\left\Vert y\bar{\bu}\right\Vert _{\infty}\leq2\left\Vert y\bar{\bu}\right\Vert _{\infty}\left\Vert \bnabla\bd\right\Vert _{2}^{2}\,.
\]
Therefore, if $\left\Vert y\bar{\bu}\right\Vert _{\infty}$ is small
enough, we obtain that $\bd=\bzero$, \emph{i.e.} $\bu=\bar{\bu}$.
\end{proof}

\section{Numerical simulations\index{Numerical simulations!in the half-plane}}

In order to simulate this problem numerically, we truncate the domain
$\Omega$ to a ball of radius $R=10^{4}$, $\Omega_{R}=\Omega\cap B((0,1),R)$.
On the bottom boundary we take an antisymmetric perturbation of a
symmetric Jeffery-Hamel,
\begin{equation}
\left.\bu\right|_{\left[-R,R\right]\times\{1\}}=\bu_{\Phi}^{0}+\frac{\nu}{r}\sin(2\theta)\be_{r}\,,\label{eq:numerics-BC}
\end{equation}
and on the artificial boundary $\Gamma$, which is the upper half
circle of radius $R$, we take 
\[
\left.\bu\right|_{\Gamma}=\bu_{\Phi}^{0}\,.
\]
In \figref{plot}, we represent the velocity field $\bu$
multiplied by $r$ in order to see the behavior at large distances.
For $\nu=0$, this corresponds to the Jeffery-Hamel solutions $\bu_{\Phi}^{0}$
which are scale-invariant. For negative fluxes $\Phi<0$, small perturbations
have almost no effect on the behavior at large distances, so the asymptotic
term is probably given by the Jeffery-Hamel solution $\bu_{\Phi}^{0}$.
Conversely for $\Phi>0$, even a small perturbation drastically change
the behavior of the solution at large distances by somehow rotating
the region where the magnitude of the velocity is large. In the case,
the asymptotic behavior is very likely not given by the Jeffery-Hamel
solution $\bu_{\Phi}^{0}$. This conclusion can also be seen in \figref{profile},
where we plot the velocity field $\bu$ in polar coordinates multiplied
by $r$ on the half-circle $\bigl\{10^{3}\left(\sin\theta,\cos\theta\right),\theta\in\bigl[\frac{-\pi}{2};\frac{\pi}{2}\bigr]\bigr\}$
in terms of $\nu$.

\begin{figure}[p]
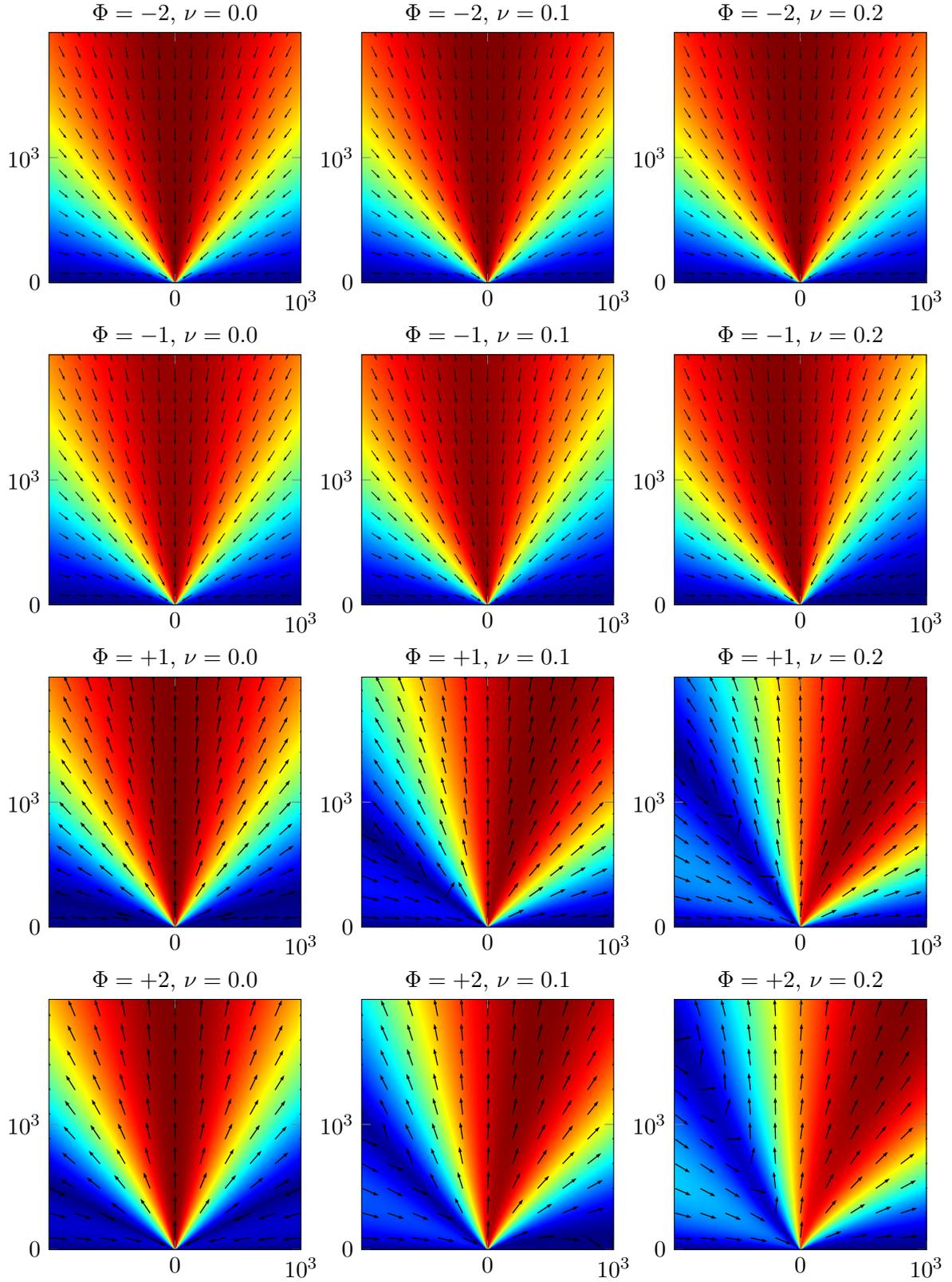

\includefigure{plot}

\caption{\label{fig:plot}Numerical results for the velocity $\bu$
multiplied by $r$, for the boundary condition \eqref{numerics-BC}
in a domain of size $R=10^{4}$, for various values of the flux $\Phi$
and of the perturbation $\nu$.}
\end{figure}
\begin{figure}[p]
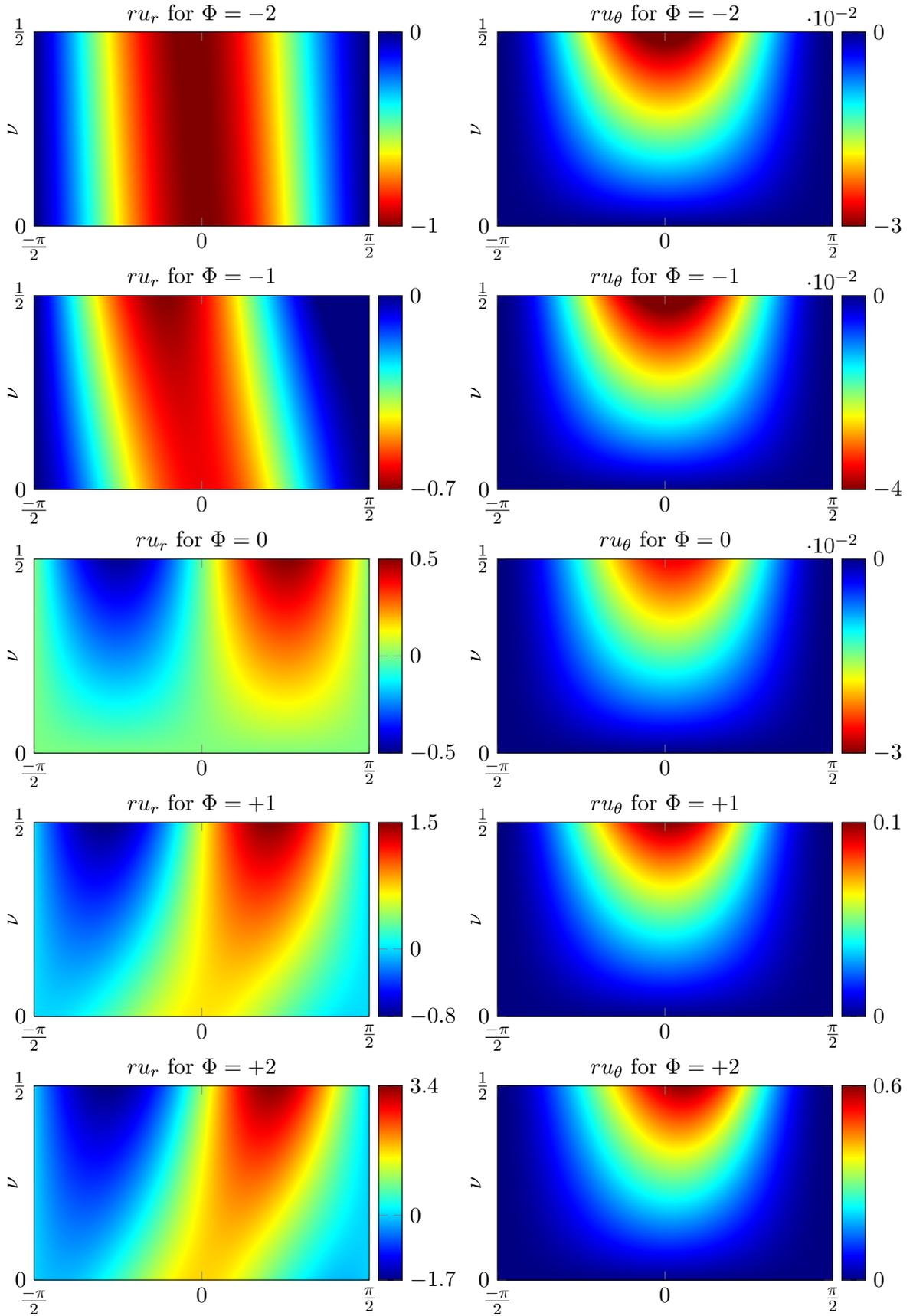

\includefigure{profile}

\caption{\label{fig:profile}Profiles of $ru_{r}$ and $ru_{\theta}$
on the half-circle of radius $10^{3}$ in term of $\theta\in[-\pi/2,\pi/2]$
and $\nu\in[0,1]$.}
\end{figure}
\newpage{}

\appendix

\section{Jeffery-Hamel solutions with small flux\label{sec:JH-phi}}

A Jeffery-Hamel solution in the upper half-plane\index{Jeffery-Hamel solutions!existence},
\[
\Bigl\{\left(r\sin\theta,r\cos\theta\right),\, r>0\text{ and }\theta\in\big(\tfrac{-\pi}{2},\tfrac{\pi}{2}\big)\Bigr\}\,,
\]
is a radial solution of the Navier-Stokes equations with zero velocity
on the boundary and whose velocity norm is $f(\theta)/r$. More explicitly
$f$ has to satisfy the boundary value problem \eqref{JH&BC}. Here
we prove an existence theorem:
\begin{thm}
\label{thm:JH-phi}For every small enough value of the flux
$\phi$, the Jeffery-Hamel equation 
\begin{align}
f^{\prime\prime}+f^{2}+4f & =2C\,, & f\big(\tfrac{\pm\pi}{2}\big) & =0\,,\label{eq:JH&BC}
\end{align}
admits a symmetric solution,
\[
f_{\phi}^{0}(\theta)=\frac{2\phi}{\pi}\cos^{2}(\theta)+O(\phi^{3/2})\,,
\]
and in addition if $\phi<0$, two quasiantisymmetric solutions,
\[
f_{\phi}^{\pm1}(\theta)=\pm\sqrt{\frac{-48\phi}{\pi}}\sin(2\theta)+O(\phi)\,.
\]
\end{thm}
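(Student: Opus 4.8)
The plan is to exploit a resonance of the linear part of \eqref{JH&BC}. The linearisation at $f\equiv0$ is the operator $L=\partial_{\theta}^{2}+4$ with Dirichlet conditions on $\bigl(-\tfrac{\pi}{2},\tfrac{\pi}{2}\bigr)$; since $-\partial_{\theta}^{2}$ has Dirichlet eigenvalues $1,4,9,16,\dots$, the operator $L$ has a one-dimensional kernel, spanned by the \emph{odd} eigenfunction $\psi(\theta)=\sin(2\theta)$, whereas on even functions $-\partial_{\theta}^{2}$ has eigenvalues $1,9,25,\dots$ and $L$ is invertible. This degeneracy, which is exactly the feature distinguishing the half-plane case $\beta=\tfrac{\pi}{2}$, is the source of the bifurcation at $\phi=0$, so a naive implicit function argument fails and a Lyapunov--Schmidt reduction is required. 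I would recast \eqref{JH&BC} as $\mathcal{N}(f,C):=f''+f^{2}+4f-2C=0$ on $E=\{f\in H^{2}:f(\pm\tfrac{\pi}{2})=0\}$, with $C\in\mathbb{R}$ a free parameter, and recover the flux afterwards from $\phi=\int_{-\pi/2}^{\pi/2}f\,\rd\theta$.

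For the symmetric solution I would restrict $\mathcal{N}$ to the subspace of even functions, on which $L$ is invertible; the implicit function theorem then gives, for each small $C$, a unique small even solution $f_{C}^{0}$, analytic in $C$, with leading term $f_{C}^{0}=L^{-1}(2C)+O(C^{2})=C\cos^{2}\theta+O(C^{2})$. Its flux satisfies $\phi=\int_{-\pi/2}^{\pi/2}f_{C}^{0}\,\rd\theta=\tfrac{\pi}{2}C+O(C^{2})$, with nonzero derivative $\tfrac{\pi}{2}$ at $C=0$; inverting gives $C=\tfrac{2\phi}{\pi}+O(\phi^{2})$ and hence $f_{\phi}^{0}=\tfrac{2\phi}{\pi}\cos^{2}\theta+O(\phi^{3/2})$.

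For the quasiantisymmetric branches I would split $f=a\psi+w$ with $w$ in the $L^{2}$-orthogonal complement $\psi^{\perp}$, let $P$ be the orthogonal projection onto $\psi$ and $Q=I-P$, and solve the auxiliary equation $Q\mathcal{N}(a\psi+w,C)=0$ for $w=W(a,C)$ by the implicit function theorem (here $D_{w}$ at the base point is $L|_{\psi^{\perp}}$, invertible), obtaining $W$ analytic with $W(0,C)=f_{C}^{0}$. There remains the scalar bifurcation equation $g(a,C):=\langle\mathcal{N}(a\psi+W,C),\psi\rangle=\langle(a\psi+W)^{2},\psi\rangle=0$, the linear part and the constant $2C$ dropping out because $\langle Lw,\psi\rangle=0$ and $\langle 1,\psi\rangle=0$. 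Two structural facts drive the rest. First, the reflection $\theta\mapsto-\theta$ carries solutions to solutions and maps $(a,w(\theta))$ to $(-a,w(-\theta))$, so $W(-a,C)(\theta)=W(a,C)(-\theta)$ and $g$ is odd in $a$; writing $g(a,C)=a\,h(a,C)$ with $h$ even in $a$, the factor $a=0$ is the symmetric branch and the new solutions are the zeros of $h$. Second, computing the needed Fourier coefficients (using $L^{-1}(2C)=C\cos^{2}\theta$ and $L^{-1}(\sin^{2}2\theta)=\tfrac18+\tfrac16\cos2\theta+\tfrac1{24}\cos4\theta$, both elementary) gives $h(a,C)=\tfrac{\pi}{2}C-\tfrac{5\pi}{48}a^{2}+O\bigl(|a|^{3}+C^{2}\bigr)$. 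Since $\partial_{C}h(0,0)=\tfrac{\pi}{2}\neq0$, the implicit function theorem solves $h=0$ by $C=\Gamma(a)=\tfrac{5}{24}a^{2}+O(a^{4})$; substituting into the flux, $\phi=\int_{-\pi/2}^{\pi/2}W(a,\Gamma(a))\,\rd\theta=-\tfrac{\pi}{48}a^{2}+O(a^{4})$. Thus the branch exists only for $\phi<0$, with $a^{2}=-\tfrac{48\phi}{\pi}+O(\phi^{2})$, i.e. $a=\pm\sqrt{-48\phi/\pi}\,(1+O(\phi))$, and since $W=O(a^{2})=O(\phi)$ we conclude $f_{\phi}^{\pm1}=\pm\sqrt{-48\phi/\pi}\,\sin(2\theta)+O(\phi)$.

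The main obstacle is the bookkeeping in the reduction: one must check that $C$ and $a^{2}$ enter $h$ — and the flux — at the \emph{same} order, since this is precisely what converts the $C\sim\phi$ scaling of the symmetric branch into the $a\sim\sqrt{-\phi}$ scaling of the antisymmetric ones, and one must get the signs of both coefficients right so that the secondary branch is confined to $\phi<0$. The reflection-symmetry argument yielding the factorisation $g=a\,h$ (hence the exact persistence of the symmetric solution for all small $\phi$, not merely to leading order) is the other delicate point; the remaining steps are routine applications of the implicit function theorem to the polynomial nonlinearity together with the inversion of two one-variable maps with nonvanishing derivative.
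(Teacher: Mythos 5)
Your proof is correct, and the route is genuinely different in its packaging from the one in the paper, even though both rest on the same resonance (the kernel $\sin(2\theta)$ of $\partial_\theta^2+4$ with Dirichlet conditions on an interval of length $\pi$) and ultimately reduce to the same cubic: your reduced system $h=0$, $\phi=\tfrac{\pi}{2}C-\tfrac{\pi}{8}a^2$ collapses to $a\bigl(\phi+\tfrac{\pi}{48}a^2\bigr)=O(a^4)$, which is exactly the paper's bifurcation equation $A\bigl(\phi+\tfrac{\pi}{48}A^2\bigr)=\dots$, and your coefficients $\tfrac{\pi}{2}$, $-\tfrac{5\pi}{48}$, $-\tfrac{\pi}{8}$ all check out. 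The paper works with the flux $\phi$ as the parameter from the outset: it inverts $\partial_\theta^2+4$ by an explicit variation-of-parameters operator $L$, writes $f=A\sin(2\theta)+f_1+\bar f$ with $f_1$ chosen by hand, eliminates $C$ via the flux constraint, and then runs a single contraction-mapping argument for the pair $(\bar A,\bar f)$ in a ball of radius $(O(\phi),O(\phi^{3/2}))$ around each of the three roots of the cubic, so all three branches are produced by one fixed-point lemma. You instead keep $C$ free, do an orthogonal Lyapunov--Schmidt splitting $f=a\psi+w$, use the reflection equivariance to factor the bifurcation function as $g=ah$ with $h$ even, solve $h=0$ by the implicit function theorem, and only then invert the flux map. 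What your version buys is conceptual cleanliness: the symmetric branch is isolated exactly (not just to leading order) by the symmetry factorization, the error for it comes out as $O(\phi^2)$ rather than the stated $O(\phi^{3/2})$, and the $\phi<0$ restriction appears transparently as the sign of $-\tfrac{\pi}{48}a^2$; what the paper's version buys is that the uniform treatment of the three roots and the explicit ball $B_\phi$ make the smallness thresholds and the error bounds completely concrete without invoking the abstract implicit function theorem twice. The only points you should make fully explicit in a written version are the equivariance/uniqueness argument giving $W(-a,C)(\theta)=W(a,C)(-\theta)$ (hence the oddness of $g$ and the analytic factorization $g=ah$) and the inversion of the even map $a\mapsto\phi(a)=-\tfrac{\pi}{48}a^2+O(a^4)$ yielding exactly two preimages for small $\phi<0$; both are routine and you have identified them as the delicate steps.
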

\begin{proof}
First of all the solution of the linear equation
\[
f^{\prime\prime}+4f=g\,,
\]
is given by
\[
f(\theta)=A\sin(2\theta)+B\cos(2\theta)-L[g](\theta)\,,
\]
where
\[
L[g](\theta)=\frac{1}{2}\cos(2\theta)\int_{-\pi/2}^{\theta}g(s)\sin(2s)\,\rd s-\frac{1}{2}\sin(2\theta)\int_{-\pi/2}^{\theta}g(s)\cos(2s)\,\rd s\,.
\]
Therefore the Jeffery-Hamel equation and the boundary condition \eqref{JH&BC},
can be rewritten as
\begin{align}
f(\theta) & =A\sin(2\theta)+C\cos^{2}(\theta)+L[f^{2}](\theta)\,, & \int_{-\pi/2}^{+\pi/2}f^{2}(\theta)\sin(2\theta)\,\rd\theta & =0\,.\label{eq:JH&BC-2}
\end{align}
By defining
\begin{align*}
f_{0}(\theta) & =A\sin(2\theta)\,, & f_{1}(\theta) & =C\cos^{2}(\theta)-\frac{A^{2}}{3}\cos^{4}(\theta)\,, & f & =f_{0}+f_{1}+\bar{f}\,,
\end{align*}
the flux condition
\[
\phi=\int_{-\pi/2}^{+\pi/2}f(\theta)\,\rd\theta
\]
directly gives the definition of $C$ in term of the flux,
\[
C=\frac{2\phi}{\pi}+\frac{A^{2}}{4}-\frac{2}{\pi}\int_{-\pi/2}^{+\pi/2}\bar{f}(\theta)\,\rd\theta\,,
\]
and the two equations \eqref{JH&BC-2} can be rewritten by substitution
as:
\begin{align}
\bar{f} & =L\left[\left(2f_{0}+f_{1}+\bar{f}\right)\left(f_{1}+\bar{f}\right)\right]\,,\label{eq:JH-F-FP}\\
A\left(\phi+\frac{\pi}{48}A^{2}\right) & =\int_{-\pi/2}^{+\pi/2}\left(A-2f_{0}(\theta)-2f_{1}(\theta)-\bar{f}(\theta)\right)\bar{f}(\theta)\,\rd\theta\,.\label{eq:JH-A}
\end{align}
In order to find a fixed point of theses equations, we first solve
the left-hand-side of the second equation
\[
A_{0}\left(\phi+\frac{\pi}{48}A_{0}^{2}\right)=0
\]
for $A_{0}$. This equation admits the solution $A_{0}=0$ and in
addition if $\phi<0$ the two solutions
\[
A_{0}=\pm\sqrt{\frac{-48\phi}{\pi}}\,.
\]
Given one of these three solutions, we define
\[
A=A_{0}+\bar{A}\,,
\]
and \eqref{JH-A} becomes:
\begin{equation}
\bar{A}=\frac{48}{48\phi+3\pi A_{0}^{2}}\left[\int_{-\pi/2}^{+\pi/2}\left(A-2f_{0}(\theta)-2f_{1}(\theta)-\bar{f}(\theta)\right)\bar{f}(\theta)\,\rd\theta-\frac{\pi}{48}\left(3A_{0}+\bar{A}\right)\bar{A}^{2}\right]\,.\label{eq:JH-A-FP}
\end{equation}
It is easily verified that the maps defined by \eqref{JH-A-FP} and
\eqref{JH-F-FP} map the ball 
\[
B_{\phi}=\left\{ \left(\bar{A},\bar{f}\right)\in\mathbb{R}\times C^{0}\big(\big[-\tfrac{\pi}{2},\tfrac{\pi}{2}\big]\big)\colon\left|\bar{A}\right|\leq50\phi\text{ and }\left|\bar{f}\right|\leq600\phi^{3/2}\right\} 
\]
into itself, provided $\phi$ is small enough. Moreover, since the
maps \eqref{JH-A-FP} and \eqref{JH-F-FP} are multilinear affine
maps of $\bar{A}$ and $\bar{f}$, they are contractions from $B_{\phi}$
into itself, for $\phi$ small enough. The first order terms which
we explicitly computed above prove the claimed leading terms of the
symmetric and quasiantisymmetric solutions.
\end{proof}
\newpage{}

\bibliographystyle{merlin-dot}
\phantomsection\addcontentsline{toc}{section}{\refname}\bibliography{paper}

\end{document}